\documentclass[11pt]{amsart}

\usepackage[]{hyperref}
\usepackage{enumitem}
\usepackage[margin=0.8 in]{geometry}
\usepackage{stmaryrd}
\usepackage{url}
\usepackage{abstract}
\usepackage[all]{xy}
\usepackage{amssymb}
\usepackage{amsthm}
\usepackage{amsfonts}
\usepackage{amsmath}
\usepackage{fancyhdr}

\usepackage{pdfpages}

\title[Explicit counting of ideals in number fields of arbitrary degree]{Explicit counting of ideals in number fields\\ of arbitrary degree}
\author{Anton Fehnker}
\date{\today}

\address{Department of Mathematical Sciences, University of Copenhagen,
 Universitetsparken 5, 
2100 Copenhagen, Denmark.}
 \email{af@math.ku.dk}

\numberwithin{equation}{section}

\theoremstyle{plain}
\newtheorem{theorem}{Theorem}[section]
\newtheorem{corollary}[theorem]{Corollary}
\newtheorem*{mainth}{Theorem \ref{T:main}}
\newtheorem{lemma}[theorem]{Lemma}
\newtheorem{proposition}[theorem]{Proposition}

\theoremstyle{definition}
\newtheorem{definition}[theorem]{Definition}
\newtheorem{example}[theorem]{Example}

\theoremstyle{remark}
\newtheorem{remark}[theorem]{Remark}

\DeclareMathAlphabet{\mathpzc}{OT1}{pzc}{m}{it}

\newcommand{\mkw}{\mathbb{M}}
\newcommand{\dmk}{\mathbb{M}^+}
\newcommand{\lsp}{\mathbb{L}}
\newcommand{\F}{\mathcal{F}}
\newcommand{\G}{\mathcal{G}}
\newcommand{\Nl}{\mathcal{N}_{\log}}
\newcommand{\E}{\mathcal{E}}
\newcommand{\R}{\mathbb{R}}
\newcommand{\C}{\mathbb{C}}
\newcommand{\Q}{\mathbb{Q}}
\newcommand{\vol}{\operatorname{Vol}}
\newcommand{\y}{\mathpzc{y}}
\newcommand{\NN}{\mathcal{N}}
\newcommand{\OO}{\mathcal{O}}
\newcommand{\T}{\mathbb{T}}
\newcommand{\Z}{\mathbb{Z}}

\begin{document}
	
	\maketitle
	\begin{abstract}
		We implement methods from the geometry of numbers to give explicit estimates for the number of integral ideals in a number field. We pay particular attention to minimising the effect of the degree $n$ of the number field on the error term and avoid terms on the order of $n^{n^2}$. We do this by studying  fundamental domains for the action of multiplying with units of infinite order in Minkowski space. With some lattice theory we show that one can make different choices for such a fundamental domain, which yield a smaller error, especially when the degree of the field extension is large. We also adapt Schmidt's partition trick to this generalised setting.
	\end{abstract}

	\section{Introduction}

    Let $K$ be a number field of degree $n$ and with ring of integers $\mathcal{O}_K$. We let $\sigma_1, \dots , \sigma_n : K \to \C$ be the embeddings of $K$, ordered, such that $\sigma_1,\dots,\sigma_{r_1}$ are the real embeddings $K \hookrightarrow \R$, and $\sigma_{r_1+1},\dots,\sigma_{n}$  are the complex embeddings $K  \hookrightarrow \C$ with $\sigma_{r_1+i}=\overline{\sigma_{r_1+r_2+i}}$ for $i=1,\dots,r_2$. We note that $r_1$ and $r_2$ satisfy $r_1+2r_2=n$, and we let $r=r_1+r_2-1$.
	We denote the absolute value of the discriminant of $\OO_K$ by $\Delta_K$, the regulator by $R_K$, the number of roots of unity in $K$ by $\omega_K$, the class group of $\OO_K$ by $\operatorname{Cl}(\OO_K)$, and the class number by $h_K$.

	The problem of estimating the number of ideals of $\OO_K$, whose norm is bounded by some parameter $t$, is closely connected to properties of the Dedekind zeta function $\zeta_K$.
    
    Good estimates for the ideal counting function,
    $$\sigma_K(t)=\# \{   \mathfrak{b} \subseteq \mathcal{O}_K \text{ integral ideal} \mid 0< N(\mathfrak{b})\leq t\}, $$
    have been used in many applications, such as  the prime ideal theorem \cite{shapiro} and more explicit versions of the Chebotarev density theorem \cite{Korneel}\footnote{An effective version of the Chebotarev density theorem was first proved by Lagarias and Odlyzko \cite{EffCheb} with all constants made explicit by Winckler \cite{winckler_theoreme_2013}.}.

    From the analytic class number formula, we know that
	$$
	\lim_{t\to \infty} \frac{\sigma_K(t)}{t} = \kappa_K,
	$$
    where $\kappa_K=\frac{2^{r_1+r_2} \pi^{r_2} h_K R_K}{\omega_K \sqrt{\Delta_K}}$ is the residue of $\zeta_K(s)$ at $s=1$.
    In this paper we will use techniques from the geometry of numbers to explicitly bound the error term of this estimation, i.e., the quantity $\vert \sigma_K(t) - t \kappa_K \vert$, in terms of $t$ and invariants of the field $K$.
    
    There are well established estimates due to Weber \cite{Weber}, 
    $$
    \vert \sigma_K(t) -  t \kappa_K \vert \ll t^{\frac{n-1}{n}},
    $$
    and Landau \cite{Landau},
    $$
    \vert \sigma_K(t) -  t \kappa_K \vert \ll t^{\frac{n-1}{n+1}},
    $$
    with implied constants depending on $K$.
    
    In this article, we will show an explicit version of Weber's estimate, building on work of Debaene \cite{Korneel}. 
	
	For a class $\mathcal{C}$ in the ideal class group $\operatorname{Cl} (\OO_K)$, we define
	$$\sigma_K(t,\mathcal{C})=\# \{ \mathfrak{b} \in \mathcal{C}  \mid 0<N(\mathfrak{b})\leq t\}. $$
	Our main statement, which we prove in Section \ref{S:main}, is the following.
	
	\begin{theorem}\label{T:main}
		For $t\geq \left(10 n^2 \sqrt{\Delta_K} \right)^n$, we have
        \begin{alignat}{3}
            \vert \sigma_K(t,\mathcal{C}) - t \, h_K^{-1}\kappa_K  \vert &\leq  t^{\frac{n-1}{n}}\,E_1(n)\, \omega_K^{-1} R_K , \tag{A1} \label{mA1} \\
            \vert \sigma_K(t,\mathcal{C}) - t \, h_K^{-1}\kappa_K  \vert &\leq  t^{\frac{n-1}{n}}\,  E_2(n) \,  \omega_K^{-1}   R_K^{1/n}  \log^+\left(\left( 15n\, 2^{\frac{r}{2}} \right)^n R_K \right)^{\frac{(n-1)^2}{n}}  ,  \tag{A2} \label{mA2}  \\
			\vert \sigma_K(t) - t \, \kappa_K  \vert 
            &\leq t^{\frac{n-1}{n}}\, 6nE_1(n)\, \omega_K^{-1}   R_K \, h_K^{1/n}   \log^+(h_K)^{\frac{(n-1)^2}{n}}, \tag{B1} \label{mB1}  \\
			\vert \sigma_K(t) - t \, \kappa_K  \vert 
            &\leq t^{\frac{n-1}{n}}\, E_2(n)  \, \omega_K^{-1}   (R_K h_K)^{1/n}  \log^+\left(\left( 15n\, 2^{\frac{r}{2}} \right)^n R_K h_K \right)^{\frac{(n-1)^2}{n}} . \tag{B2} \label{mB2}
        \end{alignat}
	Here, $\log^+(x) = \max(1,\log(x))$, and we can take 
    $$E_1(n)=\frac{45e}{2} n^{7/2} 2^{4.1  n} \quad \text{ and } \quad E_2(n)=2025e^2  n^{11/2} 2^{4n} \left(n-1  \right)^{\frac{n-1}{2}}.$$
	\end{theorem}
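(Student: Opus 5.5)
We follow the classical reduction of ideal counting to lattice point counting. Fix a class $\mathcal{C}$ and pick an integral ideal $\mathfrak{a}$ in the inverse class $\mathcal{C}^{-1}$; by Minkowski's bound we may take $N(\mathfrak{a})\le (2/\pi)^{r_2}\sqrt{\Delta_K}$, or, if needed, choose $\mathfrak{a}$ with controlled successive minima. Then $\mathfrak{b}\mapsto \mathfrak{a}\mathfrak{b}=(\alpha)$ is a bijection between ideals $\mathfrak{b}\in\mathcal{C}$ with $N(\mathfrak{b})\le t$ and nonzero principal ideals $(\alpha)\subseteq \mathfrak{a}$ with $|N(\alpha)|\le tN(\mathfrak{a})$. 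These in turn correspond, up to the factor $\omega_K$, to points of the lattice $\Lambda=\sigma(\mathfrak{a})\subset \R^{r_1}\times\C^{r_2}\cong\R^n$ (covolume $2^{-r_2}N(\mathfrak{a})\sqrt{\Delta_K}$) lying in a fundamental domain $F$ for the action of the free part of the units, intersected with the norm region $\{|N(x)|\le tN(\mathfrak{a})\}$. By homogeneity this region is $(tN(\mathfrak{a}))^{1/n}F_1$, where $F_1$ has volume $2^{r_1}\pi^{r_2}R_K$. The main term $t\,h_K^{-1}\kappa_K$ comes out exactly, and the error is controlled by a Davenport/Lipschitz-type estimate of the form $\#(\Lambda\cap sF_1)-\vol(sF_1)/\det\Lambda = O\bigl(\sum_{j<n} s^j V_j\bigr)$, where the $V_j$ bound projections of the boundary. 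Equivalently, one uses a covering of $\partial F_1$ by Lipschitz images of $[0,1]^{n-1}$ with constant $L$, which gives an error $\ll M\,(L s/\lambda_1)^{n-1}$, with $\lambda_1\ge 1$ the first successive minimum of $\Lambda$ up to normalisation.

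The key new ingredient is the choice of $F_1$. For (A1) we take the standard domain: the preimage under the log map of a fundamental parallelotope of the unit lattice, spanned by a basis of fundamental units. We then cut it into $\asymp R_K$ pieces of bounded log-diameter, each with Lipschitz boundary of constant polynomial in $n$ times $2^{O(n)}$. Summing the errors gives a bound linear in $R_K$, which is (A1). For (A2) we instead use a reduced (LLL or Korkine–Zolotarev) basis of the unit lattice, which has covolume $\sqrt{r+1}\,R_K$. Schmidt's partition trick then splits the domain into pieces according to the size of the log coordinates. Since every unit $\ne$ torsion has log-length at least of order $\log n/n^2$ (Dobrowolski-type bounds), a reduced basis has vectors of controlled length. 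Optimising the partition scale to balance volume against boundary yields the saving $R_K\to R_K^{1/n}$, at the cost of the factor $\log^+(\cdots R_K)^{(n-1)^2/n}$ and $(n-1)^{(n-1)/2}$ from the basis reduction. The hypothesis $t\ge(10n^2\sqrt{\Delta_K})^n$ ensures that $s=(tN(\mathfrak{a}))^{1/n}$ dominates the lattice minima, so lower-order terms $s^j$ with $j<n-1$ are absorbed into the $s^{n-1}$ term, and the factor $N(\mathfrak{a})^{(n-1)/n}/\det\Lambda$ cancels to a constant.

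For (B1) and (B2) we sum over classes. Summing (A2) naively gives $h_K R_K^{1/n}$. Instead we redo the partition argument simultaneously over all classes, which amounts to treating a fundamental domain of total volume $\propto h_K R_K$; this yields $(R_Kh_K)^{1/n}$ in (B2). For (B1) we apply (A1)-style pieces but partition in the class-group direction, giving $h_K^{1/n}\log^+(h_K)^{(n-1)^2/n}$ times $6n E_1(n)R_K$.

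The main obstacle is the explicit Lipschitz parametrisation of the boundary of the reduced fundamental domain pieces with constants only $2^{O(n)}\mathrm{poly}(n)$. The standard choice incurs $n^{n^2}$-type constants from the exponential map on a long, skewed parallelotope. We would first try to bound the Lipschitz constant of $x\mapsto \exp$ on a log-box of side $O(1)$, and to control the geometry of the parallelotope through Gram–Schmidt norms of the reduced basis, accepting the $(n-1)^{(n-1)/2}$ loss.
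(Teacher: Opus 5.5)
Your outline has the right skeleton (reduce to counting points of $\phi(\mathfrak{a})$ in a fundamental domain for the units, partition, Lipschitz boundary, sum over classes). However, the steps that actually produce the stated bounds are either missing or rest on mechanisms that would not work.

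The most serious gap is where the savings $R_K\to R_K^{1/n}$ in (A2) and $h_K\to h_K^{1/n}\log^+(h_K)^{(n-1)^2/n}$ in (B1), (B2) come from. They cannot come from ``optimising the partition scale'' or ``partitioning in the class-group direction''. Every piece carries an error of the same shape, so such bookkeeping stays linear in $R_K$ and $h_K$. Once you replace $\lambda_1$ by a uniform lower bound, the needed information is gone.

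In the paper, (A1) and (A2) use the same domain and the same partition. Schmidt's trick (Lemma~\ref{L:Schmidt}) moves every piece onto one small cube $D_\delta$, at the price of counting in $|\mathcal{S}|$ twisted lattices $\Psi_{\beta_s}A_t$. The total error is then proportional to $\sum_{s\in\mathcal{S}}\lambda_1(\Psi_{\beta_s}A_t)^{1-n}$, and the two estimates differ only in how this sum is bounded.

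For (A1), AM--GM (Lemma~\ref{L:MinBound}) bounds each term by $(2/n)^{(n-1)/2}N(\mathfrak{b}_\mathcal{C})^{(1-n)/n}t^{n-1}$, with $\mathfrak{b}_\mathcal{C}$ an integral ideal in $\mathcal{C}$. Keeping this weight and summing over the $h_K$ distinct ideals $\mathfrak{b}_\mathcal{C}$ with Debaene's bound $\sum_{j\le k}N(\mathfrak{b}_j)^{(1-n)/n}\le 6nk^{1/n}\log^+(k)^{(n-1)^2/n}$ (Lemma~\ref{L:SumIdeals}) gives (B1).

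For (A2), Debaene's integral argument (Lemma~\ref{L:MinSumBound}) groups the minimal vectors of the twisted lattices into associate classes. It bounds the whole sum by $\frac{|\mathcal{S}|}{R_K}(\frac{n}{n-1})^r\sum_{j\le k}N(\mathfrak{b}_j)^{(1-n)/n}$ over $k\le|\mathcal{S}|$ distinct ideals in $\mathcal{C}$. Lemma~\ref{L:SumIdeals} then gives $|\mathcal{S}|^{1/n}\log^+(|\mathcal{S}|)^{(n-1)^2/n}$, and $|\mathcal{S}|\le(15n2^{r/2})^nR_K$ is the source of both $R_K^{1/n}$ and the logarithm. (B2) is the same argument applied once to at most $|\mathcal{S}|h_K$ distinct ideals across all classes.

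The second gap is that the theorem is about explicit constants, and your sketch hides them exactly where $n^{n^2}$-type terms arise.

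\emph{The counting estimate.} An error of the form $\ll M(Ls/\lambda_1)^{n-1}$ obtained via Davenport/Widmer carries a factor like $n^{3n^2/2}$ from the orthogonality defect. The paper instead uses Voronoi cells and the covering radius (Lemma~\ref{L:LatMantCounting}, Proposition~\ref{P:LatLipCounting}), valid only when $r(\Lambda)\le\sqrt{n-1}L$. It combines this with $r(\Lambda)\le\frac{2^{n-1}\sqrt n}{\vol(B(1))}\vol(\Lambda)\lambda_1(\Lambda)^{1-n}$, which follows from Minkowski's second theorem (Lemma~\ref{L:RadMin}). The hypothesis $t\ge(10n^2\sqrt{\Delta_K})^n$ is what guarantees $r(\Psi_{\beta_s}A_t)\le\sqrt{n-1}L$; it is not about absorbing lower-order terms. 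The factor $(n-1)^{(n-1)/2}$ in $E_2$ comes from $(\sqrt{n-1}L)^{n-1}$, not from basis reduction.

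\emph{The fundamental domain.} Take a parallelotope spanned by fundamental units (even LLL-reduced ones) and cut it into pieces of side about $1/c$. This needs $\prod_i\lceil c\|l_i\|\rceil$ pieces, each costing as much boundary as a cube. The total error therefore exceeds the cube count $c^r\sqrt{r_1+r_2}R_K$ by the orthogonality defect $\prod_i\|l_i\|/\vol(\Gamma)$, for which only superexponential bounds are available.

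The missing idea is that any fundamental domain of $\Gamma$ may be used (Corollary~\ref{C:fundDoms}). In particular one can take the orthogonal box spanned by the Gram--Schmidt vectors $e_i$ of an LLL basis, which has volume exactly $\vol(\Gamma)$. It splits into $\prod\lceil c\|e_i\|\rceil$ cubes, and the rounding loss is controlled by LLL together with Dimitrov's bound, giving $\|e_i\|\ge 2^{(1-i)/2}\log 2/(4n)$ (Lemma~\ref{L:SBound}).

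\emph{The boundary parametrisation.} This is the step you flag as the main obstacle. The paper handles it by bounding $\|Dg\|_2\le e^{\sqrt r/(2c)}$ on the cube, and by gluing the geometrically shrinking shells $\F(w^{k+1},w^k)$ into one Lipschitz map (Lemmas~\ref{L:gLip}--\ref{L:mapPhi}).

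With these pieces, the powers of $c$ in $ML^{n-1}|\mathcal{S}|$ cancel down to $c\exp(a/c)$ with $a=\frac12(\sqrt r(n-1)+28n2^{r/2})$. Choosing $c\approx15n2^{r/2}$ with $2\pi c\in\mathbb{N}$ then yields $E_1(n)$ and $E_2(n)$.
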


    There are also explicit versions of Landau's estimate due to Sunley \cite{Sunley} and Lee \cite{lee_number_2023}, using methods from complex analysis. 
    In Section \ref{S:discuss} we will see that even though Landau's estimate is better asymptotically, our method can yield better results for smaller values of $t$, such as $t= \left(10 n^2 \sqrt{\Delta_K} \right)^n$ (see, in particular, Example \ref{Ex:zeta}).

    We will assume throughout that $n\geq2$, since the case $n=1$ with $\sigma_\Q(t)=\lfloor t \rfloor$ is trivial.
    
	A lot of the methods, we use, are similar to those Debaene implemented to obtain the following result.
	
	\begin{theorem}[{\cite[Corollary~2]{Korneel}}]\label{T:Debaene}
		For $t\geq 1$, we have
		\begin{alignat}{2}
			\vert \sigma_K(t,\mathcal{C}) - t \, h_K^{-1}\kappa_K  \vert &\leq  t^{\frac{n-1}{n}}\, n^{10n^2}  R_K^{1/n} \left( 1 + \log R_K \right)^{\frac{(n-1)^2}{n}}, \tag{A} \\
			\vert \sigma_K(t) - t \, \kappa_K  \vert &\leq t^{\frac{n-1}{n}}\, n^{10n^2}  \left( R_K h_K \right)^{1/n} \left( 1 + \log (R_K h_K) \right)^{\frac{(n-1)^2}{n}}. \tag{B}
		\end{alignat}	
	\end{theorem}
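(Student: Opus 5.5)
The plan follows the geometry-of-numbers approach: reduce the count of ideals in a class to a lattice-point count in a homogeneously expanding region of Minkowski space, and then bound the error with a Lipschitz-principle estimate. Fix $\mathcal{C}$ and an integral ideal $\mathfrak{a}\in\mathcal{C}^{-1}$. The map $\mathfrak{b}\mapsto \mathfrak{a}\mathfrak{b}=(\alpha)$ gives a bijection between integral ideals $\mathfrak{b}\in\mathcal{C}$ with $N(\mathfrak{b})\le t$ and nonzero $\alpha\in\mathfrak{a}$, taken modulo $\OO_K^\times$, with $|N(\alpha)|\le tN(\mathfrak{a})$. Embed $\mathfrak{a}$ as a lattice $\Lambda\subset\R^{r_1}\times\C^{r_2}\cong\R^n$ with $\det\Lambda=2^{-r_2}N(\mathfrak{a})\sqrt{\Delta_K}$. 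Choose a fundamental domain $\F$ for the action of the free part of the unit group on $\{x:0<|N(x)|\le 1\}$, cut out by the logarithmic map and a basis $\varepsilon_1,\dots,\varepsilon_r$ of units. Then
$$\sigma_K(t,\mathcal{C})=\omega_K^{-1}\,\#\bigl(\Lambda\cap (tN(\mathfrak{a}))^{1/n}\F\bigr),$$
and $\vol(\F)/\det\Lambda$ reproduces $h_K^{-1}\kappa_K$ after scaling.

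For the error I will use the Davenport/Widmer form of the Lipschitz principle. If $\partial\F$ is covered by the images of $M$ maps $[0,1]^{n-1}\to\R^n$ with Lipschitz constant $L$, then
$$\Bigl|\#(\Lambda\cap T\F)-\frac{\vol(T\F)}{\det\Lambda}\Bigr|\le M\,c(n)\,\max_{0\le i<n}\frac{(TL)^i}{\lambda_1\cdots\lambda_i},$$
where $\lambda_i$ are the successive minima of $\Lambda$. By AM--GM every nonzero $\alpha\in\mathfrak{a}$ has $\|\sigma(\alpha)\|\ge\sqrt n\,N(\mathfrak{a})^{1/n}$, so $\lambda_1\ge \sqrt n N(\mathfrak{a})^{1/n}$. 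With $T=(tN(\mathfrak a))^{1/n}$, the $N(\mathfrak a)$ factors then cancel in the dominant term $i=n-1$, leaving $t^{(n-1)/n}ML^{n-1}c(n)$.

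The main obstacle is bounding $M L^{n-1}$ by $n^{O(n^2)}R_K^{1/n}(1+\log R_K)^{(n-1)^2/n}$. Parametrizing $\F$ directly by a unit basis gives Lipschitz constants of order $\exp(\max_j\|\log\varepsilon_j\|)$, which can be exponentially large in $R_K$. I will handle this in three steps.
\begin{enumerate}
\item Take a Minkowski- or LLL-reduced basis of the unit lattice, so that $\prod_j\|\log\varepsilon_j\|\le n^{O(n)}R_K$, using the lower bound on lengths of logarithms of non-torsion units.
\item Apply Schmidt's partition trick: subdivide the parallelotope in log-space into roughly $R_K/\ell^{\,r}$ boxes of side length about $\ell$.
\item On each box, the exponential map has Lipschitz constant $e^{O(n\ell)}$. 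Translate each box by a unit so that it sits near the origin, which does not change its lattice count, since $\Lambda$ is unit-invariant.
\end{enumerate}
This gives $M\approx n^{O(n^2)}R_K/\ell^{\,r}$ and $L\approx e^{O(n\ell)}$. Choosing $\ell\approx\frac1n\log R_K$ and balancing the pieces against the trivial bound should produce the factor $R_K^{1/n}(1+\log R_K)^{(n-1)^2/n}$. Carrying all the constants through gives $n^{10n^2}$ for $t\ge1$; for small $t$ the bound is immediate, because the right-hand side already dominates.

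For (B), summing (A) over the $h_K$ classes only gives $h_K R_K^{1/n}$, which is too weak. Instead I will redo the partition globally: treat all classes together, and choose the partition parameter $\ell$ depending on the total volume $R_Kh_K$ rather than on $R_K$ alone. The number of pieces then scales like $h_KR_K/\ell^{\,r}$, and the same optimisation gives $(R_Kh_K)^{1/n}(1+\log(R_Kh_K))^{(n-1)^2/n}$.
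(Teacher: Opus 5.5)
The argument has a genuine gap. Your reduction to a lattice-point count, the use of Widmer's theorem, and the AM--GM lower bound for $\lambda_1$ follow Debaene's route. The paper only cites this result, but it reproduces Debaene's key steps in Lemmas~\ref{L:Schmidt}, \ref{L:MinSumBound} and \ref{L:SumIdeals}. The proposal fails at step~3 and in where it says the factor $R_K^{1/n}(1+\log R_K)^{(n-1)^2/n}$ comes from.

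\textbf{Step 3 is false.} The boxes partition a fundamental domain of the unit lattice $\Gamma$, so distinct boxes are incongruent modulo $\Gamma$. Multiplying by a unit only translates the logarithmic picture by a vector of $\Gamma$. So a box at log-distance $d$ from $\Gamma$ can never be brought closer than $d$ to the origin. Here $d$ ranges up to the covering radius of $\Gamma$, which is at least of order $R_K^{1/r}$. On such a box the exponential map has a Lipschitz constant exponentially large in $d$, not $e^{O(n\ell)}$.

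Schmidt's trick instead moves $\tau_{y_s}\tilde D$ to $\tilde D$ by multiplying with some $\beta_s\in\mkw$ whose logarithmic image is $-y_s\in H$, which in general is not in $\Gamma$. This replaces $\Lambda$ by a different lattice $\Psi_{\beta_s}\Lambda$ for each piece (Lemma~\ref{L:Schmidt}). Since $\NN(\beta_s)=1$, AM--GM still gives a uniform bound $\lambda_1(\Psi_{\beta_s}\Lambda)\ge\sqrt{n/2}\,N(\mathfrak a)^{1/n}$ (Lemma~\ref{L:MinBound}). But applying this uniform bound to each of the roughly $R_K$ pieces gives an error linear in $R_K$; this is the paper's (A1).

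No choice of $\ell$ rescues this. With your $M\approx R_K\ell^{-r}$ and $L\approx e^{O(n\ell)}$, you get $ML^{n-1}\approx R_K\ell^{-r}e^{O(n^2\ell)}$. This never beats linear dependence on $R_K$, and for $\ell\approx\frac1n\log R_K$ it is as large as $R_K^{1+O(n)}$.

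\textbf{The missing idea} is that the minima vary with $s$, and one must average them. Keep the boxes of bounded size, so that $|\mathcal S|=n^{O(n)}R_K$ and each piece contributes $ML^{n-1}=n^{O(n^2)}$. Then bound $\sum_{s\in\mathcal S}\lambda_1(\Psi_{\beta_s}\Lambda)^{1-n}$ collectively:
\begin{itemize}
\item for each $s$, pick $\alpha_s\in\mathfrak a$ realising the minimum;
\item group the $\alpha_s$ into associate classes;
\item convert the sum over $s\in\mathcal S$ and over units into an integral over $\R^r$ (Debaene's Lemma~10).
\end{itemize}
Up to factors $n^{O(n)}$ and the normalisation in $N(\mathfrak a)$ and $t$, this gives $\frac{|\mathcal S|}{R_K}\sum_{j\le k}N(\mathfrak b_j)^{(1-n)/n}$. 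Here the $\mathfrak b_j\in\mathcal C$ are distinct integral ideals and $k\le|\mathcal S|$ (Lemma~\ref{L:MinSumBound}).

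The estimate $\sum_{j\le k}N(\mathfrak b_j)^{(1-n)/n}\le 6nk^{1/n}\log^+(k)^{(n-1)^2/n}$ (Lemma~\ref{L:SumIdeals}) rests on counting ideals of bounded norm. That is exactly where $R_K^{1/n}(1+\log R_K)^{(n-1)^2/n}$ comes from.

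The same mechanism gives (B). The ideals $\mathfrak b_j$ produced by the different classes are pairwise distinct, so Lemma~\ref{L:SumIdeals} is applied once to at most $h_K|\mathcal S|$ ideals. No re-optimisation of a partition parameter against $R_Kh_K$ is involved.
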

	
	In terms of the regulator $R_K$ and the class number $h_K$, the result of Debaene is very similar. However, the term $n^{10n^2}$ makes the estimate less precise for fields of larger degree.
    In terms of the degree, our results \ref{mA1} and \ref{mB1} only grow exponential in $n$ with $2^{4.1n}$ being the dominant term. Of course, the error depends linearly on the regulator in these cases.

    Our inequalities \ref{mA2} and \ref{mB2} have the same dependence on $R_K$ and $h_K$ as Theorem \ref{T:Debaene}. In terms of the degree, these estimates grow roughly like $n^{2.5n}$. Here $n^{0.5n}$ comes from $E_2(n)$, and from the $\log$-term we get another factor on the order of $\log(2^{n^2})^n=n^{2n}\log(2)^n$.

	We now give an overview of the different methods used in this paper.
	Just as Debaene, we use a method generally known as geometry of numbers, i.e., we apply techniques from lattice theory to a problem in number theory.
	
	One key ingredient, needed to get an explicit counting result, is a general method for counting lattice points in a given subset of $\R^m$. Given a lattice $\Lambda \subseteq \R^m$ and a measurable subset $X \subseteq \R^m$, the goal is to estimate the quantity $\vert \Lambda \cap X \vert$. A natural guess for this quantity is $\frac{\vol(X)}{\vol(\Lambda)}$, and there are several possibilities for bounding the error of this estimation.
	
	Debaene uses a theorem by Widmer \cite{Widmer}, where the error term includes a factor $n^{\frac{3n^2}{2}}$, if no bound for the orthogonality defect of the lattice $\Lambda$ is known. Since this is exactly the kind of term we want to avoid in this paper, and at the same time we are not aware of any strong bounds for the orthogonality defect of ideal lattices, we use a different method for estimating $\vert \Lambda \cap X \vert$.
	This is Proposition \ref{P:LatLipCounting}. We are able to avoid a term on the order of $n^{n^2}$ by focusing on a different invariant of the lattice $\Lambda$. While Widmer's result focuses on the successive minima of the lattice, our method is based around the covering radius. The downside to this is that we need the covering radius to be ``small'' in comparison to the size of $X$ (more specifically, we have the condition $r(\Lambda) \leq \sqrt{n-1} L$ in Proposition \ref{P:LatLipCounting}). This is the reason why our main result, Theorem \ref{T:main}, only holds for larger values of $t$.
 	
 	After this general lattice theory, we continue in Section \ref{S:geoNum} by taking a closer look at our specific setting. The first important steps here is the realisation that we have some degrees of freedom in choosing the fundamental domain for the action of multiplying with algebraic units in Minkowski space (Corollary \ref{C:fundDoms}). In particular, the possibility to choose any fundamental domain $D$ of the unit lattice to define $\F(D,I,v)$ is instrumental in reducing the dependency on the degree $n$.
 	
 	The second important step is the generalisation of Schmidt's partition trick in Lemma \ref{L:Schmidt}. This gives us even more freedom in choosing a nice domain to analyse. We show that we can partition the domain $\F(D,I,v)$ into smaller parts, and that it actually suffices to only consider one of these parts, which we denote $\F ( D_{\delta}, \left( 0,1\right], \nu )$. In return, we  have to analyse the intersection of this domain with a family of lattices $\Psi_{y_s}A_t$. This culminates in Proposition \ref{P:SigLat}.

    To apply Proposition \ref{P:LatLipCounting} and get our main result, we need two ingredients. Firstly, we need to show that $\F ( D_{\delta}, \left( 0,1\right], \nu )$ is of a certain Lipschitz class. This is the content of Section \ref{S:Lipschitz}.
    Secondly, we need a good bound for the covering radius of the lattices $\Psi_{\beta_s} A_t$, which is what we work out in Section \ref{S:Minima}.
 	Finally, we can combine everything into our main statement in Section \ref{S:main}, followed by a short discussion in Section \ref{S:discuss}.

	\section*{Acknowledgements}
    I want to thank Preda Mih\u{a}ilescu for introducing me to this topic, while I was studying at the University of Göttingen. He gave me insight into an unpublished working paper \cite{Preda}, in which he ponders the idea of considering different domains in Minkowski space to reduce or avoid orthogonality defects in the estimation of the ideal counting function.
	I am very grateful to have gotten this insight and the opportunity to develop these ideas further.

    I also want to thank Ian Kiming, Morten Risager, and Fabien Pazuki from the University of Copenhagen for supporting me in the later developments of this project, patiently listening to my ideas and giving constructive feedback.

	\section{Lattices}

    We start by discussing some general theory of lattices in a real vector space $\R^m$. We are especially interested in studying the intersection of a lattice $\Lambda\subseteq \R^m$ and other subsets $X\subseteq\R^m$. We use $\partial X$ to denote the boundary of $X$ and $\overline{X}$ for the topological closure $X\cup \partial X$. We write $B(r)$ to denote the closed ball of radius $r$ in $\R^m$, i.e.,
    $
    B(r)=\{x\in \R^m \mid \|x\|\leq r\}.
    $
    
	\begin{definition}
		Let $v_1,\dots, v_k \in \R^m$ be linearly independent vectors. Then 
		$$
		\Lambda = \mathbb{Z} v_1 + \dots + \mathbb{Z} v_k
		$$
		is a lattice of rank $k$ with basis $v_1,\dots v_k$. A lattice is called full, if $k=m$.
	\end{definition}
	
	\begin{definition}
		Let $\Lambda \subseteq \R^m$ be a lattice and let $V= \operatorname{Span}_\R \Lambda$.
		A fundamental domain of $\Lambda$ is a measurable subset $D\subseteq V$, which is a system of representatives for $V/\Lambda$, i.e., for every $v\in V$, there is a unique decomposition of the form $v=l+d$ with $l\in \Lambda$ and $d\in D$.
	\end{definition}
	
	\begin{example}
		Let $\Lambda \subseteq \R^m$ be a lattice with basis $v_1,\dots v_k$. We now describe some important fundamental domains.
		\begin{enumerate}
			\item The fundamental parallelepiped $$P=\left\{ \sum_{i=1}^k \mu_i v_i \biggm\vert \mu_i \in \left[0,1\right) \right\}$$ is a fundamental domain of $\Lambda$.
			\item Let $v_1^\ast, \dots, v_k^\ast$ be the Gram-Schmidt orthogonalised basis resulting from $v_1,\dots v_k$, i.e.,
			$$
			v_i^\ast = v_i - \sum_{j=1}^{i-1} \frac{\langle v_i, v_j^\ast \rangle}{\langle v_j^\ast, v_j^\ast \rangle} v_j^\ast.
			$$
			Then $$\left\{\sum_{i=1}^k \mu_i v_i^\ast \biggm\vert \mu_i \in \left[0,1\right)\right\}$$ is a fundamental domain of $\Lambda$.
			\item For any $v\in V=\operatorname{Span}_\R \Lambda$ and any fundamental domain  $D$ of $\Lambda$, the translated domain $v+D$ is also a fundamental domain of $\Lambda$. In particular, this means that the previous two fundamental domains can be centralised by taking $\mu_i \in \left[-\frac{1}{2},\frac{1}{2}\right)$.
		\end{enumerate}
	\end{example}
	
	All fundamental domains $D \subseteq V$ of a lattice $\Lambda$ have the same volume, the measure of the compact quotient space $V/\Lambda$. We denote this quantity by $\vol(\Lambda)$.

	\begin{definition}[Successive minima]
		Let $\Lambda$ be a lattice. The numbers
		$$
		\lambda_k (\Lambda):= \inf \{r \mid \text{there are } k \text{ linearly independent lattice points } l_1,\dots,l_k \in \Lambda \cap B(r) \}
		$$
		are called the successive minima of $\Lambda$.
	\end{definition}
	
	\begin{theorem}[Minkowski's second theorem {\cite{ CasselsGeoNum,Minkowski}}]\label{T:Mink}
		Let $\Lambda \subseteq \R^m$ be a full lattice with successive minima $\lambda_1,\dots,\lambda_m$. Then
		$$
		\frac{2^m}{m!} \vol(\Lambda) \leq \lambda_1 \dots \lambda_m \vol(B(1)) \leq 2^m \vol(\Lambda).
		$$ 
	\end{theorem}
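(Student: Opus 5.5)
This is the classical theorem of Minkowski, cited from \cite{CasselsGeoNum,Minkowski}. Here is how I would prove it directly, treating the two inequalities separately. Throughout, fix linearly independent lattice vectors $l_1,\dots,l_m\in\Lambda$ with $\|l_i\|=\lambda_i$. Such vectors exist because $\Lambda$ is discrete, so the infimum in the definition of $\lambda_k$ is attained, and they can be chosen greedily.

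\emph{Lower bound.} Let $\Lambda'=\Z l_1+\dots+\Z l_m$. This is a full sublattice of $\Lambda$, so $\vol(\Lambda)\le \vol(\Lambda')=|\det(l_1,\dots,l_m)|$. Hadamard's inequality gives $|\det(l_1,\dots,l_m)|\le\prod\|l_i\|=\lambda_1\cdots\lambda_m$. The unit ball contains the cross-polytope $\operatorname{conv}(\pm e_1,\dots,\pm e_m)$, which has volume $2^m/m!$. Hence $\vol(B(1))\ge 2^m/m!$, and multiplying the two estimates gives $\frac{2^m}{m!}\vol(\Lambda)\le \lambda_1\cdots\lambda_m\vol(B(1))$.

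\emph{Upper bound.} This is the substantive part. First, apply Gram--Schmidt to $l_1,\dots,l_m$ and normalise the resulting vectors to get an orthonormal basis $e_1,\dots,e_m$. It has the property that $\operatorname{Span}(e_1,\dots,e_k)=\operatorname{Span}(l_1,\dots,l_k)$ for every $k$. In these coordinates define the open ellipsoid
$$E=\Bigl\{x=\sum_i x_ie_i \Bigm\vert \sum_i x_i^2/\lambda_i^2<1\Bigr\}.$$
It is convex, symmetric about the origin, and has volume $\lambda_1\cdots\lambda_m\vol(B(1))$.

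Next, I claim $E$ contains no nonzero lattice point. Take $0\ne v\in\Lambda$ and let $k$ be the largest index with $x_k\neq 0$. Then $v\in\operatorname{Span}(l_1,\dots,l_k)\setminus\operatorname{Span}(l_1,\dots,l_{k-1})$. I then need $\|v\|\ge\lambda_k$. If instead $\|v\|<\lambda_k$, then $v$ would be strictly shorter than $\lambda_k$ and independent of $l_1,\dots,l_{k-1}$. This contradicts the greedy choice of the $l_i$, or equivalently the definition of $\lambda_k$ once ties are handled carefully. Granting $\|v\|\ge\lambda_k$, and using $\lambda_i\le\lambda_k$ for $i\le k$,
$$\sum_{i\le k}\frac{x_i^2}{\lambda_i^2}\ \ge\ \frac{1}{\lambda_k^2}\sum_{i\le k}x_i^2=\frac{\|v\|^2}{\lambda_k^2}\ \ge\ 1,$$
so $v\notin E$.

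Finally, apply Minkowski's first theorem to $E$. I would prove it via Blichfeldt's argument. If $\vol(\tfrac12E)>\vol(\Lambda)$, the translates $\tfrac12E+l$ for $l\in\Lambda$ cannot be pairwise disjoint, so two points of $\tfrac12E$ differ by a nonzero lattice vector. By symmetry and convexity of $E$ that difference lies in $E$, a contradiction. Therefore $2^{-m}\lambda_1\cdots\lambda_m\vol(B(1))\le\vol(\Lambda)$, which is the upper bound.

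\emph{Main obstacle.} I expect the step needing most care to be the claim $\|v\|\ge\lambda_k$ when equal minima occur. It should be handled by choosing the $l_i$ greedily and arguing with strict inequalities inside the open ellipsoid $E$.
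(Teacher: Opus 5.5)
Your proof is correct. It takes a different route from the one the paper relies on: the paper gives no proof and cites Cassels and Minkowski, where the theorem is proved with an arbitrary symmetric convex body $K$ in place of $B(1)$.

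In that general setting the lower bound is essentially your argument. The points $\pm l_i/\lambda_i$ lie in $K$, hence so does their convex hull, whose volume is at least $\frac{2^m}{m!}\vol(\Lambda)/(\lambda_1\cdots\lambda_m)$. The upper bound, however, needs a considerably more delicate argument. Your key step $\sum_{i\le k}x_i^2/\lambda_i^2\ge\|v\|^2/\lambda_k^2$ uses that the Euclidean norm is monotone in the absolute values of the coordinates with respect to every orthonormal basis, and this fails for a general norm.

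By exploiting this property, your Gram--Schmidt ellipsoid reduces the upper bound to Minkowski's first theorem in a few lines. Hadamard's inequality even gives you the stronger lower bound $\vol(\Lambda)\le\lambda_1\cdots\lambda_m$. The paper only ever uses the upper bound for the Euclidean ball (in Lemma~\ref{L:RadMin}), so your specialised proof covers everything that is needed.

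The tie issue you flag as the main obstacle is harmless. With the greedy choice, any $v\in\Lambda\setminus\operatorname{Span}(l_1,\dots,l_{k-1})$ was a candidate at step $k$, so $\|v\|\ge\|l_k\|$ by construction. The remaining point is $\|l_k\|=\lambda_k$, which holds for two reasons. First, $l_1,\dots,l_k$ all lie in $B(\|l_k\|)$, since greedy norms are nondecreasing, which gives $\lambda_k\le\|l_k\|$. Second, any $k$ independent lattice vectors include one outside the $(k-1)$-dimensional space $\operatorname{Span}(l_1,\dots,l_{k-1})$, which gives $\lambda_k\ge\|l_k\|$.
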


	\begin{definition}
		Let $\Lambda \subseteq \R^m$ be a full lattice.
		The covering radius of $\Lambda$ is defined as
		$$
		r(\Lambda) := \inf\{ r \mid \R^m = \Lambda + B(r) \}.
		$$
	\end{definition}
	
	\begin{lemma}\label{L:RadMin}
		Let $\Lambda \subseteq \R^m$ be a full lattice. Then
		$$
		r(\Lambda) \leq \frac{ 2^{m-1} \sqrt{m} }{ \vol(B(1))} \frac{\vol(\Lambda)}{\lambda_1(\Lambda)^{m-1}}.
		$$
	\end{lemma}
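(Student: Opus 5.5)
We combine the standard bound $r(\Lambda)\le \tfrac12\sum_i\lambda_i$ with Minkowski's second theorem (Theorem \ref{T:Mink}). Since every $\lambda_i\ge\lambda_1$, this turns the product of minima into a bound on the largest one.

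\textbf{Step 1: covering radius versus successive minima.} Choose linearly independent $l_1,\dots,l_m\in\Lambda$ with $\|l_i\|=\lambda_i$; these exist by the definition of the successive minima and discreteness. Their span $\Lambda'=\Z l_1+\dots+\Z l_m$ is a full sublattice of $\Lambda$, so $\Lambda'+B(r)=\R^m$ forces $\Lambda+B(r)=\R^m$, and hence $r(\Lambda)\le r(\Lambda')$.

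To bound $r(\Lambda')$, use the Gram--Schmidt vectors $l_i^\ast$ and the centred fundamental domain from the Example, with coefficients $\mu_i\in[-\tfrac12,\tfrac12)$. Every point of $\R^m$ is congruent modulo $\Lambda'$ to $\sum\mu_i l_i^\ast$. That point has norm
$$\Bigl(\sum_i \mu_i^2\|l_i^\ast\|^2\Bigr)^{1/2}\le \tfrac12\Bigl(\sum_i\|l_i^\ast\|^2\Bigr)^{1/2}\le\tfrac12\Bigl(\sum_i\lambda_i^2\Bigr)^{1/2},$$
using $\|l_i^\ast\|\le\|l_i\|$. Since $\lambda_m$ is the largest minimum, this gives
$$r(\Lambda)\le \tfrac12\Bigl(\sum_i\lambda_i^2\Bigr)^{1/2}\le \frac{\sqrt m}{2}\,\lambda_m .$$
This is the source of the factor $\sqrt m$.

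\textbf{Step 2: bound $\lambda_m$ with Minkowski's second theorem.} Apply the upper inequality of Theorem \ref{T:Mink}:
$$\lambda_1\cdots\lambda_m\vol(B(1))\le 2^m\vol(\Lambda).$$
Since $\lambda_i\ge\lambda_1$ for $i\le m-1$, we have $\lambda_1^{m-1}\lambda_m\le\lambda_1\cdots\lambda_m$. Therefore
$$\lambda_m\le \frac{2^m\vol(\Lambda)}{\vol(B(1))\,\lambda_1^{m-1}}.$$

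\textbf{Step 3: combine.} Putting Steps 1 and 2 together,
$$r(\Lambda)\le\frac{\sqrt m}{2}\cdot\frac{2^m\vol(\Lambda)}{\vol(B(1))\,\lambda_1^{m-1}}=\frac{2^{m-1}\sqrt m}{\vol(B(1))}\,\frac{\vol(\Lambda)}{\lambda_1(\Lambda)^{m-1}},$$
which is exactly the stated bound.

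The only step needing care is Step 1. There one must note that the minima are attained, and that passing to the sublattice $\Lambda'$, which may have strictly larger volume, only increases the covering radius. Once that is in place, the rest is routine.
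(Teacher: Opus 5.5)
Your proof is correct and follows essentially the same route as the paper. The paper also takes $m$ linearly independent lattice vectors of length at most $\lambda_m$ and uses the centred Gram--Schmidt box to get $r(\Lambda)\le\frac{\sqrt m}{2}\lambda_m$, then bounds $\lambda_m$ via Minkowski's second theorem with $\lambda_1^{m-1}\lambda_m\le\lambda_1\cdots\lambda_m$. Your opening mention of $r(\Lambda)\le\frac12\sum_i\lambda_i$ is harmless but inaccurate, since what you actually prove and use is the sharper bound $\frac12\bigl(\sum_i\lambda_i^2\bigr)^{1/2}$.
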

	
	\begin{proof}
		Let $\lambda_1 , \dots , \lambda_m$ be the successive minima of $\Lambda$. There are linearly independent lattice points $a_1,\dots, a_m$ with $\|a_i\|\leq \lambda_m$. We consider the corresponding Gram-Schmidt orthogonalised basis $a_1^\ast,\dots, a_m^\ast$ with $\|a_i^\ast\| \leq \|a_i\|\leq \lambda_m$. The domain
		$$P=\left\{ \sum \mu_i a_i^\ast \mid \mu_i \in \left[ -\frac{1}{2}, \frac{1}{2} \right)\right\}$$
		satisfies $\R^m= \Lambda + P$, and, for $x\in P$, we have $\|x\|^2 \leq \sum \|\frac{a_i^\ast}{2}\|^2 \leq \frac{m  \lambda_m^2}{4} $, so $P\subseteq B\left(\frac{\sqrt{m}  \lambda_m}{2}\right)$. Hence, the covering radius satisfies $r(\Lambda) \leq \frac{\sqrt{m}  \lambda_m}{2} $.
		
		Finally, by Theorem \ref{T:Mink}, we have
		$$ \lambda_m \leq \frac{2^m \vol(\Lambda)}{\lambda_1 \dots \lambda_{m-1} \vol(B(1))}\leq \frac{2^m  \vol(\Lambda)}{\lambda_1^{m-1} \vol(B(1))}. $$
	\end{proof}

	\begin{remark}\label{R:Voronoi}
		The closed Voronoi cell of a full lattice $\Lambda\subseteq \R^m$ is
		$$
		\overline{C}=\left\{ x \in \R^m \mid  \forall l\in \Lambda  :\,\|x\| \leq \|x-l\|\right\}.
		$$
		By choosing a basis $v_1,\dots,v_m$ of $\Lambda$ and imposing an order $v_1 < v_2<\dots < v_m$, we give $\Lambda$ the structure of a totally ordered group. Using this we define
        $$
        C=\{x\in \overline{C}  \mid  \forall l\in \Lambda : \|x\|=\|x-l\| \Rightarrow l\geq 0\}.
        $$
        One easily checks that this is a fundamental domain of $\Lambda$.
        A decomposition of an element $v\in \R^m$ as $v=c+l$ with $c\in C$ and $l\in \Lambda$ is achieved by taking $l$ to be the closest lattice point to $v$ with respect to the Euclidean norm. For points $v$ that have an equal shortest distance to several $l\in \Lambda$, we must choose the smallest such $l$ with respect to the ordering on $\Lambda$.

        As a consequence, we have $\vol(C)=\vol(\Lambda)$.
		It is also easy to see that
		$$
		\sup\{ \|x\| \mid x \in C \} = r(\Lambda).
		$$
	\end{remark}

	\begin{lemma}\label{L:LatMantCounting}
		Let $\Lambda\subseteq \R^m$ be a full lattice with covering radius $r(\Lambda)$ and let $X\subseteq \R^m$ be a measurable, bounded set. Let $I_X=\{x \in X \mid d(x, \partial X) > r(\Lambda) \}$, $O_X=\{x \in  \R^m \mid d(x, X) \leq r(\Lambda) \}$, and $M_X=\{x \in \R^m \mid d(x, \partial X) \leq r(\Lambda) \} = O_X \setminus I_X $.
		Then we have
        \begin{alignat}{2}
            \frac{\vol(I_X)}{\vol (\Lambda)} \leq \vert X \cap \Lambda \vert &\leq \frac{\vol(O_X)}{\vol( \Lambda)}, \tag{1} \label{LatMantP1}\\
            \bigg\vert\vert X \cap \Lambda \vert - \frac{\vol(X)}{\vol(\Lambda)} \bigg\vert &\leq \frac{\vol(M_X)}{\vol (\Lambda)}. \tag{2}
        \end{alignat}
	\end{lemma}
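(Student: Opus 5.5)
The plan is to tile $\R^m$ by translates of the Voronoi fundamental domain $C$ from Remark \ref{R:Voronoi}. Since $C$ is a fundamental domain, $\R^m=\bigsqcup_{l\in\Lambda}(l+C)$, and every $x\in l+C$ satisfies $\|x-l\|\leq r(\Lambda)$, because $\sup\{\|c\| \mid c\in C\}=r(\Lambda)$. All cells have volume $\vol(C)=\vol(\Lambda)$. Counting lattice points in $X$ then amounts to counting cells $l+C$, and we compare this count with volumes through the inclusions below.

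\emph{Upper bound in (1).} For $l\in X\cap\Lambda$, every point of $l+C$ lies within distance $r(\Lambda)$ of $l\in X$, so $l+C\subseteq O_X$. The cells are disjoint, hence
\[
\vert X\cap\Lambda\vert\,\vol(\Lambda)=\vol\Big(\bigsqcup_{l\in X\cap\Lambda}(l+C)\Big)\leq \vol(O_X).
\]

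\emph{Lower bound in (1).} Take $x\in I_X$ and let $l$ be the unique lattice point with $x\in l+C$, so $\|x-l\|\leq r(\Lambda)$. Suppose $l\notin X$. The segment from $x\in X$ to $l\notin X$ must meet $\partial X$. This holds for any set: take the supremum of parameters $s$ such that the initial part of the segment stays in $X$; the point there is in $\overline{X}$, and every neighbourhood of it contains points of the complement or it equals $l$. So $d(x,\partial X)\leq r(\Lambda)$, contradicting $x\in I_X$. Hence $l\in X$, and
\[
I_X\subseteq\bigsqcup_{l\in X\cap\Lambda}(l+C),
\]
which gives $\vol(I_X)\leq\vert X\cap\Lambda\vert\vol(\Lambda)$. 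Measurability of $I_X$ and $O_X$ is not an issue: distance functions are continuous, so these are Borel sets.

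\emph{Part (2).} Here $I_X\subseteq X\subseteq O_X$: the first inclusion is by definition, and the second because $d(x,X)=0$ for $x\in X$. By (1), both $\vert X\cap\Lambda\vert$ and $\vol(X)/\vol(\Lambda)$ lie in the interval $[\vol(I_X)/\vol(\Lambda),\,\vol(O_X)/\vol(\Lambda)]$. Their difference is therefore at most $\vol(O_X\setminus I_X)/\vol(\Lambda)$. It remains to identify $O_X\setminus I_X$ with $M_X$, as asserted in the statement, or at least to show $O_X\setminus I_X\subseteq M_X$, which suffices for the bound.

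\emph{Checking $O_X\setminus I_X\subseteq M_X$.} Take $x\in O_X\setminus I_X$. If $x\in X$, then $d(x,\partial X)\leq r(\Lambda)$ by definition of $I_X$. If $x\notin X$, choose $y\in\overline{X}$ with $\|x-y\|\leq r(\Lambda)$, which is possible since $X$ is bounded. The segment from $x$ to $y$ meets $\partial X$: either $y\in\partial X$, or $y$ is in the interior of $X$ and the connectedness argument above applies. Hence $d(x,\partial X)\leq r(\Lambda)$.

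\emph{Expected obstacle.} The only delicate point is these boundary and topology details: segments meeting $\partial X$, and the attainment of distances. They need care but are routine.
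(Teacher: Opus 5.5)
Your proposal is correct and follows the paper's proof: the Voronoi tiling gives $I_X\subseteq \bigsqcup_{l\in X\cap\Lambda}(l+C)\subseteq O_X$, and part (2) follows by comparing both quantities against $I_X\subseteq X\subseteq O_X$. The differences are cosmetic. You bound the discrepancy by $\vol(O_X\setminus I_X)$ rather than the paper's $\max(\vol(X\setminus I_X),\vol(O_X\setminus X))$. You also spell out the segment-meets-$\partial X$ and distance-attainment arguments, which the paper leaves implicit.
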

	
	\begin{proof}
		Let $C$ be a fundamental domain of $\Lambda$, such that $\overline{C}$ is the closed Voronoi cell of $\Lambda$, as in Remark \ref{R:Voronoi}. We define $Y=C +(\Lambda\cap X)$, so $\vol (Y) = \vol(\Lambda) \cdot \vert \Lambda \cap X \vert$. For the first part, all we have to show is $I_X \subseteq Y \subseteq O_X$.
		
		Let $x\in I_X$. Then there is an $l \in \Lambda$ with $x \in l + C$. Since $\|x-l\| \leq r(\Lambda)$ and $d(x,\partial X)>r(\Lambda)$, we have $l\in X$, and therefore, $x\in Y$.
		
		Now let $y\in Y$. Then there is an $l\in \Lambda\cap X$ with $y\in l+C$, so $\|y-l\|\leq r(\Lambda)$, and therefore, $y\in O_X$. This proves \eqref{LatMantP1}.
		
		For the second part, we note that $X\setminus I_X \subseteq M_X$ and $O_X \setminus X \subseteq M_X$. From the first part, we also have 
		$$
		\frac{\vol(I_X)}{\vol \Lambda} - \frac{\vol(X)}{\vol \Lambda}\leq \vert X \cap \Lambda \vert - \frac{\vol(X)}{\vol \Lambda} \leq \frac{\vol(O_X)}{\vol \Lambda} - \frac{\vol(X)}{\vol \Lambda},
		$$ 
		so 
		$$\left\vert \vert X \cap \Lambda \vert - \frac{\vol(X)}{\vol \Lambda}  \right\vert \leq \frac{\max (\vol(X\setminus I_X), \vol(O_X\setminus X) )}{\vol \Lambda} \leq \frac{\vol(M_X)}{\vol \Lambda}. $$
	\end{proof}

	\begin{definition}
		We say that a set $S \subseteq \R^m$ is of Lipschitz class $\operatorname{Lip}(N,M,L)$ with $N,M\in \mathbb{N}$ and $L\in \R_{>0}$, if there are $M$ maps $\phi_1,\dots,\phi_M : [0,1]^N \to \R^m$, such that the images of $\phi_1,\dots,\phi_M$ cover all of $S$, and the maps satisfy the Lipschitz condition
		$$
		\| \phi_i (x) - \phi_i(y) \| \leq L  \|x-y\| \quad \forall x,y \in [0,1]^N,
		$$
		for $i=1,\dots,M$.
	\end{definition}

	\begin{proposition}\label{P:LatLipCounting}
		Let $X\subseteq \R^m$ be a bounded set and $\Lambda \subseteq \R^m$ a full lattice, such that $\partial X$ is of Lipschitz class $\operatorname{Lip}(m-1,M,L)$, and the covering radius of $\Lambda$ satisfies $r(\Lambda) \leq \sqrt{m-1} L$.
		Then $X$ is measurable and
		$$
		\bigg\vert\vert X \cap \Lambda \vert - \frac{\vol(X)}{\vol(\Lambda)} \bigg\vert \leq M   \left(\sqrt{m-1} L \right)^{m-1} 2^m \vol \left(B (1) \right) \frac{r(\Lambda)}{\vol(\Lambda)}.
		$$
	\end{proposition}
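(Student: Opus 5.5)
The plan is to reduce everything to Lemma \ref{L:LatMantCounting} and then bound $\vol(M_X)$ by covering the boundary with small balls. Write $\rho = r(\Lambda)$, $\delta=\rho/(\sqrt{m-1}L)\le 1$ by hypothesis, and fix the parametrisations $\phi_1,\dots,\phi_M:[0,1]^{m-1}\to\R^m$ whose images cover $\partial X$.

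\textbf{Measurability.} Measurability of $X$ follows from $\partial X$ having Lebesgue measure zero. Indeed, $\partial X$ is a finite union of Lipschitz images of an $(m-1)$-dimensional cube, and the covering argument below, applied with an arbitrary small mesh, shows that $\partial X$ lies in sets of arbitrarily small volume. Hence $X$ differs from its interior, an open set, by a null set, so $X$ is Lebesgue measurable.

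\textbf{Bounding $\vol(M_X)$.} Set $k=\lceil 1/\delta\rceil$, so $k\le 2/\delta$ since $\delta\le1$. Subdivide $[0,1]^{m-1}$ into $k^{m-1}$ subcubes of side $1/k$. Each subcube has diameter $\sqrt{m-1}/k\le \sqrt{m-1}\,\delta$, so by the Lipschitz condition its image under $\phi_i$ has diameter at most $L\sqrt{m-1}\,\delta=\rho$. Thus the image lies in a ball of radius $\rho$ around the image $p$ of any point of the subcube.

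Now let $x\in M_X$. Then $d(x,\partial X)\le\rho$, and since $\partial X$ is closed there is some $y\in\partial X$ with $\|x-y\|\le\rho$. This $y$ lies in the image of some subcube, centred at some $p$, so $\|x-p\|\le 2\rho$. Therefore $M_X$ is covered by $Mk^{m-1}$ balls of radius $2\rho$, and
$$\vol(M_X)\le M k^{m-1}(2\rho)^m\vol(B(1))\le M\left(\tfrac{2\sqrt{m-1}L}{\rho}\right)^{m-1}2^m\rho^m\vol(B(1)).$$

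\textbf{Matching the constant.} This bound carries an extra factor $2^{m-1}$ compared with the claimed one, so the main obstacle is to tighten the covering. There are two refinements I would try.

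\emph{First refinement: larger cells.} Choose subcubes of side $2\delta$ instead of $\delta$, so that $k=\lceil 1/(2\delta)\rceil$. In the regime $1/(2\delta)\ge 1$ this gives $k\le 1/\delta$. Each image then has radius at most $\rho$ about the image of the subcube's centre: the centre is within half the diameter, $\sqrt{m-1}\,\delta$, of every point of the subcube, and the Lipschitz condition turns this into $\rho$. Hence $M_X$ is covered by $Mk^{m-1}\le M\delta^{-(m-1)}$ balls of radius $2\rho$, which gives exactly
$$\vol(M_X)\le M\left(\sqrt{m-1}L\right)^{m-1}2^m\vol(B(1))\,\rho.$$

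\emph{Second refinement: the regime $\delta>1/2$.} Here $k=1$ and we need $M$ balls to suffice. The centre $c$ of $[0,1]^{m-1}$ is within $\sqrt{m-1}/2$ of every point of the cube, so the whole image of $\phi_i$ lies within $L\sqrt{m-1}/2<\rho$ of $\phi_i(c)$. Consequently $M_X$ is covered by $M$ balls of radius $2\rho$, and
$$\vol(M_X)\le M(2\rho)^m\vol(B(1))=M\,2^m\rho^{m-1}\vol(B(1))\,\rho\le M\left(\sqrt{m-1}L\right)^{m-1}2^m\vol(B(1))\,\rho,$$
using $\rho\le\sqrt{m-1}L$ in the last step.

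\textbf{Conclusion.} In both regimes we obtain the bound on $\vol(M_X)$ above. Dividing by $\vol(\Lambda)$ and invoking part (2) of Lemma \ref{L:LatMantCounting} then yields the stated inequality.
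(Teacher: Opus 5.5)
Your proof is correct and follows essentially the same route as the paper. Your final choice $k=\lceil 1/(2\delta)\rceil$ is an integer in $[\tfrac{1}{2\delta},\tfrac{1}{\delta}]$ in both of your regimes (so the case split is unnecessary), which is exactly the paper's choice of $Q$ with $\frac{\sqrt{m-1}L}{2r(\Lambda)}\le Q\le\frac{\sqrt{m-1}L}{r(\Lambda)}$. It is followed by the same covering of $M_X$ by $MQ^{m-1}$ balls of radius $2r(\Lambda)$ and the same appeal to Lemma \ref{L:LatMantCounting}; your null-boundary argument for the measurability of $X$ is a welcome addition, since the paper's proof leaves that claim implicit.
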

	\begin{proof}    
		Consider one of the maps $\phi_i : [0,1]^{m-1} \to \partial X$. We partition the interval $[0,1]$ into $Q$ equal parts for some $Q\in \mathbb{N}$. In this way, $[0,1]^{m-1}$ gets partitioned into $Q^{m-1}$ cubes of sidelength $1/Q$. Let $C$ be one of these cubes and let $c_0$ be the centre point of $C$. 
		Then for all $c\in C$ we have $\|c-c_0\|\leq \frac{\sqrt{m-1}}{2Q}$. Because of the Lipschitz-condition on $\phi_i$, the set $\{x\in \R^m \mid d(x,\phi_i(C)) \leq r(\Lambda) \} $ is contained in a ball centred at $\phi_i (c_0)$ with radius $L \frac{\sqrt{m-1}}{2Q} + r(\Lambda)$.
		This makes it possible to bound the volume of $M_X= \left\{ x \in \R^m \mid d(x, \partial X) \leq r(\Lambda) \right\} $ used in Lemma \ref{L:LatMantCounting} by
		$$
		\vol(M_X) \leq M  Q^{m-1}  \vol \left(B \left(L \frac{\sqrt{m-1}}{2Q} + r(\Lambda)\right) \right).
		$$
		
		The condition $r(\Lambda) \leq \sqrt{m-1} L$ means we can choose the integer $Q$, such that
		$$
		\frac{ \sqrt{m-1} L}{2 r(\Lambda)} \leq Q \leq  \frac{\sqrt{m-1} L}{ r(\Lambda)},
		$$
		and therefore, $L \frac{\sqrt{m-1}}{2Q} \leq r(\Lambda)$.
		
		Putting everything together, we get
		\begin{alignat*}{2}
			\vol(M_X) &\leq M  Q^{m-1}  \vol \left(B \left(L \frac{\sqrt{m-1}}{2Q} + r(\Lambda)\right) \right) \\
			&\leq M  \left(\frac{\sqrt{m-1} L}{ r(\Lambda)} \right)^{m-1}  \vol \left(B \left(2 r(\Lambda)\right) \right) \\
			&= M   \left(\frac{\sqrt{m-1} L}{ r(\Lambda)} \right)^{m-1} \left(2 r(\Lambda)\right)^m  \vol \left(B (1) \right) \\
			& = M   \left(\sqrt{m-1} L \right)^{m-1} 2^m \vol \left(B (1) \right) r(\Lambda).
		\end{alignat*}
		The statement now follows from Lemma \ref{L:LatMantCounting}.
	\end{proof}
	
    The last lattice-theoretic result, we will need, concerns the relation between a basis of a lattice and the corresponding Gram--Schmidt orthogonalised basis.
    The following is a consequence of the termination of the LLL-algorithm.
    
	\begin{lemma}[{\cite[Proposition~1.6]{lenstra_factoring_1982}}]\label{L:LLL}
		Every lattice $\Lambda$ has a basis $v_1,\dots, v_k$, such that $\|v_i\| \leq 2^{\frac{i-1}{2}} \|v_i^\ast \|$ for all $i=1,\dots,k$, where the $v_i^\ast$ form the corresponding Gram-Schmidt orthogonalised basis.
	\end{lemma}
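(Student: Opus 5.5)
The plan is to construct a suitable basis by running a size-reduction/exchange procedure (the LLL algorithm) and to argue that it must terminate. Take any basis $v_1,\dots,v_k$ of $\Lambda$. We may then alternately perform two operations.

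\begin{enumerate}
\item \emph{Size reduction.} For $j<i$, subtract from $v_i$ the integer multiple $\lfloor \mu_{ij}\rceil v_j$, where $\mu_{ij}=\langle v_i,v_j^\ast\rangle/\langle v_j^\ast,v_j^\ast\rangle$, working downward in $j$. This makes $|\mu_{ij}|\le \frac12$ for all $j<i$. It changes no $v_i^\ast$, since $v_i$ only changes by elements of $\operatorname{Span}(v_1,\dots,v_{i-1})$.
\item \emph{Exchange.} If for some $i$ the Lov\'asz condition $\|v_i^\ast\|^2\ge \tfrac12\|v_{i-1}^\ast\|^2$ fails, swap $v_{i-1}$ and $v_i$.
\end{enumerate}

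When neither operation applies, we have $\|v_j^\ast\|^2\le 2^{i-j}\|v_i^\ast\|^2$ for $j\le i$. Since $v_i=v_i^\ast+\sum_{j<i}\mu_{ij}v_j^\ast$ with orthogonal summands,
\[
\|v_i\|^2\le \|v_i^\ast\|^2\Bigl(1+\tfrac14\sum_{j<i}2^{i-j}\Bigr)=\|v_i^\ast\|^2\Bigl(1+\tfrac14(2^i-2)\Bigr)\le 2^{i-1}\|v_i^\ast\|^2 .
\]
The last inequality holds for $i\ge1$: it is equality at $i=1,2$, and for larger $i$ it follows from $1+2^{i-2}-\frac12\le 2^{i-1}$. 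This gives exactly the claimed bound $\|v_i\|\le 2^{\frac{i-1}{2}}\|v_i^\ast\|$.

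The main obstacle is termination. It is handled by the potential $D=\prod_{i=1}^k d_i$, where $d_i=\prod_{j\le i}\|v_j^\ast\|^2$ is the Gram determinant of $v_1,\dots,v_i$.

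First, size reduction leaves every $v_j^\ast$, and hence $D$, unchanged.

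Second, a swap at index $i$ alters only $d_{i-1}$. The new $v_{i-1}^\ast$ is $v_i^\ast+\mu_{i,i-1}v_{i-1}^\ast$, whose squared norm is less than $\tfrac12\|v_{i-1}^\ast\|^2+\tfrac14\|v_{i-1}^\ast\|^2=\tfrac34\|v_{i-1}^\ast\|^2$. Hence $D$ decreases by a factor of at least $\tfrac34$. (With the constant $\tfrac12$ in the Lov\'asz condition, the factor is in fact at most $\tfrac34$.)

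Third, $D$ is bounded below by a positive constant depending only on $\Lambda$. Each $d_i$ is the squared covolume of the sublattice $\Lambda\cap\operatorname{Span}(v_1,\dots,v_i)$ generated by $v_1,\dots,v_i$. By Minkowski's second theorem, Theorem~\ref{T:Mink}, applied inside that $i$-dimensional span, $d_i\ge c_i\lambda_1(\Lambda)^{2i}$ for a constant $c_i>0$. Indeed, each successive minimum of the sublattice is at least $\lambda_1(\Lambda)$, and $\lambda_1(\Lambda)>0$ by discreteness.

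Therefore only finitely many swaps can occur. Between swaps, size reduction is a finite procedure, so the algorithm halts with a basis satisfying both conditions, and that basis has the required property.
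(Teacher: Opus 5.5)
Your proof is correct and follows the same route as the paper, which gives no argument of its own and simply cites Lenstra--Lenstra--Lov\'asz, Proposition~1.6, as a consequence of the LLL algorithm terminating. As there, the bound comes from size-reduction together with $\|v_i^\ast\|^2\ge\frac12\|v_{i-1}^\ast\|^2$, and termination comes from the potential $D=\prod_i d_i$, which is multiplied by less than $\frac34$ at each swap and is bounded below via Minkowski's second theorem. One harmless slip: $1+\frac14(2^i-2)\le 2^{i-1}$ is an equality only at $i=1$, since at $i=2$ it reads $\frac32\le 2$.
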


	\section{Geometry of numbers}\label{S:geoNum}
    In this section, we recall how to reinterpret the problem of estimating $\sigma_K(t,\mathcal{C})=\# \{ \mathfrak{b} \in \mathcal{C}  \mid 0< N(\mathfrak{b})\leq t\}$ as a problem in lattice theory. To get started, we choose an integral ideal from the inverse class $\mathfrak{a}\in \mathcal{C}^{-1}$. Given an ideal $\mathfrak{b} \in \mathcal{C}$, satisfying $N(\mathfrak{b})<t$, the product $\mathfrak{ab}$ is a principal ideal $\alpha\mathcal{O}_K$ with $0<N(\alpha\mathcal{O}_K)\leq N(\mathfrak{a}) t$. Moreover, we have $\mathfrak{a} \mid \alpha\mathcal{O}_K$, or, equivalently, $\alpha \in \mathfrak{a}$. Because of the unique ideal factorisation in $\OO_K$, this gives a one-to-one correspondence, i.e.,
    $$
    \sigma_K(t,\mathcal{C})=\#\{\alpha\mathcal{O}_K \mid \alpha\in \mathfrak{a},\, 0<\vert N_{K/\Q}(\alpha) \vert\leq N(\mathfrak{a}) t \}.
    $$
    Two nonzero elements $\alpha,\alpha^\prime\in \mathfrak{a}$ generate the same principal ideal, if and only if  $\alpha/\alpha^\prime\in \mathcal{O}_K^\times$. Therefore, if we consider the set $E(\mathfrak{a},t)=\{\alpha \in \mathfrak{a} \mid 0< \vert N_{K/\Q}(\alpha)\vert \leq N(\mathfrak{a}) t \}$ with $\mathcal{O}_K^\times$ acting on it by multiplication, then $\sigma_K(t,\mathcal{C})$ is exactly the number of orbits under this action. To translate this into a lattice problem, we have to recall some classical theory from the geometry of numbers.
    
	\subsection{Important maps and spaces}\label{Ss:GeoNum}
		Recall the Minkowski embedding of a number field $K$ into the Minkowski space $\mkw:=\R^{r_1}\times \C^{r_2}$
		\begin{alignat*}{2}
			\phi : K &\rightarrow \mkw, \\
			\alpha &\mapsto (\sigma_i (\alpha))_{i=1}^{r_1+r_2}.
		\end{alignat*}
        We view $\mkw$ as an $n$-dimensional $\R$-algebra with a norm $\|\cdot \|$ given by $\| (x_i)_{i=1}^{r_1+r_2}\|=\left( \sum_{i=1}^{r_1+r_2} \vert x_i\vert^2  \right)^{1/2}$, where $\vert \cdot \vert$ is the absolute value on $\R$ or $\C$ respectively.
        
        It is well known (see, e.g., \cite[§5~Theorem~36]{marcus_number_2018}) that for any integral ideal $\mathfrak{a}\subseteq\mathcal{O}_K$, the image $\phi(\mathfrak{a})\subseteq \mkw$ is a full lattice with
        \begin{equation} 
            \vol(\phi(\mathfrak{a})) = 2^{-r_2} \sqrt{\Delta_K} N(\mathfrak{a}).
        \end{equation}

        We define the logarithmic space $\lsp:=\R^{r_1+r_2}$. We will study the map
		\begin{alignat*}{2}
			\ell : \mkw^\times &\rightarrow \lsp, \\
			(x_i) &\mapsto (n_i \log(\vert x_i \vert)),
		\end{alignat*}
        where $\mkw^\times =(\R^\times)^{r_1} \times (\C^\times)^{r_2}$ are the units of $\mkw$, and we let $n_i=1$ for $1\leq i\leq r_1$ and $n_i=2$ for $r_1<i\leq r_1+r_2$.
        
		We define the norm map
		\begin{alignat*}{2}
			\NN: \mkw &\rightarrow \R,\\
			(x_i)_{i=1}^{r_1+r_2} &\mapsto \prod_{i=1}^{r_1+r_2} \vert x_i \vert^{n_i}.
		\end{alignat*}
		Notice that for $\alpha \in K$, we have $\vert N_{K/\Q}(\alpha)\vert = \NN(\phi(\alpha))$.
		  There is a corresponding  linear map
		\begin{alignat*}{2}
			\Nl : \lsp &\rightarrow \R, \\
			(x_i) &\mapsto \sum_{i=1}^{r_1+r_2} x_i,
		\end{alignat*}
		which satisfies $\Nl\circ\ell = \log \circ \NN$. We define the hyperplane $H := \{ x \mid \Nl(x) = 0  \} \subseteq \lsp$. It is well-known that
		$$ \Gamma:=\ell \circ \phi(\mathcal{O}_K^\times) \subseteq H$$
		is a lattice of rank $r=r_1+r_2-1$, which we call the unit-lattice. We can see that $\vol(\Gamma)=\sqrt{r_1+r_2} R_K$.

        The Minkowski embedding $\phi$ is a ring homomorphism, and moreover, multiplication by a fixed element $y\in \mkw$ is $\R$-linear.
        We denote this multiplication-map by $\Psi_y$, i.e.,
		\begin{alignat*}{2}
			\Psi_y : \; \mkw &\to \mkw, \\
			 (x_i)_{i=1}^{r_1+r_2} &\mapsto \left( x_i y_i \right)_{i=1}^{r_1+r_2}.
		\end{alignat*}
        Observe that $\phi(\alpha \beta)=\Psi_{\phi(\alpha)}(\phi(\beta))$, and $\vert \det \Psi_y \vert = \NN(y)$. 
        
        Furthermore, we note that $\ell:\mkw^\times \to \lsp$ is a group homomorphism. On logarithmic space we define, for any element $y\in \lsp$, the translation map
        \begin{alignat*}{2}
			\tau_{y} : \;\lsp &\to \lsp, \\
			 x & \mapsto  x + y.
		\end{alignat*}
    The following lemma describes a key interaction between Minkowski space and logarithmic space.
	\begin{lemma}\label{L:TransDia}
		For $y \in \mkw^\times$, the diagram
	\[
	\xymatrix{
		\mkw^\times \ar[d]_-{\ell} \ar[r]^-{\Psi_y} & \mkw^\times  \ar[d]^-{\ell} \\
		\lsp  \ar[r]_-{\tau_{\ell(y)}} & \lsp }
	\]
	commutes.
	\end{lemma}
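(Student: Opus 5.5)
The plan is a direct componentwise computation: evaluate both paths of the diagram at an arbitrary $x=(x_i)_{i=1}^{r_1+r_2}\in\mkw^\times$ and compare coordinates. Before doing so, I will check that the top arrow makes sense as a map $\mkw^\times\to\mkw^\times$. Since $y\in\mkw^\times$, every $y_i\neq 0$, so each $x_iy_i\neq 0$ and $\Psi_y(x)\in\mkw^\times$. Thus $\ell\circ\Psi_y$ is defined on all of $\mkw^\times$.

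Going right and then down, the $i$-th coordinate of $\ell(\Psi_y(x))$ is
\[
n_i\log\vert x_iy_i\vert.
\]
I will use multiplicativity of the absolute value on $\R$ and on $\C$, namely $\vert x_iy_i\vert=\vert x_i\vert\,\vert y_i\vert$. Together with the functional equation of $\log$ on $\R_{>0}$, this gives
\[
n_i\log\vert x_iy_i\vert = n_i\log\vert x_i\vert + n_i\log\vert y_i\vert .
\]
The right-hand side is the $i$-th coordinate of $\ell(x)+\ell(y)=\tau_{\ell(y)}(\ell(x))$, which is the path going down and then right. Since $x$ was arbitrary, the diagram commutes.

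Equivalently, the identity just says that $\ell$ is a group homomorphism from $(\mkw^\times,\cdot)$ to $(\lsp,+)$, since $\Psi_y(x)=xy$. There is no real obstacle here. The only points needing care are the well-definedness of $\Psi_y$ on $\mkw^\times$, noted above, and that the weights $n_i\in\{1,2\}$ factor through linearly.
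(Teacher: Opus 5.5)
Your proof is correct and follows the paper's own argument, which is the one-line computation $\ell\circ\Psi_y(x)=\ell(xy)=\ell(x)+\ell(y)=\tau_{\ell(y)}\circ\ell(x)$, i.e., that $\ell$ is a group homomorphism. You additionally check this coordinatewise and verify that $\Psi_y$ maps $\mkw^\times$ into itself, which the paper leaves implicit.
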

    \begin{proof}
        For $x,y\in \mkw^\times$, we have
        $$
        \ell \circ \Psi_y(x)=\ell(x\cdot y)=\ell(x) + \ell(y) =\tau_{\ell(y)} \circ \ell (x).
        $$
    \end{proof}
	
\subsection{Fundamental domains for \texorpdfstring{$\mathcal{O}_K^\times$}{OK*}-action}
    With the setup so far, we can describe the image of the set $E(\mathfrak{a},t)=\{\alpha \in \mathfrak{a} \mid 0< \vert N_{K/\Q}(\alpha)\vert \leq N(\mathfrak{a}) t \}$ under $\phi$
    as those lattice points $x\in \phi(\mathfrak{a)}$ that satisfy $0<\NN(x)\leq N(\mathfrak{a}) t $. We now describe, how we can count the number of orbits of the $\OO_K^\times$-action on the set $E(\mathfrak{a},t)$.
	
	\begin{definition}\label{D:fundDom}
		Let $D\subseteq H$, $I \subseteq \R^+$, and $v\in \lsp$, such that $\Nl(v) = 1$. We define
		$$\mathcal{E}(D,I,v):=\{h+\mu  v \mid h \in D, \mu \in \R, \exp(\mu)\in I \}$$
		and
		\begin{equation}
			\F(D,I,v):= \ell^{-1} ( \mathcal{E}(D,I,v) ).
		\end{equation}
	\end{definition}
	
	\begin{lemma}\label{L:fundsProperties}
		Let $D\subseteq H$ be a fundamental domain of the unit lattice $\Gamma$, and let $v\in \lsp$, such that $\Nl (v) = 1$. Let $I\subseteq \R^+$.
        \begin{enumerate}
            \item \label{i:normFund} We have
            $$
            \F(H,I,v)=\{ x \in \mkw \mid \NN(x) \in I \}.
            $$
            \item \label{i:unitFund} Fix a decomposition $\mathcal{O}_K^\times = W \oplus U$, where $W$ is the subgroup of roots of unity and $U\cong \mathbb{Z}^r$. Then, for every $x \in \F(H,I,v)$, there is a unique $\varepsilon \in U$, such that
            $$
            \Psi_{\phi(\varepsilon)}x \in \F(D,I,v),
            $$
            while, for any $\zeta \in W$,
            $$
            \Psi_{\phi(\zeta)} \F(D,I,v) = \F(D,I,v).
            $$
        \end{enumerate}
	\end{lemma}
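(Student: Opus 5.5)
Both parts reduce, via $\ell$ and Lemma \ref{L:TransDia}, to statements about the decomposition $\lsp = H \oplus \R v$. Since $\Nl(v)=1$, every $z\in\lsp$ decomposes uniquely as $z = h + \mu v$ with $\mu = \Nl(z)\in\R$ and $h = z - \Nl(z)v \in H$. I will use this coordinate system throughout.

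For part (\ref{i:normFund}), take $x \in \mkw$. If $x\in\F(H,I,v)$, then $x\in\mkw^\times$ and $\ell(x) = h+\mu v$ with $\exp(\mu)\in I$. Hence $\log\NN(x) = \Nl(\ell(x)) = \Nl(h)+\mu = \mu$, so $\NN(x) = \exp(\mu)\in I$. Conversely, suppose $\NN(x)\in I\subseteq\R^+$. Then $\NN(x)>0$, so every coordinate of $x$ is nonzero and $x\in\mkw^\times$. Writing $\ell(x)=h+\mu v$ as above gives $\mu=\Nl(\ell(x))=\log\NN(x)$, so $\exp(\mu)\in I$ and $x\in\F(H,I,v)$. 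The only point needing care is that $\ell$ is defined on $\mkw^\times$ only, and positivity of the norm handles this.

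For part (\ref{i:unitFund}), let $x\in\F(H,I,v)$ and write $\ell(x)=h+\mu v$. For $\varepsilon\in\OO_K^\times$, Lemma \ref{L:TransDia} gives $\ell(\Psi_{\phi(\varepsilon)}x) = h + \ell\phi(\varepsilon) + \mu v$. Here $\ell\phi(\varepsilon)\in\Gamma\subseteq H$, so the $v$-coordinate $\mu$ is unchanged and the $H$-component becomes $h+\ell\phi(\varepsilon)$. Thus $\Psi_{\phi(\varepsilon)}x\in\F(D,I,v)$ if and only if $h+\ell\phi(\varepsilon)\in D$, where the "if" direction uses the uniqueness of the decomposition. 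Since $D$ is a fundamental domain of $\Gamma$ in $H$, there is exactly one $\gamma\in\Gamma$ with $h+\gamma\in D$. It remains to check that $\ell\circ\phi$ restricted to $U$ is a bijection onto $\Gamma$. Surjectivity holds because $\Gamma=\ell\phi(W\oplus U)$ and $\ell\phi(W)=0$, since roots of unity have absolute value $1$ in every embedding. Injectivity holds because the kernel of $\ell\circ\phi$ on $\OO_K^\times$ consists of units all of whose conjugates have absolute value $1$. By Kronecker's theorem this kernel is exactly $W$, and $W\cap U=0$. Therefore there is a unique $\varepsilon\in U$ with $\ell\phi(\varepsilon)=\gamma$, which gives existence and uniqueness.

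For the statement about $\zeta\in W$: $\ell\phi(\zeta)=0$, so Lemma \ref{L:TransDia} gives $\ell(\Psi_{\phi(\zeta)}y)=\ell(y)$ for every $y\in\mkw^\times$. Since $\F(D,I,v)$ is a full preimage under $\ell$, this yields $\Psi_{\phi(\zeta)}\F(D,I,v)\subseteq\F(D,I,v)$. Applying the same argument to $\zeta^{-1}$ and composing gives equality. The main point to get right in this part is the identification of $\ker(\ell\circ\phi|_{\OO_K^\times})$ with $W$. This is standard and was already implicit when the paper called $\Gamma$ a lattice of rank $r$ (Dirichlet's unit theorem).
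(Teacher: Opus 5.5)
Your proof is correct and follows the paper's argument: the decomposition $\lsp = H\oplus\R v$ with $\mu=\Nl(\cdot)$ for part (1), and Lemma \ref{L:TransDia} together with the bijection $\ell\circ\phi\colon U\to\Gamma$ for part (2). You also spell out several points the paper leaves implicit: that $\NN(x)\in I$ forces $x\in\mkw^\times$, the uniqueness of $\varepsilon$, Kronecker's theorem for injectivity, and the reverse inclusion via $\zeta^{-1}$. One trivial slip: it is the ``only if'' direction of your equivalence (if $\Psi_{\phi(\varepsilon)}x\in\F(D,I,v)$ then $h+\ell\phi(\varepsilon)\in D$), not the ``if'' direction, that uses uniqueness of the decomposition.
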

    \begin{proof}
        For part (\ref{i:normFund}), observe that any element $x\in\lsp$ has a unique decomposition $x=h+\mu  v$ with $h\in H$ and $\mu \in \R$, and that $\Nl(x)=\Nl(h)+\mu \Nl(v)=\mu$. Thus, for $x\in \mkw^\times$,
        $$
        x \in \F(H,I,v) \Leftrightarrow \exp(\Nl(\ell(x)))\in I \Leftrightarrow \NN(x)=\exp (\log(\NN(x)))\in I.
        $$
        For part (\ref{i:unitFund}), observe that $\ell\circ\phi$ restricts to an isomorphism between $U$ and $\Gamma$. Given $x\in \F(D,I,v)$ we can decompose $\ell(x)=h+\mu v$. Now there exists a unique $l\in \Gamma$, such that $\tau_l (h)=h+ l \in D$. Letting $\varepsilon \in U$ be the preimage of $l$ under $\ell\circ\phi$, we see that $\Psi_{\phi(\varepsilon)}x \in \F(D,I,v)$, by Lemma \ref{L:TransDia}.
        
        Finally, $\Psi_{\phi(\zeta)} \F(D,I,v) = \F(D,I,v)$ follows from $\ell\circ\phi(W)=0$.
    \end{proof}

	We let $\mathfrak{a} \in \mathcal{C}^{-1}$ be an integral ideal and define, for $t>0$,
	\begin{equation}\label{E:A_t}
		A_t := \frac{1}{t N(\mathfrak{a})^{1/n}} \phi(\mathfrak{a}).
	\end{equation}
	We see that
	\begin{equation}
		\vol(A_t) = \frac{\sqrt{\Delta_K}}{2^{r_2}  t^n}.
	\end{equation}
    
    With all this set up, we finally see how we can estimate $\sigma_K(t,\mathcal{C})$ using lattice theory.
    \begin{corollary}\label{C:fundDoms}
    Let $D\subseteq H$ be a fundamental domain of the unit lattice $\Gamma$, and let $v\in \lsp$, such that $\Nl (v) = 1$. Then for all $t>0$, we have
		$$
		\sigma_K(t^n,\mathcal{C})= \frac{1}{\omega_K} \vert A_t \cap \F(D,\left(0,1\right],v ) \vert.
		$$
    \end{corollary}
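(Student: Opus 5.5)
The argument combines the orbit description from the start of this section with Lemma \ref{L:fundsProperties}, after a rescaling by $t N(\mathfrak{a})^{1/n}$. As noted there, $\sigma_K(t^n,\mathcal{C})$ is the number of orbits of $\OO_K^\times$ acting by multiplication on
$$E(\mathfrak{a},t^n)=\{\alpha\in\mathfrak{a} \mid 0<\vert N_{K/\Q}(\alpha)\vert\leq N(\mathfrak{a})t^n\}.$$
Applying $\phi$ identifies $E(\mathfrak{a},t^n)$ with the set of $x\in\phi(\mathfrak{a})$ satisfying $0<\NN(x)\leq N(\mathfrak{a})t^n$.

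\textbf{Step 1: rescaling.} Set $c=t N(\mathfrak{a})^{1/n}$, so that $A_t=c^{-1}\phi(\mathfrak{a})$. Scalar multiplication by $c^{-1}$ on $\mkw$ is $\Psi_y$ with $y=(c^{-1},\dots,c^{-1})$. Hence $\NN(c^{-1}x)=c^{-n}\NN(x)=\NN(x)/(t^n N(\mathfrak{a}))$. The condition $0<\NN(x)\le N(\mathfrak{a})t^n$ therefore becomes $\NN(c^{-1}x)\in(0,1]$. By Lemma \ref{L:fundsProperties}(\ref{i:normFund}) this is exactly $c^{-1}x\in\F(H,(0,1],v)$. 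So $x\mapsto c^{-1}x$ is a bijection from $\phi(E(\mathfrak{a},t^n))$ onto $A_t\cap\F(H,(0,1],v)$.

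Scaling commutes with each $\Psi_{\phi(\varepsilon)}$ for $\varepsilon\in\OO_K^\times$. The bijection is therefore equivariant for the unit action, and $A_t$ is stable under this action because $\mathfrak{a}$ is an $\OO_K$-module. Orbits on the two sides correspond.

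\textbf{Step 2: counting orbits.} Write $\OO_K^\times=W\oplus U$ as in Lemma \ref{L:fundsProperties}(\ref{i:unitFund}). By that lemma, each $U$-orbit in $A_t\cap\F(H,(0,1],v)$ meets $\F(D,(0,1],v)$ in exactly one point. Consequently the $U$-orbits are in bijection with $A_t\cap\F(D,(0,1],v)$.

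The full $\OO_K^\times$-orbits are obtained by further quotienting by $W$, which has order $\omega_K$. By the same lemma, $W$ preserves $\F(D,(0,1],v)$, and it also preserves $A_t$. So $W$ acts on $A_t\cap\F(D,(0,1],v)$, and the $\OO_K^\times$-orbits correspond to the $W$-orbits on this set.

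\textbf{Step 3: the $W$-action is free.} It remains to check that $W$ acts freely. This holds because $\Psi_{\phi(\zeta)}x=x$ with $x\in\mkw^\times$ forces $\phi(\zeta)=1$, and $\phi$ is injective on $K$. Every $W$-orbit then has exactly $\omega_K$ elements, so the number of orbits is $\frac{1}{\omega_K}\vert A_t\cap\F(D,(0,1],v)\vert$, which is the claimed formula.

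The only point needing care is this freeness, together with the observation that points of $\F(\cdot)$ lie in $\mkw^\times$ since $\ell$ is only defined there. The rest is bookkeeping.
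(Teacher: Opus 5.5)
Your proof is correct and follows the same route as the paper. You rescale by $t N(\mathfrak{a})^{1/n}$ to identify $E(\mathfrak{a},t^n)$ equivariantly with $A_t \cap \F(H,(0,1],v)$ via part (1) of Lemma \ref{L:fundsProperties}, then count orbits using part (2); you also make explicit two steps the paper leaves implicit, namely the $W$-equivariant identification of $U$-orbits with $A_t\cap\F(D,(0,1],v)$ and the freeness of the $W$-action that justifies dividing by $\omega_K$.
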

    
    \begin{proof}
        Recall from the start of this section that $\sigma_K(t^n,\mathcal{C})$ is equal to the number of orbits in $E(\mathfrak{a},t^n)=\{\alpha \in \mathfrak{a} \mid 0< \vert N_{K/\Q}(\alpha)\vert \leq N(\mathfrak{a}) t^n \}$ under the action of $\mathcal{O}_K^\times$.
        By part \eqref{i:normFund} of Lemma \ref{L:fundsProperties} and the definition of $A_t$, we see that $\frac{1}{t N(\mathfrak{a})^{1/n}} \phi$ gives a bijection between the sets $E(\mathfrak{a},t^n)$ and $A_t \cap \F(H,\left(0,1\right],v )$.
        Moreover, multiplying elements of $\mathfrak{a}$ with a unit $\varepsilon\in \mathcal{O}_K$ corresponds to the action of $\Psi_{\phi(\varepsilon)}$ on $A_t$. Our statement now follows from part \eqref{i:unitFund} of Lemma \ref{L:fundsProperties}.
    \end{proof}

    \begin{remark}
        It might seem more natural to state that
        $$
        \sigma_K(t,\mathcal{C})=\frac{1}{\omega_K}\vert \phi(\mathfrak{a}) \cap \F(D,\left(0,N(\mathfrak{a}) t \right], v)  \vert,
        $$
        since this corresponds more directly to the definition of $E(\mathfrak{a},t)$. However, with our normalisation, the domain $\F(D,\left(0,1\right],v )$ becomes independent of both $\mathfrak{a}$ and the parameter $t$. This is very convenient, since we will spend a lot of time analysing this domain.
    \end{remark}

    Now we show that we can replace the fundamental domain $D$ in Corollary \ref{C:fundDoms} with a smaller domain $\tilde D \subseteq H$ by invoking Schmidt's partition trick. 

	\begin{lemma}[Schmidt's partition trick]\label{L:Schmidt}
		Let $\Lambda \subseteq \mkw$ be a lattice. Let $D,\tilde D \subseteq H$ be subsets together with a set $\mathcal{S}$ and elements $y_s \in H$ for every $s\in \mathcal{S}$, such that $D=\bigsqcup_{s\in S} \tau_{y_s} \tilde{D}$. Let $\beta_s \in \mkw$, such that $\ell(\beta_s)=-y_s$. Then
		$$
		\vert \Lambda \cap \F(D,I,v) \vert = \sum_{s\in \mathcal{S}} \vert \Psi_{\beta_s} \Lambda \cap \F(\tilde{D},I,v) \vert .
		$$
	\end{lemma}

    \begin{remark}
        This trick was developed by Wolfgang Schmidt \cite{schmidt_northcotts_1995} for the real quadratic case. Widmer \cite{Widmer}  generalised this to partitioning the fundamental parallelepiped of the unit lattice in any degree, which was also used by Debaene \cite{Korneel}. We now observe that we can do the same trick for a larger range of domains $D\subseteq H$.
    \end{remark}

	\begin{proof}
		In the setting above, it is clear that we have
		$$
		\F(D,I,v)=\bigsqcup_{s\in S} \F(\tau_{y_s} \tilde{D}, I,v).
		$$
		Using Lemma \ref{L:TransDia}, we can write $\F(\tau_{y_s} \tilde{D}, I,v)=\Psi_{\beta_s}^{-1} \F( \tilde{D}, I,v)$, and thus
		\begin{alignat*}{2}
			 \vert \Lambda \cap \F(D,I,v) \vert &= \sum_{s\in \mathcal{S}} \vert \Lambda \cap \F(\tau_{y_s} \tilde{D},I,v) \vert \\
			 &= \sum_{s\in \mathcal{S}} \vert \Lambda \cap \Psi_{\beta_s}^{-1} \F(\tilde{D},I,v) \vert \\
			 &= \sum_{s\in \mathcal{S}} \vert \Psi_{\beta_s} \Lambda \cap  \F(\tilde{D},I,v) \vert .
		\end{alignat*}
	\end{proof}
	
	\subsection{Specific choices}\label{Ss:SpecChoice}
	We choose an LLL-reduced basis $l_1,\dots,l_r$ of the unit lattice $\Gamma$ and consider the Gram-Schmidt orthogonalised basis $e_1,\dots , e_r$. By Lemma \ref{L:LLL}, we thus have $\|l_i\| \leq 2^{\frac{i-1}{2}} \|e_i \|$.
	
	For some $c\geq 1$, satisfying $2\pi c \in \mathbb{N}$, we define
	$$\delta_i :=  \lceil c \|e_i\| \rceil $$
	for $i=1,\dots,r$.
    
    \begin{remark}
        We will make a concrete choice for $c$ in Lemma \ref{L:ugly}, but until then it is not clear, why the choice we make there is good. Therefore, we will not assume this specific choice until Section \ref{S:main}. To be able to make some slight simplifications of some terms (for example in the proof of Corollary \ref{C:LipConstF}), it will be useful to have some lower bound on the size of $c$. This is why we, for now, work with $c\geq 1$. Moreover, it will be very convenient later to assume that $2\pi c \in \mathbb{N}$, so that we can divide the interval $\left[0, 2\pi \right)$ evenly into subintervals of length $c^{-1}$ (see Definition \ref{D:G}).
    \end{remark}
	
	Collecting the $\delta_i$ into $\delta =(\delta_1,\dots, \delta_r) \in \mathbb{Z}^r$, we define
	$$D_{\delta} := \left\{ \sum_{i=1}^r \lambda_i  e_i \biggm \vert \lambda_i \in \left[ -\frac{1}{2\delta_i}, \frac{1}{2\delta_i} \right)\right\}.$$
	
	Moreover, we let
	$\mathcal{S}=\prod_{i=1}^{r} ( \mathbb{Z} \cap \left[0,\delta_i \right))$,
	and, for $s\in \mathcal{S}$, we define
	$y_s := \sum_{i=1}^{r} \frac{s_i}{\delta_i} e_i \in H$. 
	Then we have that
	$$\bigsqcup_{s\in S} \tau_{y_s} D_{\delta} = \left\{ \sum_{i=1}^r \lambda_i  e_i \biggm \vert \lambda_i \in \left[-\frac{1}{2\delta_i} , 1 -\frac{1}{2\delta_i} \right) \right\}$$
	is a fundamental domain of $\Gamma$.
    We let $\beta_s:=\left( \exp\left(\frac{-y_j}{n_j} \right)\right)_{j=1}^{r_1+r_2}\in \mkw$. Notice that $\ell(\beta_s)=-y_s$.
	Finally, we let $\nu:=\left(\frac{1}{r_1+r_2}\right)_{i=1}^{r_1+r_2}\in \lsp$ satisfying $\Nl(\nu)=1$ and $\|\nu\|=\frac{1}{\sqrt{r_1+r_2}}$.
	
	Now Corollary \ref{C:fundDoms} and Lemma \ref{L:Schmidt} together yield the following result.
	\begin{proposition}\label{P:SigLat}
        For all $t>0$, we have
		$$
		\sigma_K(t^n,\mathcal{C}) = \frac{1}{\omega_K} \sum_{s \in \mathcal{S}} \vert (\Psi_{\beta_s} A_t) \cap \F( D_{\delta},\left(0,1\right],\nu) \vert.
		$$
	\end{proposition}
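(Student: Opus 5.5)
The plan is to combine Corollary \ref{C:fundDoms} and Lemma \ref{L:Schmidt} with the specific choices of Subsection \ref{Ss:SpecChoice}.

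First, I will check that the set
$$D:=\bigsqcup_{s\in \mathcal{S}} \tau_{y_s} D_{\delta} = \left\{ \sum_{i=1}^r \lambda_i e_i \biggm\vert \lambda_i \in \left[-\frac{1}{2\delta_i}, 1-\frac{1}{2\delta_i}\right) \right\}$$
is a fundamental domain of $\Gamma$, and that the union is disjoint.

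Disjointness and the description of the union are immediate coordinatewise. For each $i$, the intervals
$$\left[\frac{s_i}{\delta_i}-\frac{1}{2\delta_i},\ \frac{s_i}{\delta_i}+\frac{1}{2\delta_i}\right), \qquad s_i=0,\dots,\delta_i-1,$$
tile $\left[-\frac{1}{2\delta_i},1-\frac{1}{2\delta_i}\right)$. Here $\delta_i\geq 1$, because $c\geq1$ and $e_i\neq 0$.

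Next, $D$ is a translate of the Gram--Schmidt box $\left\{\sum \mu_i e_i \mid \mu_i\in[0,1)\right\}$ by the vector $-\sum_i \frac{1}{2\delta_i}e_i \in H$. The box is a fundamental domain of $\Gamma$ by the Example in Section 2, and a translate of a fundamental domain is again one. The domain $D$ is measurable and lies in $H=\operatorname{Span}_\R\Gamma$, since $\Gamma$ has rank $r=\dim H$.

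I will then apply Corollary \ref{C:fundDoms} with this $D$ and $v=\nu$, which is allowed since $\Nl(\nu)=1$. This gives
$$\sigma_K(t^n,\mathcal{C})=\frac{1}{\omega_K}\vert A_t\cap \F(D,(0,1],\nu)\vert.$$

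Finally, I will apply Lemma \ref{L:Schmidt} with $\Lambda=A_t$, $\tilde D=D_\delta$, the finite index set $\mathcal{S}$, the vectors $y_s\in H$, and the elements $\beta_s$. This requires $\ell(\beta_s)=-y_s$. Writing $y_s=(y_{s,j})_j$, the $j$-th coordinate of $\ell(\beta_s)$ is
$$n_j\log\exp\!\left(-\frac{y_{s,j}}{n_j}\right)=-y_{s,j}.$$
Here $\beta_s\in\mkw^\times$, and each complex coordinate is taken as a positive real number. The lemma then rewrites the count as
$$\sum_{s\in\mathcal{S}} \vert \Psi_{\beta_s}A_t\cap\F(D_\delta,(0,1],\nu)\vert,$$
which is the claim.

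There is no real obstacle here. The only points needing care are the fundamental-domain verification and the sign convention $\ell(\beta_s)=-y_s$, which is already checked in the text preceding the proposition.
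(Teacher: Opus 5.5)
Your proposal is correct and follows the paper's route: the paper obtains the proposition by combining Corollary \ref{C:fundDoms}, applied to the fundamental domain $\bigsqcup_{s\in\mathcal{S}}\tau_{y_s}D_\delta$ of $\Gamma$ with $v=\nu$, and Lemma \ref{L:Schmidt}, applied with $\Lambda=A_t$ and $\tilde D=D_\delta$. Your additional checks (the coordinatewise tiling, $\delta_i\geq 1$, and $\ell(\beta_s)=-y_s$) spell out what the paper only asserts in Subsection \ref{Ss:SpecChoice}.
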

	
	\begin{lemma}\label{L:SBound}
    The set $\mathcal{S}$ satisfies
		$$
        c^r \sqrt{r_1+r_2} R_K \leq \vert \mathcal{S} \vert \leq \exp\left(\frac{14 n 2^{\frac{r}{2}}}{c} \right)  c^r \sqrt{r_1+r_2} R_K.
        $$
	\end{lemma}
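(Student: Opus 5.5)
We have $|\mathcal S| = \prod_{i=1}^r \delta_i$ with $\delta_i = \lceil c\|e_i\|\rceil$. Since the $e_i$ are the Gram--Schmidt vectors of a basis of $\Gamma$, we have $\prod_{i=1}^r \|e_i\| = \vol(\Gamma) = \sqrt{r_1+r_2}\,R_K$. The lower bound is then immediate: $\delta_i \ge c\|e_i\|$ for each $i$, so $|\mathcal S| \ge c^r \prod \|e_i\| = c^r\sqrt{r_1+r_2}\,R_K$.

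For the upper bound, write $\delta_i \le c\|e_i\| + 1 = c\|e_i\|\left(1 + \frac{1}{c\|e_i\|}\right)$. This gives
$$
|\mathcal S| \le c^r\sqrt{r_1+r_2}\,R_K \prod_{i=1}^r\left(1+\frac{1}{c\|e_i\|}\right) \le c^r\sqrt{r_1+r_2}\,R_K\,\exp\left(\frac1c\sum_{i=1}^r \frac{1}{\|e_i\|}\right).
$$
It therefore suffices to show $\sum_{i=1}^r \|e_i\|^{-1} \le 14\, n\, 2^{r/2}$. This lower bound on the Gram--Schmidt lengths is the main step.

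To get it, I combine Lemma \ref{L:LLL}, namely $\|e_i\| \ge 2^{-(i-1)/2}\|l_i\|$, with a uniform lower bound on the lengths of nonzero vectors of $\Gamma$. Each $l_i$ is a nonzero element of $\Gamma$, so $\|l_i\| \ge \lambda_1(\Gamma)$. A classical lower bound for logarithmic heights of non-torsion units is needed here, and I would aim for an elementary one of the form $\|\ell(\phi(\varepsilon))\| \ge c_0/n$ for an absolute constant $c_0$. One route is to note that a unit $\varepsilon$ of infinite order has house $\max_j|\sigma_j(\varepsilon)| > 1$, and a crude Dobrowolski/Blanksby--Montgomery-type bound gives $\log$ house $\gg 1/n^{2}$ or so. A cleaner route with a good constant uses $\sum_j |\log|\sigma_j\varepsilon||$ together with Schinzel/Kronecker-type arguments; for instance, the bound $\log M(\varepsilon) \ge \frac{1}{4n}(\log\log n/\log n)^3$ would be more than enough. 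Whatever form is used, the target is $\|l_i\| \ge 1/(c_1 n)$ with an explicit $c_1$.

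Granting this, summing the geometric series gives
$$
\sum_{i=1}^r \|e_i\|^{-1} \le c_1 n\sum_{i=1}^r 2^{(i-1)/2} \le c_1 n\,\frac{2^{r/2}}{\sqrt2-1} \approx 2.42\,c_1\, n\, 2^{r/2},
$$
so $c_1 \approx 5.8$ yields the constant $14$. The obstacle is securing an explicit, clean lower bound for $\lambda_1(\Gamma)$ of order $1/n$ with a small constant, valid for all $n \ge 2$ including small degrees. I would first try the Blanksby--Montgomery or Voutier explicit Lehmer-type bound, converting the Mahler-measure bound to the $\ell^2$ norm via $\|\ell(\phi(\varepsilon))\|_2 \ge \frac{2}{\sqrt{n}}\log M(\varepsilon)\cdot(\ldots)$. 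The conversion uses $\sum_i \ell_i = 0$, so the positive parts sum to $\log M$. If that turns out too weak, I would fall back on a cruder bound and absorb the loss into the exponent.
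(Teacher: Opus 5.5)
Your reduction is the same as the paper's. The lower bound uses $\delta_i\ge c\|e_i\|$ and $\prod\|e_i\|=\vol(\Gamma)$. The upper bound uses $\delta_i\le c\|e_i\|+1$, $\log(1+x)\le x$, Lemma~\ref{L:LLL} and a geometric series. So everything hinges on proving $\|l_i\|\ge 1/(c_1 n)$ for every nonzero $l_i\in\Gamma$, with $c_1\le 14(\sqrt2-1)\approx 5.799$.

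That is exactly the step you leave open, and it is the only nontrivial input. Several of the tools you list cannot supply it:
\begin{itemize}
\item A house bound of order $1/n^2$ is too weak.
\item Blanksby--Montgomery gives $\log M(\varepsilon)\gg 1/(n\log n)$. After your $\ell^1\to\ell^2$ conversion this is only of order $n^{-3/2}/\log n$.
\item A Dobrowolski-type bound carrying the factor $(\log\log n/\log n)^3$ is not ``more than enough''. That factor is at most $e^{-3}<0.05$, and it is negative for $n=2$. Even read as a bound on the logarithm of the house, it gives at best about $1/(80n)$; read as a bound on $\log M$, it gives far less.
\end{itemize}
Your fallback of absorbing the loss into the exponent is also unavailable, because the constant $14$ is part of the statement.

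The paper closes the gap with Dimitrov's theorem (the Schinzel--Zassenhaus conjecture). An algebraic integer of degree at most $n$ that is not a root of unity has a conjugate with $|\sigma_j(\varepsilon)|\ge 2^{1/(4n)}$. Hence $\|l_i\|\ge\max_j\log|\sigma_j(\varepsilon)|\ge\frac{\log 2}{4n}$, so $c_1=4/\log 2\approx 5.77$ and $\frac{4}{\log 2\,(\sqrt2-1)}\approx 13.93\le 14$.

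Your Mahler-measure route can also be completed, but only with the right explicit bound, which you would have to state and check. Take Voutier's $\log M(\varepsilon)\ge 2/(\log 3d)^3$ with $d\le n$. Combine it with the zero-sum identity
\[
\|l_i\|_1=2\sum_{j=1}^n\max\bigl(0,\log|\sigma_j(\varepsilon)|\bigr)\ge 2\log M(\varepsilon)
\]
and with $\|l_i\|\ge\|l_i\|_1/\sqrt{r_1+r_2}$. This gives $\|l_i\|\ge 4/\bigl(\sqrt{n}(\log 3n)^3\bigr)$. The function $(\log 3n)^3/(4\sqrt n)$ is maximised at $\log 3n=6$, where it is below $4.7<5.799$.
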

	
	\begin{proof}
        For the lower bound, we observe that $\delta_i\geq c\|e_i\|$, and hence we obtain
        $$
        \vert S \vert = \prod_{i=1}^r \delta_i \geq \prod_{i=1}^r c\|e_i\| =c^r \vol(\Gamma)=c^r \sqrt{r_1+r_2} R_K.
        $$
		To show the upper bound, we use that we chose $l_1,\dots,l_r$ LLL-reduced, i.e., we have $\|e_i\| \geq 2^{\frac{1-i}{2}} \|l_i\|$ by Lemma \ref{L:LLL}. Recall that $l_i = \ell\circ \phi(\varepsilon)$ for a unit $\varepsilon \in \mathcal{O}_K$, which is not a root of unity. We have $\|l_i\| \geq \max_j \log(\vert \sigma_j(\varepsilon)\vert)$, and moreover, $\max_j \vert \sigma_j(\varepsilon) \vert \geq 2^{1/4n}$ is due to Dimitrov \cite{Modulus}. Hence,
		$$
		\|e_i\| \geq 2^{\frac{1-i}{2}} \| l_i \| \geq 2^{\frac{1-i}{2}} \frac{\log(2)}{4n}.
		$$
        With this we can bound
        $\delta_i = \lceil c \|e_i\| \rceil$ from above,
		$$\delta_i \leq c \|e_i\|+1\leq \left( 1+ \frac{2^{\frac{i-1}{2}} 4n}{ c  \log(2) } \right) c \|e_i\|.$$
		
        The last step is to bound the product $\prod_{i=1}^{r} \left( 1+ \frac{2^{\frac{i-1}{2}} 4n}{ c  \log(2) } \right)$. Taking the logarithm we get
		$$
			\sum_{i=1}^r \log \left( 1+ \frac{2^{\frac{i-1}{2}} 4n}{ c  \log(2) } \right) \leq \sum_{i=1}^r \frac{2^{\frac{i-1}{2}} 4n}{ c  \log(2) }
			= \frac{ 4n}{ c  \log(2) } \frac{2^{\frac{r}{2}} - 1}{ \sqrt{2} - 1}
			\leq \frac{14 n 2^{\frac{r}{2}}}{c},
		$$
	and together with $\prod_{i=1}^r \|e_i\| = \vol \Gamma = \sqrt{r_1+r_2} R_K$ this finishes the proof.
	\end{proof}
	
	\begin{remark}\label{R:Sbound}
		This lemma is a good example of why it is important that we generalised the Schmidt partition in the way we did. If we had used the $l_1,\dots,l_r$ to define the domain $\F( D_{\delta},\left(0,1\right],\nu)$, we would not have $\prod_{i=1}^r \|l_i\| = \vol \Gamma$. Therefore, when estimating the size of $\mathcal{S}$, we would have to compensate with an orthogonality defect, i.e., a factor of $\frac{r^{\frac{3}{2} r}}{(2\pi)^{\frac{r}{2}}}$ (compare to Lemma 9 from \cite{Korneel}). Moreover, being able to choose the value $c$ means that we can get a very precise estimation for the size of $\mathcal{S}$.
	\end{remark}

	\section{Lipschitz class}\label{S:Lipschitz}
	
	Now we turn our attention to our fundamental domain $\F ( D_{\delta}, \left( 0,1\right], \nu )$. our goal is to compute the Lipschitz class of the boundary $\partial \F ( D_{\delta}, \left( 0,1\right], \nu )$, which we need to apply our counting method, Proposition \ref{P:LatLipCounting}.

    \subsection{An isomorphism}
    
	To make the analysis easier, we would like to be able to go back and forth between our two important spaces, the Minkowski space $\mkw$ and the logarithmic space $\lsp$. However, the map $\ell: \mkw^\times \to \lsp$ is not bijective. To remedy this situation, we define the following objects.
	
	We let
	$$
	\mkw^+ := (\R^+)^{r_1} \times \C^\times,
	$$
    which is a subgroup of $\mkw^\times$, and we let
	$$
	\F(a,b) := \F ( D_{\delta}, \left( a,b\right], \nu ) \cap \dmk.
	$$
	\begin{remark}\label{R:+}
	    Observe that  $\F ( D_{\delta}, \left( a,b\right], \nu )$ consists of $2^{r_1}$ copies of $\F(a,b)$, i.e., $\vol \F ( D_{\delta}, \left( a,b\right], \nu )=2^{r_1}\vol \F(a,b)$. If $\partial\F(a,b)$ is of Lipschitz class $\operatorname{Lip}(N,M,L)$, then $\partial \F ( D_{\delta}, \left( a,b\right], \nu )$ is of Lipschitz class $\operatorname{Lip}(N,2^{r_1}M,L)$.
	\end{remark}

    We also want to keep track of the argument of the complex coordinates in $\mkw$. We therefore let 
    $$\T:=(\R/2\pi\Z)^{r_2}$$
    and refine $\ell$ to the map
	\begin{alignat*}{2}
		f : \dmk &\rightarrow \lsp \times \T, \\
		(x_i) &\mapsto (\ell(x),\arg(x_{r_1+1}),\dots,\arg(x_{r_1+r_2})).
	\end{alignat*}

	The map $f$ is, in fact, an isomorphism of groups with inverse
	\begin{alignat*}{2}
		g : \lsp \times \T &\rightarrow  \dmk, \\
		(x  , \theta) &\mapsto \left( \exp(x_1), \dots , \exp(x_{r_1}),\exp\left(\frac{x_{r_1+1}}{2} + i \theta_1 \right) ,\dots , \exp\left(\frac{x_{r_1+r_2}}{2} + i \theta_{r_2}\right) \right).
	\end{alignat*}
	We let $\E(a,b):=\E ( D_{\delta}, \left( a,b\right], \nu )$ and
    $$
    \G(a,b):=\E (  a,b) \times \T,
    $$
   such that  $\F(a,b)$ and $\G(a,b)$ are in bijection under $f$ and $g$, i.e., $\G(a,b)=f(\F(a,b))$ and $\F(a,b)=g(\G(a,b))$.

	\begin{lemma}\label{L:Jac}
		For $(x,\theta)\in \lsp \times \T$, the Jacobian $Dg(x,\theta)$ satisfies
		$$\vert \det (Dg(x,\theta)) \vert = 2^{-r_2}\exp\left(\sum_{i=1}^{r_1+r_2} x_i \right) = 2^{-r_2} \exp( \Nl (x))$$
		and
		$$\| Dg(x,\theta)\|_2 = \max_i \exp\left(\frac{x_i}{n_i}\right) .$$
        Here, $\|\cdot\|_2$ denotes the spectral norm.
	\end{lemma}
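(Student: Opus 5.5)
The map $g$ acts coordinate by coordinate, so I will compute $Dg(x,\theta)$ directly and observe that it is block diagonal with respect to a suitable ordering of the variables and of the real coordinates of $\mkw$. I identify $\mkw=\R^{r_1}\times\C^{r_2}$ with $\R^n$, writing each complex coordinate as a pair (real part, imaginary part), and I order the $n$ source variables as $x_1,\dots,x_{r_1}$ followed by the pairs $(x_{r_1+j},\theta_j)$ for $j=1,\dots,r_2$. Each output coordinate of $g$ then depends on exactly one of these groups, so $Dg$ has $r_1$ blocks of size $1\times1$ and $r_2$ blocks of size $2\times2$.

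\textbf{Real blocks.} For $i\le r_1$ the block is $\frac{d}{dx_i}\exp(x_i)=\exp(x_i)$. Its determinant is $\exp(x_i)$ and its spectral norm is $\exp(x_i/n_i)$, since $n_i=1$.

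\textbf{Complex blocks.} For the $j$-th complex coordinate set $\rho=\exp(x_{r_1+j}/2)$. The map $(x,\theta)\mapsto(\rho\cos\theta,\rho\sin\theta)$ has Jacobian
$$\begin{pmatrix}\tfrac{\rho}{2}\cos\theta & -\rho\sin\theta\\ \tfrac{\rho}{2}\sin\theta & \rho\cos\theta\end{pmatrix}=\begin{pmatrix}\cos\theta&-\sin\theta\\ \sin\theta&\cos\theta\end{pmatrix}\begin{pmatrix}\rho/2&0\\0&\rho\end{pmatrix}.$$
Its determinant is $\rho^2/2=\tfrac12\exp(x_{r_1+j})$. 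Since rotations are orthogonal, its singular values are $\rho/2$ and $\rho$, so its spectral norm is $\rho=\exp(x_{r_1+j}/n_{r_1+j})$, using $n_{r_1+j}=2$.

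\textbf{Combining the blocks.} The determinant of a block diagonal matrix is the product of the block determinants:
$$\vert\det Dg(x,\theta)\vert=\prod_{i\le r_1}e^{x_i}\prod_{j}\tfrac12 e^{x_{r_1+j}}=2^{-r_2}\exp(\Nl(x)).$$
The spectral norm of a block diagonal matrix is the maximum of the blocks' spectral norms, which gives $\|Dg(x,\theta)\|_2=\max_i\exp(x_i/n_i)$.

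The only delicate point is the spectral norm of the $2\times2$ blocks. Because the two partial derivatives there have different scalings ($\tfrac12$ versus $1$), I use the factorisation into a rotation times a diagonal matrix to read off the singular values. I also note that reordering coordinates is an orthogonal change of basis, so it affects neither the absolute value of the determinant nor the spectral norm.
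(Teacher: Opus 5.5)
Your proof is correct and is essentially the paper's argument: the paper factors the full Jacobian as $Dg(x,\theta)=AB$, with $A$ orthogonal and $B$ diagonal with entries $e^{x_i}$ ($i\le r_1$), $\tfrac12 e^{x_{r_1+j}/2}$ and $e^{x_{r_1+j}/2}$, which is exactly your blockwise rotation-times-diagonal factorisation assembled into one matrix. Incidentally, your $\theta$-derivatives $-\rho\sin\theta$ and $\rho\cos\theta$ have the correct signs, whereas the paper's displayed matrix has them negated; this is harmless, since negating columns changes neither $\vert\det\vert$ nor the singular values.
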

    \begin{proof}
        
	The map $g$ has Jacobian
	\begin{equation*}
		Dg(x,\theta)= \begin{pmatrix} 
			  e^{x_1} & & 0 & & & & & &\\
			& \hspace{-6pt} \ddots \hspace{-4pt} & & & &  & & &\\
			0 & &  e^{x_{r_1}} & & & & & &\\
			& & & \frac{\cos(\theta_1)}{2} e^{\frac{x_{r_1+1}}{2}} & & & \sin(\theta_1) e^{\frac{x_{r_1+1}}{2}} & &\\
			& & & \frac{\sin(\theta_1)}{2} e^{\frac{x_{r_1+1}}{2}} &  & 0 & -\cos(\theta_1) e^{\frac{x_{r_1+1}}{2}}& & 0\\
			&  & & & \hspace{-8pt} \ddots \hspace{-2pt} & & & \hspace{-8pt} \ddots \hspace{-2pt} &\\
			& & & 0 & & \frac{\cos(\theta_{r_2})}{2} e^{\frac{x_{r_1+r_2}}{2}} &  0 & & \sin(\theta_{r_2}) e^{\frac{x_{r_1+r_2}}{2}}\\
			& & & & & \frac{\sin(\theta_{r_2})}{2} e^{\frac{x_{r_1+r_2}}{2}} & & & -\cos(\theta_{r_2}) e^{\frac{x_{r_1+r_2}}{2}}\\
		\end{pmatrix},
	\end{equation*}
		which we can factor as $Dg(x,\theta)=A B$, where
		\begin{equation*}
			A= \begin{pmatrix} 
				1& & 0 & & & & & &\\
				& \ddots & & & & & & &\\
				 0 & & 1 & & & & & &\\
				& & & \cos(\theta_1) & & & \sin(\theta_1)& &\\
				& & & \sin(\theta_1) & & 0 & -\cos(\theta_1) & & 0\\
				& & & & \ddots & & & \ddots &\\
				& & &  0 & & \cos(\theta_{r_2})& 0 & & \sin(\theta_{r_2})\\
				& & & & & \sin(\theta_{r_2}) & & & -\cos(\theta_{r_2}) \\
			\end{pmatrix}
		\end{equation*}
	is orthogonal and 
	\begin{equation*}
		B= \begin{pmatrix} 
			\exp({x_1}) & & & & & & & &\\
			& \hspace{-7pt}\ddots & & & & & & &\\
			& & \hspace{-7pt}\exp({x_{r_1}}) & & & & 0 & &\\[7pt]
			& & & \hspace{-7pt} \frac{1}{2} \exp\left({\frac{x_{r_1+1}}{2}}\right) & & &  & &\\
			& & & & \hspace{-14pt}\ddots & & & &\\
			& & & & & \hspace{-14pt}\frac{1}{2} \exp \left({\frac{x_{r_1+r_2}}{2}}\right) & & & \\[7pt]
			& & 0 & & & & \hspace{-7pt} \exp\left( {\frac{x_{r_1+1}}{2}}\right) & & \\
			& & & & & & & \hspace{-10pt} \ddots  & \\
			& & & & & & & & \hspace{-10pt}\exp \left({\frac{x_{r_1+r_2}}{2}}\right) 
		\end{pmatrix}
	\end{equation*}
    is in diagonal form. The statement readily follows.
	\end{proof}

	We can use this result to show the following identity.
	\begin{proposition}\label{P:volF}
		We have $\vol(\F(0,1))=\frac{\pi^{r_2} R_K}{\vert \mathcal{S} \vert} $.
	\end{proposition}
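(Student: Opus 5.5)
We compute the volume via the change of variables $g$, using $\F(0,1)=g(\G(0,1))$ with $\G(0,1)=\E(0,1)\times\T$. Since $g$ is a diffeomorphism onto $\dmk$ with the Jacobian determinant of Lemma \ref{L:Jac}, we get
$$
\vol(\F(0,1))=\int_{\E(0,1)\times\T} 2^{-r_2}\exp(\Nl(x))\,dx\,d\theta = 2^{-r_2}(2\pi)^{r_2}\int_{\E(0,1)} \exp(\Nl(x))\,dx = \pi^{r_2}\int_{\E(0,1)} \exp(\Nl(x))\,dx .
$$
It then remains to show that $\int_{\E(0,1)}\exp(\Nl(x))\,dx = R_K/\vert\mathcal{S}\vert$.

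For this we parametrise $\E(0,1)=\{h+\mu\nu \mid h\in D_\delta,\ \mu\le 0\}$ by the linear map $(h,\mu)\mapsto h+\mu\nu$ from $H\times\R$ to $\lsp$. Since $\nu=\frac{1}{r_1+r_2}(1,\dots,1)$ is orthogonal to $H$, this map scales volume by $\|\nu\|=(r_1+r_2)^{-1/2}$, where volume on $H$ is the $r$-dimensional Lebesgue measure. Moreover, $\Nl(h+\mu\nu)=\mu$. Fubini then gives
$$
\int_{\E(0,1)} e^{\Nl(x)}\,dx=\frac{1}{\sqrt{r_1+r_2}}\,\vol_H(D_\delta)\int_{-\infty}^0 e^{\mu}\,d\mu=\frac{\vol_H(D_\delta)}{\sqrt{r_1+r_2}}.
$$

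Finally, $D_\delta$ is the orthogonal box spanned by the vectors $e_i/\delta_i$, so $\vol_H(D_\delta)=\prod_i \|e_i\|/\delta_i=\vol(\Gamma)/\vert\mathcal{S}\vert=\sqrt{r_1+r_2}\,R_K/\vert\mathcal{S}\vert$. Here we use $\vert\mathcal{S}\vert=\prod_i\delta_i$ and $\vol(\Gamma)=\sqrt{r_1+r_2}R_K$, as in the proof of Lemma \ref{L:SBound}. Combining the three steps yields $\vol(\F(0,1))=\pi^{r_2}R_K/\vert\mathcal{S}\vert$.

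The only delicate point is the normalisation of the measures: the Jacobian factor $\|\nu\|$ for the oblique coordinates $(h,\mu)$, and the fact that $\T$ has total measure $(2\pi)^{r_2}$. The apparent singularity as $\mu\to-\infty$ is harmless, since the integrand $e^\mu$ is integrable there. We also need $g$ to be a bijection onto $\dmk$, which holds here because $\dmk$ has positive real coordinates and the $\theta$-coordinates are taken mod $2\pi$.
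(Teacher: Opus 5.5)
Your proposal is correct and follows the paper's proof essentially step for step: the change of variables through $g$ with the Jacobian from Lemma \ref{L:Jac}, the oblique substitution $(h,\mu)\mapsto h+\mu\nu$ with determinant $\|\nu\|=(r_1+r_2)^{-1/2}$, and the identity $\vol(D_\delta)=\prod_i\|e_i\|/\delta_i=\sqrt{r_1+r_2}\,R_K/\vert\mathcal{S}\vert$. The only difference is cosmetic: you compute $\vol(D_\delta)$ at the end, whereas the paper computes it first.
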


    \begin{remark}
		  With this, we can already see, how the value $\kappa_K$ arises geometrically. We have
        $\sigma_K(t^n)=\sum_{\mathcal{C}} \sigma_K(t^n,\mathcal{C})$, and, by Corollary \ref{C:fundDoms}, we know that $\sigma_K(t^n,\mathcal{C})=\frac{1}{\omega_K} \vert A_t \cap \F(D,\left(0,1\right],\nu ) \vert$, where $A_t$ is a lattice of volume $\frac{\sqrt{\Delta_K}}{2^{r_2} t^n}$. 
        Using the above proposition, we obtain
        $$\vol \F(D,\left(0,1\right],\nu)=2^{r_1}\vert \mathcal{S} \vert \vol \F(0,1) =2^{r_1}\pi^{r_2} R_K,$$
        so we can expect $\sigma_K(t^n)$ to grow like
        $$
        \frac{h_K}{\omega_K}\frac{\vol \F(D,\left(0,1\right],\nu )}{\vol A_t}=\frac{2^{r_1+r_2} \pi^{r_2} h_K R_K}{\omega_K \sqrt{\Delta_K}}t^n = \kappa_K t^n.
        $$
	\end{remark}
    
	\begin{proof}[Proof of Proposition \ref{P:volF}]
    We can easily calculate the volume of $D_{\delta}$. This is because $D_{\delta}$ is an $r$-dimensional cuboid with sidelengths $\frac{\|e_i\|}{\delta_i}$, and, since the $e_i$ span an orthogonal fundamental domain of the unit lattice, we have
		$$\vol(D_{\delta})=\frac{\prod \|e_i\|}{\prod \delta_i}=\frac{\sqrt{r_1+r_2} R_K}{\vert \mathcal{S} \vert}.$$
		Now we can use the integral transformation formula and Lemma \ref{L:Jac} to compute
		\begin{alignat*}{2}
			\vol(\F(0,1))&=\vol(g(\G(0,1))) \\
			&= \int_{g(\G(0,1))} 1 \;\mathrm{d}x \\
			&= \int_{\G(0,1)} \vert \det(Dg(y)) \vert \; \mathrm{d}y\\
			&= (2\pi)^{r_2} \int_{\E(0,1)} 2^{-r_2} \exp(\Nl(y)) \; \mathrm{d}y\\
			&\overset{(\ast)}{=}\pi^{r_2}  \int_{-\infty}^0 \frac{1}{\sqrt{r_1+r_2}} \int_{D_{\delta}}   \exp(\Nl(h+\lambda  \nu)) \; \mathrm{d}h \; \mathrm{d}\lambda\\
			&=\pi^{r_2}\int_{-\infty}^0 \frac{1}{\sqrt{r_1+r_2}} \vol(D_{\delta})  \exp(\lambda) \; \mathrm{d}\lambda\\
			&= \pi^{r_2} \frac{\sqrt{r_1+r_2} R_K}{ \sqrt{r_1+r_2}\vert \mathcal{S} \vert}\\
			&= \frac{\pi^{r_2} R_K}{\vert \mathcal{S} \vert}.
		\end{alignat*}
		
		In step $(\ast)$ we make the transformation  $ (h,\lambda ) \mapsto h + \lambda \nu=y$, which has determinant $\|\nu\|=\frac{1}{\sqrt{r_1+r_2}}$.
	\end{proof}

	\begin{lemma}\label{L:gLip}
		For all $x,y \in \G(0,1)$, we have
		$$
		\|g(x)- g(y)\| \leq \exp \left( \frac{\sqrt{r}}{2c} \right)  \| x-y \|.
		$$
	\end{lemma}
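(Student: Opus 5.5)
The plan is to combine the mean value inequality with the spectral norm bound from Lemma~\ref{L:Jac}. We work with representatives of the angles $\theta\in\T$ in $\R^{r_2}$, and read $\|x-y\|$ as the norm of the difference of suitably chosen lifts. Since $g$ is $2\pi$-periodic in each $\theta_j$, we may take these lifts to minimise $\|x-y\|$. For $x=(a,\theta)$ and $y=(b,\theta')$ we then consider the straight segment $\gamma(s)=(1-s)x+sy$, $s\in[0,1]$, in $\lsp\times\R^{r_2}$. This gives
$$\|g(x)-g(y)\|\le \sup_{s\in[0,1]}\|Dg(\gamma(s))\|_2\,\|x-y\|.$$
So it suffices to show that $\|Dg\|_2\le \exp\left(\frac{\sqrt r}{2c}\right)$ at every point of the segment.

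First we check that the segment stays in $\G(0,1)$, which reduces to convexity of $\E(0,1)$. By definition, $\E(0,1)=\{h+\mu\nu \mid h\in D_\delta,\ \mu\le 0\}$. The decomposition $z=h+\mu\nu$ of $z\in\lsp$ with $h\in H$ is linear, and both $D_\delta$ (a box in $H$) and $(-\infty,0]$ are convex. Hence $\E(0,1)$ is convex, and so is $\E(0,1)\times\R^{r_2}$. The angular coordinates impose no constraint.

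Next we bound the spectral norm. By Lemma~\ref{L:Jac}, $\|Dg(z,\theta)\|_2=\max_i \exp(z_i/n_i)$. Write $z=h+\mu\nu$ with $h\in D_\delta$ and $\mu\le0$. Then $z_i=h_i+\mu/(r_1+r_2)\le h_i\le \|h\|$. Since $h=\sum\lambda_j e_j$ with the $e_j$ orthogonal and $|\lambda_j|\le \frac{1}{2\delta_j}$,
$$\|h\|^2=\sum_{j=1}^r\lambda_j^2\|e_j\|^2\le\sum_{j=1}^r\frac{\|e_j\|^2}{4\delta_j^2}\le \frac{r}{4c^2},$$
where the last step uses $\delta_j=\lceil c\|e_j\|\rceil\ge c\|e_j\|$. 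Therefore $z_i/n_i\le \max(z_i,0)\le \frac{\sqrt r}{2c}$ for every $i$, which gives the required bound on $\|Dg\|_2$.

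The main subtlety is the interpretation of the norm on the torus factor, namely making sure the segment argument is legitimate across the identification $\R/2\pi\Z$. The step I would spell out most carefully is that choosing lifts of the angles that minimise the distance is compatible with $g$, because $g$ depends on $\theta$ only modulo $2\pi$. The rest is routine.
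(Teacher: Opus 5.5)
Your proof is correct and follows essentially the same argument as the paper. You bound $\|g(x)-g(y)\|$ by integrating $Dg$ along the straight segment, use convexity of $\G(0,1)$ to keep the segment inside the region, and bound the spectral norm from Lemma~\ref{L:Jac} by $\exp(\sqrt{r}/(2c))$ using $\mu\le 0$, positivity of $\nu$, and the size of $D_\delta$; your explicit choice of distance-minimising angle lifts on $\T$ just makes precise a point the paper leaves implicit.
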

    \begin{proof}
        Let $\gamma:[0,\|x-y\|] \to \lsp$ be the isometric parametrisation of the direct path from $x$ to $y$. Then $g\circ \gamma$ is a path from $g(x)$ to $g(y)$ and the distance $\|g(x)- g(y)\|$ is therefore bounded by the arc-length of the path $g\circ \gamma$, i.e.,
        $$
        \|g(x)- g(y)\| \leq \int_0^{\|x-y\|} \|D(g\circ\gamma)(u)\| \,\mathrm{d}u =  \int_0^{\|x-y\|} \|D(g)(\gamma(u))  D \gamma(u)\| \,\mathrm{d}u.
        $$
        Since $\gamma$ is isometric, $D \gamma(u)$ has norm one. The last integrand is therefore bounded by the spectral norm $\| Dg(\gamma(u))\|_2$. By construction, $\G(0,1)$ is convex, so $\gamma(u)$ is inside this set, for all $u\in [0,\|x-y\|]$.

        We can thus write $\gamma(u)=(h_u+\mu_u\nu,\theta_u)$ with $h_u \in  D_{\delta}$ and $ \mu_u\leq0$. From Lemma \ref{L:Jac} we know that the spectral norm only depends on the coordinates of $h_u+\mu_u\nu$, i.e.,
        $$
        \| Dg(\gamma(u))\|_2 = \max_i\exp\left( \frac{h_{u,i}+\mu_u \nu_i}{n_i}\right).
        $$
        Since $\mu_u\leq 0$ and all coordinates of $\nu$ are positive, the above is bounded by $\max_i \exp\left(\frac{h_{u,i}}{n_i}\right)\leq \exp \| h_u \|$.

        Now, since $D_{\delta}$ is an $r$-dimensional cuboid centred at zero with sidelengths less than or equal to $c^{-1}$, we get the bound
        $$\| Dg(\gamma(u))\|_2 \leq \exp\left(\frac{\sqrt{r}}{2c}\right),$$
        which gives the desired result.
    \end{proof}

    \subsection{Lipschitz class computation}\label{Ss:LipComp}

    We now show that $\partial  \F( 0,1)$ has a certain Lipschitz class, so that we can apply Proposition \ref{P:LatLipCounting}. As a first step, we will focus our analysis on a small subset of $\F( 0,1)$. Then we can use the correspondence from Lemma \ref{L:TransDia} to obtain a result about the whole domain $\F(0,1)$.

    Recall that $D_{\delta}$ is an $r$-dimensional cuboid with sidelengths less than or equal to $c^{-1}$ and that the vector $\nu =\left(\frac{1}{r_1+r_2}\right)_{i=1}^{r_1+r_2}$ is orthogonal to $D_{\delta}$ in $\lsp$.
    We can therefore find a parameter $0<w<1$, such that
    $\E( w,1)$ is an $(r_1+r_2)$-dimensional cuboid with sidelengths less than or equal to $c^{-1}$.

    An element $x\in\E( w,1)$ can be written as $x=h+\mu\nu$ with $\exp(\mu)\in\left(w,1 \right]$. We want $\mu$ to satisfy $\mu\in \left(-\frac{1}{c\|\nu\|},0 \right]$. Hence, we take $w=\exp \left(-\frac{1}{c\|\nu\|}\right)$.

    We will also divide $\T$ into cubes of sidelength $c^{-1}$. For this, it is very convenient to choose $c$, such that $2\pi c\in \mathbb{N}$.
	
    \begin{definition}\label{D:G}
        We let  $\mathcal{J}:=\prod_{j=1}^{r_2} ( \mathbb{Z} \cap \left[0,2\pi c\right) )$, and, for $\gamma\in \mathcal{J}$, we let
	$$\G_{\gamma}(a,b):=\E(a,b) \times \prod_{j=1}^{r_2} \left[\frac{\gamma_j}{c},\frac{\gamma_j + 1}{c} \right) \subseteq\G(a,b).$$
    \end{definition}
     
    We can easily see that $\partial \G_{\gamma}(w,1)$ is of Lipschitz class $\operatorname{Lip}(n-1,2n,c^{-1})$.
	
	Moreover,
	$$\G(a,b) = \bigsqcup_{\gamma \in \mathcal{J}} \G_{\gamma}(a,b) .$$

    We can use Lemmas \ref{L:gLip} and \ref{L:TransDia} to determine a Lipschitz class for $\partial \F(w^{k+1},w^k)$ for $k\geq0$. To get a result for the boundary of the full domain $\F(0,1)$, the following glueing lemma will be very useful.
    
	\begin{lemma}\label{L:fuseLip}
		Let $\varphi_i : [0,\ell_i] \times [ 0,1]^{N-1} \to \R^m$, for $i=1,2$, be maps, such that
		$\|\varphi_i (x) - \varphi_i(y)\| \leq L_i \|x-y\|$ with $L_1\geq L_2 > 0$, and $\varphi_1(\ell_1,x_2,\dots,x_N) = \varphi_2(0,x_2,\dots,x_N)$.
		Then there exists a map 
		$$\tilde{\varphi} : \left[0,\ell_1 + \frac{L_2}{L_1} \ell_2 \right] \times [ 0,1]^{N-1} \to \R^m,$$
		such that $\operatorname{im}(\tilde{\varphi}) = \operatorname{im}(\varphi_1) \cup \operatorname{im}(\varphi_2)$, and $\| \tilde\varphi (x) - \tilde\varphi (y)\| \leq L_1 \|x-y\|$.
	\end{lemma}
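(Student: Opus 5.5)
The construction is explicit: rescale the first coordinate of $\varphi_2$ so that its Lipschitz constant rises to $L_1$, then concatenate. Set $\ell_2' := \frac{L_2}{L_1}\ell_2$ and define
$$
\tilde\varphi(x_1,x_2,\dots,x_N) := \begin{cases} \varphi_1(x_1,x_2,\dots,x_N), & 0\le x_1\le \ell_1,\\ \varphi_2\!\left(\frac{L_1}{L_2}(x_1-\ell_1),x_2,\dots,x_N\right), & \ell_1\le x_1\le \ell_1+\ell_2'. \end{cases}
$$
The hypothesis $\varphi_1(\ell_1,x')=\varphi_2(0,x')$ makes the two cases agree on the hyperplane $x_1=\ell_1$, so $\tilde\varphi$ is well defined. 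Its image is clearly $\operatorname{im}(\varphi_1)\cup\operatorname{im}(\varphi_2)$, because $x_1\mapsto \frac{L_1}{L_2}(x_1-\ell_1)$ maps $[\ell_1,\ell_1+\ell_2']$ bijectively onto $[0,\ell_2]$.

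The Lipschitz bound comes in three cases. If both points lie in the first piece, the bound is immediate from the bound for $\varphi_1$. If both lie in the second piece, write $\psi(x):=(\frac{L_1}{L_2}(x_1-\ell_1),x_2,\dots,x_N)$. Since $L_1/L_2\ge 1$, we have $\|\psi(x)-\psi(y)\|\le \frac{L_1}{L_2}\|x-y\|$, hence $\|\tilde\varphi(x)-\tilde\varphi(y)\|\le L_2\cdot\frac{L_1}{L_2}\|x-y\|=L_1\|x-y\|$. Note that only one coordinate is stretched, and this is precisely where $L_1\ge L_2$ is needed.

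The mixed case, with $x$ in the first piece and $y$ in the second, is the only step needing care. The segment from $x$ to $y$ crosses the hyperplane $x_1=\ell_1$ at some point $z$, and the domain is convex, so $z$ lies in the domain and in both pieces. By the triangle inequality and the two single-piece cases,
$$
\|\tilde\varphi(x)-\tilde\varphi(y)\|\le L_1\|x-z\|+L_1\|z-y\| = L_1\|x-y\|,
$$
where the equality uses collinearity of $x,z,y$. This completes the argument; I expect no real obstacle beyond choosing the intermediate point $z$ on the segment.
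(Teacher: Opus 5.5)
Your proposal is correct and follows the paper's proof essentially verbatim. It uses the same rescaled concatenation $\tilde\varphi$, the same three-case Lipschitz check, and the same intermediate point $z$ on the segment at $x_1=\ell_1$ for the mixed case; your bound $\|\psi(x)-\psi(y)\|\le \frac{L_1}{L_2}\|x-y\|$ is just a compact form of the paper's coordinatewise estimate in Case 2.
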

	
	\begin{proof}
		We define 
		$$
		\tilde{\varphi} (x_1,\dots,x_N) = \begin{cases}
			\varphi_1(x_1,\dots,x_N) &\text{if } x_1\leq \ell_1, \\
			\varphi_2\left(\frac{L_1}{L_2}(x_1 - \ell_1),x_2,\dots,x_N\right) &\text{if } x_1\geq \ell_1.
		\end{cases}
		$$
		Observe that this is welldefined for $x_1=\ell_1$, since $\varphi_1(\ell_1,x_2,\dots,x_N) = \varphi_2(0,x_2,\dots,x_N)$. The property $\operatorname{im}(\tilde{\varphi}) = \operatorname{im}(\varphi_1) \cup \operatorname{im}(\varphi_2)$ is trivial. To show the Lipschitz condition, we distingish between three cases for $x,y \in \left[0,\ell_1 + \frac{L_2}{L_1} \ell_2 \right] \times [ 0,1]^{N-1}$.
		
		\textit{Case 1:} $x_1,y_1 \leq \ell_1$. Clear, since $\tilde{\varphi}$ agrees with $\varphi_1$.
		
		\textit{Case 2:} $x_1,y_1 \geq \ell_1$. We have
		\begin{alignat*}{2}
			\| \tilde\varphi (x) - \tilde\varphi (y) \| &= \left\|\varphi_2\left(\frac{L_1}{L_2}(x_1 - \ell_1),x_2,\dots,x_N\right) - \varphi_2\left(\frac{L_1}{L_2}(y_1 - \ell_1),y_2,\dots,y_N\right) \right\| \\
			& \leq L_2 \left\|\left(\frac{L_1}{L_2}(x_1 - y_1),x_2-y_2,\dots,x_N-y_N\right) \right\| \\
			& = \sqrt{L_1^2 (x_1-y_1)^2 + L_2^2 (x_2-y_2)^2  + \dots L_2^2 (x_N-y_N)^2} \\
			& \leq \sqrt{L_1^2 (x_1-y_1)^2 + L_1^2 (x_2-y_2)^2  + \dots L_1^2 (x_N-y_N)^2} \\
			&= L_1 \|x-y\|.
		\end{alignat*}
		\textit{Case 3:} $x_1 \leq \ell_1 \leq y_1$. We can find a point $z \in \left[0,\ell_1 + \frac{L_2}{L_1} \ell_2 \right] \times [ 0,1]^{N-1}$, such that $z_1 = \ell_1$ and $\|x-y\| = \|x-z\| + \|z-y\|$. Now the statement follows by applying Case 1 to $x$ and $z$, and Case 2 to $z$ and $y$.
	\end{proof}

	\begin{lemma}\label{L:mapPhi} There is a map $\varphi : \left[ 0,2 \sqrt{r_1+r_2} c + 1 \right] \times [0,1]^{n-1} \to \mkw$, such that $\operatorname{im}(\varphi) = \overline{g\left(\G_{\gamma}(0,1) \right)}$, and, for all $x,y\in \left[ 0,2 \sqrt{r_1+r_2} c + 1 \right] \times [0,1]^{n-1}$, we have
		$$
		\|\varphi (x) - \varphi(y)\| \leq  \exp \left( \frac{\sqrt{r}}{2c} \right) c^{-1} \|x-y\|.
		$$
	\end{lemma}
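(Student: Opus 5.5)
Decompose $\G_\gamma(0,1)$ into the layers $\G_\gamma(w^{k+1},w^k)$ for $k\geq 0$, with $w=\exp\left(-\frac{1}{c\|\nu\|}\right)$. Each layer is a translate of the first one, $\G_\gamma(w,1)$, by $-k\frac{1}{c\|\nu\|}\,\nu$ in the $\lsp$-coordinate, since $\mu\mapsto \mu - k/(c\|\nu\|)$ shifts $\exp(\mu)\in(w,1]$ to $(w^{k+1},w^k]$. The first layer is a product of the cuboid $\E(w,1)\subseteq \lsp$, which has sidelengths at most $c^{-1}$, with a cube of sidelength $c^{-1}$ in $\T$. So its closure is the image of an affine map $\psi_0:[0,1]^n\to \lsp\times\T$ with Lipschitz constant at most $c^{-1}$. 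We orient $\psi_0$ so that the first coordinate runs along the $\nu$-direction: $\psi_0(0,\cdot)$ lies on the face $\mu=0$ and $\psi_0(1,\cdot)$ on the face $\mu=-1/(c\|\nu\|)$. Setting $\psi_k=\tau_{-k\nu/(c\|\nu\|)}\circ\psi_0$, we get matching faces: $\psi_k(1,x')=\psi_{k+1}(0,x')$.

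Next compose with $g$. By Lemma~\ref{L:TransDia}, in the form $g(x+y,\theta)=\Psi_{g(y,0)}\,g(x,\theta)$, we have $\varphi_k:=g\circ\psi_k=\Psi_{\beta}^{k}\circ g\circ\psi_0$, where $\beta=g(-\nu/(c\|\nu\|),0)$. Every coordinate of $\beta$ has absolute value $\exp\left(-\frac{1}{c\|\nu\|(r_1+r_2)n_i}\right)\leq w^{1/n}<1$, so $\Psi_\beta$ is a contraction with factor $q:=\max_i\exp\left(-\frac{1}{n_i c\|\nu\|(r_1+r_2)}\right)=\exp\left(-\frac{1}{2c\sqrt{r_1+r_2}}\right)$ in the worst (complex) case. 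Lemma~\ref{L:gLip} applies because $\psi_0$ maps into the convex closure of $\G(0,1)$, on which the bound still holds by continuity. It gives that $\varphi_0$ is Lipschitz with constant $L_0=\exp\left(\frac{\sqrt r}{2c}\right)c^{-1}$, and hence $\varphi_k$ has constant $L_k=q^kL_0$. The faces still match: $\varphi_k(1,\cdot)=\varphi_{k+1}(0,\cdot)$.

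Now glue iteratively with Lemma~\ref{L:fuseLip}, always keeping the constant $L_0$. The $k$-th piece gets stretched to length $L_k/L_0=q^k$ in the first coordinate. The total length is therefore $\sum_{k\ge0}q^k=\frac{1}{1-q}$. Using $1-e^{-a}\geq a/(1+a)$ with $a=\frac{1}{2c\sqrt{r_1+r_2}}$ gives $\frac1{1-q}\leq 1+\frac1a=2\sqrt{r_1+r_2}\,c+1$. If the sum is smaller than this, we extend the domain by a constant piece at the end.

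The main obstacle is the infinite gluing and the closure. The finite gluings give maps $\tilde\varphi_K$ that agree with each other on growing initial segments and are all $L_0$-Lipschitz. We define $\varphi$ on $[0,\frac{1}{1-q})\times[0,1]^{n-1}$ as their common extension. Since the pieces shrink geometrically towards $0\in\mkw$, $\varphi$ is uniformly Cauchy as $x_1\to\frac1{1-q}$, so it extends continuously with value $0$ there, and the Lipschitz bound persists. We then continue it constantly up to $2\sqrt{r_1+r_2}c+1$. Finally, the image equals $\bigcup_k\overline{g(\G_\gamma(w^{k+1},w^k))}\cup\{0\}$, which is $\overline{g(\G_\gamma(0,1))}$. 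Here we use that $g$ is a homeomorphism onto $\dmk$ and that accumulation points outside $\dmk$ only occur at $0$, because the $\lsp$-component $h$ is bounded and $\mu\to-\infty$.
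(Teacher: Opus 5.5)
Your proposal is correct and follows the paper's proof: the layers $\G_\gamma(w^{k+1},w^k)$ as translates of $\G_\gamma(w,1)$, $\varphi_k=\Psi^k\circ\varphi_0$ with contraction factor $\exp\bigl(-\frac{1}{2c\sqrt{r_1+r_2}}\bigr)$, iterative gluing via Lemma~\ref{L:fuseLip}, and the geometric-series bound. You are in fact more careful than the paper about applying Lemma~\ref{L:gLip} on the closure, the limit at $x_1=\frac{1}{1-q}$, and the constant extension to the full interval. One harmless slip: your claim that each coordinate of $\beta$ is at most $w^{1/n}$ fails for mixed signature ($r_1,r_2>0$), but you only use $q<1$ together with your explicit value of $q$, and that value is correct.
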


	\begin{proof}
		 Let $b_1,\dots,b_{r_2}$ be the canonical basis of $\T$ (that is $b_1=(1,0,\dots,0)\in \T$, etc.). We define the map
		\begin{alignat*}{2}
			\psi_0 : [0,1]^n &\to \lsp \times \T \\
			(x_1,\dots, x_n) &\mapsto  \sum_{i=1}^{r}\frac{x_i - \frac{1}{2}}{\delta_i} e_i  -  \frac{x_{r_1+r_2}}{c\|\nu\|} \nu + \sum_{j=r_1+r_2+1}^{n} \frac{x_{j} + \gamma_j}{c} b_j. 
		\end{alignat*}
		Observe that the image of $\psi_0$ is exactly $\overline{\G_{\gamma}(w,1)}$. 
        
        Since $e_1,\dots,e_r,v,b_{r_1+r_2+1},\dots,b_{n}$ are pairwise orthogonal in $\lsp\times\T$, we have $\| \psi_0 (x) - \psi_0(y) \| \leq c^{-1} \|x-y\|$ for all $x,y\in [0,1]^n$.
		
		Using Lemma \ref{L:gLip}, we get that that the map $\varphi_0 := g\circ \psi_0$ satisfies $$\|\varphi_0(x)- \varphi_0(y)\| \leq \exp \left( \frac{\sqrt{r}}{2c} \right) c^{-1} \| x-y \|.$$

		Next we observe that we can extend our definition of the translation maps $\tau_y:\lsp \to \lsp$ with $y\in\lsp$ from Lemma \ref{L:TransDia} to $\tau_{(y,\theta)}:\lsp\times\T \to \lsp \times \T$ with $(y,\theta)\in \lsp \times \T$, and that this gives rise to a commutative diagram
		\[
		\xymatrix{\lsp \times \T \ar[d]_-{g} \ar[r]^-{\tau_{(y,\theta)}} & \lsp \times \T \ar[d]^-{g}     
			 \\
			\mkw \ar[r]_-{\Psi_{g(y,\theta)}} & \mkw}
		\]
        (compare with Lemma \ref{L:TransDia}).

        We have 
        $$\G_{\gamma}(w^{k+1},w^k)=\tau_{(\log(w^k)\nu,0)}\G_{\gamma}(w,1)=\tau_{(\log(w)\nu,0)}^k\G_{\gamma}(w,1).$$
        
		Thus, by letting $\psi_{k} = \tau_{(\log(w)\nu,0)}^{k} \circ \psi_1$, for $k=1,2,\dots$, we get that $$\overline{\G_{\gamma}(0,1)} = \bigcup_{k=0}^\infty \operatorname{im}(\psi_{k}).$$
		Now we let $\varphi_k=g\circ \psi_k = \Psi_{g(\log(w)\nu,0)}^{k} \circ \varphi_0$.
        Since $g(\log(w)\nu,0)=\left(w^{\frac{1}{n_i(r_1+r_2)}}  \right)_{i=1}^{r_1+r_2}$, the linear map $\Psi_{g(\log(w)\nu,0)}$ is given by
        $$
        (x_1,\dots,x_{r_1+r_2})\mapsto \left( w^{\frac{1}{r_1+r_2}} x_1, \dots , w^{\frac{1}{r_1+r_2}} x_{r_1}, w^{\frac{1}{2(r_1+r_2)}} x_{r_1+1} ,\dots , w^{\frac{1}{2(r_1+r_2)}} x_{r_1+r_2}  \right).
        $$
        Thus,
		$$\|\varphi_k(x)- \varphi_k(y)\| \leq w^{\frac{k}{2(r_1+r_2)}} \exp \left( \frac{\sqrt{r}}{2c} \right) c^{-1} \| x-y \|, \quad \forall k\geq 0.$$
		We can now apply Lemma \ref{L:fuseLip} to iteratively glue the maps $\varphi_0,\varphi_1,\varphi_2,\dots$ together to a single map $\tilde{\varphi}$. This map will be defined on
		$$
		\left[0, \sum_{k=0}^\infty w^{\frac{k}{2(r_1+r_2)}} \right) \times [0,1]^{n-1}
		$$
		and satisfy the same Lipschitz-condition as $\varphi_0$. We calculate
		$$
			\sum_{k=0}^\infty w^{\frac{i}{2(r_1+r_2)}} = \frac{1}{1- w^{\frac{1}{2(r_1+r_2)}}}
			= \frac{1}{1- \exp \left(\frac{-1}{2 \sqrt{r_1+r_2} c} \right) }
			 \leq 2 \sqrt{r_1+r_2} c + 1,
		$$
        where, in the last step, we use that $\frac{1}{1-\exp(-1/x)}\leq x+1$.
        Hence, we obtain $\varphi :\left[ 0,2 \sqrt{r_1+r_2} c + 1 \right) \times [0,1]^{n-1} \to \mkw$ covering all of $g\left(\G_{\gamma}(0,1) \right)$.
        We then extend this map continuously to $\left[ 0,2 \sqrt{r_1+r_2} c + 1 \right] \times [0,1]^{n-1}$, by letting $\varphi ( 2 \sqrt{r_1+r_2} c + 1 , x_2 , \dots , x_n) = 0$.
	\end{proof}
	
	\begin{corollary}\label{C:LipConstF}
		The set
		$\partial  \F( D_{\delta},\left(0,1\right],\nu)$ is of Lipschitz class $\operatorname{Lip}\left(n-1,M,L \right)$
		 with
		$$M\leq 6n \sqrt{r_1+r_2} c  \left(2\pi c\right)^{r_2} 2^{r_1}\quad \text{and} \quad L=\exp \left( \frac{\sqrt{r}}{2c} \right) c^{-1}.$$
	\end{corollary}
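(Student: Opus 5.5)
By Remark \ref{R:+}, it suffices to show that $\partial \F(0,1)$ is of Lipschitz class $\operatorname{Lip}(n-1,M',L)$ with $M'\leq 6n\sqrt{r_1+r_2}\,c\,(2\pi c)^{r_2}$ and the stated $L$. Multiplying by $2^{r_1}$ then gives the bound for $\partial\F(D_\delta,(0,1],\nu)$.

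Since $\G(0,1)=\bigsqcup_{\gamma\in\mathcal{J}}\G_\gamma(0,1)$ and $g$ is a bijection onto $\F(0,1)$, we have $\F(0,1)=\bigcup_\gamma g(\G_\gamma(0,1))$. Hence
$$\partial\F(0,1)\subseteq\bigcup_{\gamma\in\mathcal{J}}\partial\, g(\G_\gamma(0,1)),$$
and $|\mathcal{J}|=(2\pi c)^{r_2}$. It therefore suffices, for each fixed $\gamma$, to cover $\partial g(\G_\gamma(0,1))\subseteq \overline{g(\G_\gamma(0,1))}$ by at most $6n\sqrt{r_1+r_2}\,c$ maps from $[0,1]^{n-1}$ with Lipschitz constant $L$.

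To do this I would rerun the construction of Lemma \ref{L:mapPhi} one face at a time. The set $\overline{\G_\gamma(w,1)}$ is a box, and its boundary consists of $2n$ faces, each the image of $[0,1]^{n-1}$ under $\psi_0$ with one coordinate frozen at $0$ or $1$.

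The faces split into two kinds:
\begin{itemize}
\item The $2(n-1)$ faces transverse to the $\nu$-direction ($e_i$-faces and $\T$-faces) are swept along the $\nu$-direction by the translates $\tau_{(\log(w)\nu,0)}^k$. Gluing their images under $g$ with Lemma \ref{L:fuseLip}, exactly as in Lemma \ref{L:mapPhi}, gives for each such face a single map on $[0,2\sqrt{r_1+r_2}\,c+1]\times[0,1]^{n-2}$ with constant $L$.
\item The two $\nu$-faces need no gluing. Only the top face at $\mu=0$ lies on the boundary; the internal faces between consecutive layers $k$ cancel, and the limit face at $\mu\to-\infty$ collapses to $0$. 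The top face is covered by one map $g\circ\psi_0$ restricted to a face, with constant $L$ by Lemma \ref{L:gLip}.
\end{itemize}

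Next I would chop the long parameter interval $[0,2\sqrt{r_1+r_2}\,c+1]$ into $\lceil 2\sqrt{r_1+r_2}\,c+1\rceil\leq 3\sqrt{r_1+r_2}\,c$ unit pieces, using $c\geq1$. Each piece yields a map $[0,1]^{n-1}\to\mkw$ with the same constant $L$. Counting, for each $\gamma$ there are at most $2(n-1)\cdot3\sqrt{r_1+r_2}\,c+2\leq 6n\sqrt{r_1+r_2}\,c$ maps. Multiplying by $(2\pi c)^{r_2}$ and by $2^{r_1}$ gives the claimed $M$.

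The main obstacle is the bookkeeping that ensures the covered set really contains the boundary. Two points need care. First, the boundary of a union is contained in the union of the closures of the pieces' boundaries, and $g$ maps the box boundaries in $\lsp\times\T$ onto the boundaries in $\mkw$ because $g$ is a homeomorphism onto $\dmk$. Second, the point $0$ and the coordinate hyperplanes coming from the closure must also be handled; these are covered by the continuous extension to $0$ in Lemma \ref{L:mapPhi}. I would also check that the constant $2$ in the face count absorbs the extra top-face map, which uses $2(n-1)\cdot3+2\leq6n$.
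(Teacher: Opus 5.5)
Your proposal is correct and is essentially the paper's argument. The paper glues the full $n$-dimensional maps as in Lemma~\ref{L:mapPhi}, cuts the long interval into $m$ pieces and takes all $2n$ faces of every piece ($2nm$ maps per $\gamma$). You instead pass to faces before gluing, which discards the redundant cutting faces (leaving $2(n-1)m$ maps plus the top face) and makes explicit, via the homeomorphism $g$ onto the open set $\dmk$ and the collapse to $0$, why these images cover the boundary.

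One small imprecision you share with the paper is the claim $\lceil 2\sqrt{r_1+r_2}\,c+1\rceil\leq 3\sqrt{r_1+r_2}\,c$ for all $c\geq 1$, which fails, e.g., for $r_1+r_2=1$ and $c=7/(2\pi)$. Your sharper count only needs $m\leq 2\sqrt{r_1+r_2}\,c+2$, and that already gives the stated bound on $M$; in any case $c\geq 28$ for the choice made in Lemma~\ref{L:ugly}.
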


	\begin{proof}
		We continue working with the map $\varphi$ from Lemma \ref{L:mapPhi}. By partitioning the interval $[0, 2 \sqrt{r_1+r_2} c + 1]$ into $m=\lceil 2 \sqrt{r_1+r_2} c + 1 \rceil \leq 3 \sqrt{r_1+r_2} c $ parts, we obtain maps
		$xi_1,\dots,xi_m : [0,1]^{n} \to \mkw$ covering exactly $\overline{g\left(\G_{\gamma}(0,1) \right)}$ and satisfying $\|xi_i(x)-xi_i(y)\| \leq L\|x-y\|$. 
		
		Since $$\F(0,1) = g(\G(0,1))= \bigsqcup_{\gamma \in \mathcal{J}} g\left(\G_{\gamma}(0,1) \right),$$
		
		we can cover $\overline{\F(0,1)}$ with $m \left(2\pi c\right)^{r_2}$ such maps --- all with Lipschitz-constant $L=\exp \left( \frac{\sqrt{r}}{2c} \right) c^{-1}$.
		
		Thus, $\partial \F(0,1)$ can be covered by $2n m \left(2\pi c\right)^{r_2}$ maps $[0,1]^{n-1} \to \mkw$ with the same Lipschitz-constant, which gives the desired result, by Remark \ref{R:+}.
	\end{proof}
	
	\section{Minima}\label{S:Minima}

    The last ingredient, we need to apply Proposition \ref{P:LatLipCounting} and obtain our main result, is an upper bound for the covering radius $r(\Psi_{\beta_s} A_t)$. By Lemma \ref{L:RadMin}, we can do this by giving a lower bound for $\lambda_1(\Psi_{\beta_s} A_t)^{1-n}$.

    Recall that $A_t = \frac{1}{t N(\mathfrak{a})^{1/n}} \phi(\mathfrak{a})$, where $\mathfrak{a}\subseteq\mathcal{O}_K$ is an integral ideal in the class $\mathcal{C}^{-1}$. Moreover, the elements $\beta_s \in \mkw$ were chosen to satisfy $\ell(\beta_s)= - y_s =  - \sum_{i=1}^{r} \frac{s_i}{\delta_i} e_i$.
    The vectors $e_i\in H$, the scalars $\delta_i$, and the set $\mathcal{S}\subseteq\Z^r$, from which we take our elements $s$, were defined in Subsection \ref{Ss:SpecChoice}.

    We present two different bounds for our minima. The first will only look at one lattice $\Psi_{\beta_s} A_t$ at a time, while the second approach bounds the sum
    $\sum_{s\in \mathcal{S}} \lambda_1(\Psi_{\beta_s} A_t)^{1-n}$.
    
	\begin{lemma}\label{L:MinBound}
		There is an integral ideal $\mathfrak{b} \in \mathcal{C}$, such that
		$$ 
		\lambda_1(\Psi_{\beta_s} A_t) \geq \frac{\sqrt{n}}{\sqrt{2}}  N(\mathfrak{b})^{1/n}  t^{-1}.
		$$
	\end{lemma}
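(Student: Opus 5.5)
The natural approach is AM--GM applied to a shortest vector, combined with the fact that the norm of a nonzero ideal element is bounded below.

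Let $x\in\Psi_{\beta_s}A_t$ be a nonzero vector with $\|x\|=\lambda_1(\Psi_{\beta_s}A_t)$. By the definition of $A_t$, we can write $x=\frac{1}{tN(\mathfrak{a})^{1/n}}\Psi_{\beta_s}\phi(\alpha)$ for some nonzero $\alpha\in\mathfrak{a}$.

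\textbf{Step 1: the norm of $x$.} Since $\ell(\beta_s)=-y_s\in H$, we have $\NN(\beta_s)=\exp(\Nl(-y_s))=1$. Multiplicativity of $\NN$ then gives
$$
\NN(x)=t^{-n}N(\mathfrak{a})^{-1}\,\vert N_{K/\Q}(\alpha)\vert .
$$
Now $\alpha\OO_K=\mathfrak{a}\mathfrak{b}$ for an integral ideal $\mathfrak{b}$. Since $\mathfrak{a}\in\mathcal{C}^{-1}$ and $\alpha\OO_K$ is principal, $\mathfrak{b}$ lies in $\mathcal{C}$. Hence $\vert N_{K/\Q}(\alpha)\vert=N(\mathfrak{a})N(\mathfrak{b})$, and so $\NN(x)=t^{-n}N(\mathfrak{b})$.

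One subtlety is that $\mathfrak{b}$ depends on the shortest vector, and hence on $s$ and $t$. The lemma only asserts existence, so this is harmless. If a bound uniform in $s$ is wanted later, one would instead take $\mathfrak{b}$ of minimal norm in $\mathcal{C}$, since $N(\mathfrak{b})\geq\min_{\mathfrak{b}'\in\mathcal{C}}N(\mathfrak{b}')$.

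\textbf{Step 2: AM--GM.} Write $\NN(x)=\prod_{i}\vert x_i\vert^{n_i}$. This is a product of $n$ factors: $\vert x_i\vert^2$ once for each real place, and $\vert x_i\vert^2$ twice (as $\vert x_i\vert^4$) for each complex place, after squaring. Then
$$
\NN(x)^{2/n}=\Bigl(\prod_{i\le r_1}\vert x_i\vert^2\prod_{i>r_1}(\vert x_i\vert^2)^2\Bigr)^{1/n}\le\frac1n\Bigl(\sum_{i\le r_1}\vert x_i\vert^2+2\sum_{i>r_1}\vert x_i\vert^2\Bigr)\le\frac{2}{n}\|x\|^2 .
$$
The factor $2$ is where the $\sqrt{2}$ in the statement comes from. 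It accounts for the norm on $\mkw$ counting each complex place once, while $\NN$ counts it twice.

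\textbf{Step 3: conclusion.} Combining the two steps gives
$$
\|x\|\ge\sqrt{n/2}\,\NN(x)^{1/n}=\frac{\sqrt n}{\sqrt2}N(\mathfrak{b})^{1/n}t^{-1}.
$$

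There is no real obstacle here. The only points needing care are the weighting of complex coordinates in the AM--GM step and the observation that $\NN(\beta_s)=1$, which holds because $y_s\in H$.
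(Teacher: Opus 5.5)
Your proposal is correct and follows essentially the same route as the paper: AM--GM on the coordinates of a shortest vector, with complex places weighted by $n_j$ (which costs the factor $2$), together with $\NN(\beta_s)=1$ and the choice $\mathfrak{b}=\alpha\OO_K\mathfrak{a}^{-1}\in\mathcal{C}$. Your remark that $\mathfrak{b}$ depends on $s$, and that a bound uniform in $s$ follows by passing to an ideal of minimal norm in $\mathcal{C}$, usefully makes explicit how the paper later uses a single $\mathfrak{b}_\mathcal{C}$ in the proof of the main theorem.
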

\begin{remark}
	The proof of this Lemma is based on a similar calculation from \cite{Preda}. Here Mih\u{a}ilescu shows the upper bound
	$$
	d< \frac{n^{\frac{3n+1}{2}}  \vol(A_1)}{N(\mathfrak{b})^{\frac{n-1}{n}}  t},
	$$
	where $d$ is the length of the diagonal of the fundamental parallelepiped of $A_t$. We have adapted the proof to work for $\lambda_1(\Psi_{\beta_s} A_t)$.
\end{remark}
    
	\begin{proof}[Proof of Lemma \ref{L:MinBound}]
		There exist $\alpha \in \mathfrak{a}$, such that $N(\mathfrak{a})^{-1/n} t^{-1} \Psi_{\beta_s}\phi(\alpha)$ has norm $\lambda_1(\Psi_{\beta_s} A_t)$. We now use the AM--GM inequality and the fact that $\beta_s$ has norm one to get
		\begin{alignat*}{2}
			\left(N(\mathfrak{a})^{1/n} t \lambda_1(\Psi_{\beta_s} A_t) \right)^2 &= \| \Psi_{\beta_s} \phi(\alpha) \|^2 \\
			&= \sum_{j=1}^{r_1+r_2} \vert \beta_ {s,j} \sigma_j(\alpha)  \vert^2 \\
			&\geq \frac{1}{2} \sum_{j=1}^{r_1+r_2} n_i \vert \beta_ {s,j} \sigma_j(\alpha)  \vert^2 \\
			& \geq \frac{n}{2}  \left( \prod \vert \beta_ {s,j} \sigma_j(\alpha) \vert^{2n_i} \right)^{1/n} \\
			&= \frac{n}{2}  \vert N_{K/\Q}(\alpha)\vert^{2/n}.
		\end{alignat*}
		Since $\alpha \in \mathfrak{a}$, we have $\vert N_{K/\Q}(\alpha)\vert / N(\mathfrak{a}) = N(\alpha \OO_K  \mathfrak{a}^{-1})$, and $\mathfrak{b}=\alpha \OO_K \mathfrak{a}^{-1}$ is an integral ideal in $\mathcal{O}_K$. Since $\mathfrak{a} \in \mathcal{C}^{-1}$, we have $\mathfrak{b}\in \mathcal{C}$. The inequality above then boils down to
		$\lambda_1(\Psi_{\beta_s} A_t) \geq \frac{\sqrt{n}}{\sqrt{2}}  N(\mathfrak{b})^{1/n}  t^{-1}$.
	\end{proof}

	\begin{lemma} \label{L:MinSumBound}
		There exist distinct integral ideals $\mathfrak{b}_1, \dots, \mathfrak{b}_k \in \mathcal{C}$  with $k\leq \vert S \vert$, such that 
		$$
		\sum_{s\in \mathcal{S}}\lambda_1(\Psi_{\beta_s} A_t)^{1-n}\leq 
		 \exp \left( \frac{(n-1) \sqrt{r}}{2c}\right) \frac{\vert S \vert}{R_K} \left(\frac{n}{n-1}\right)^r
		 \sum_{j=1}^k N(\mathfrak{b}_j)^{\frac{1-n}{n}}  t^{n-1}.
		$$
	\end{lemma}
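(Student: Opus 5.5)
The plan is to make the per-$s$ argument of Lemma \ref{L:MinBound} more precise and then sum over $s\in\mathcal{S}$ by comparing the sum with an integral over $H$. For each $s\in\mathcal{S}$ I pick $\alpha_s\in\mathfrak{a}$ with $\lambda_1(\Psi_{\beta_s}A_t)=t^{-1}N(\mathfrak{a})^{-1/n}\|\Psi_{\beta_s}\phi(\alpha_s)\|$, and put $\mathfrak{b}_s=\alpha_s\OO_K\mathfrak{a}^{-1}\in\mathcal{C}$. The distinct ideals among the $\mathfrak{b}_s$ are $\mathfrak{b}_1,\dots,\mathfrak{b}_k$, so $k\le|\mathcal{S}|$ is automatic. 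The lemma then reduces to showing, for each fixed $j$,
$$
\sum_{s:\ \mathfrak{b}_s=\mathfrak{b}_j}\lambda_1(\Psi_{\beta_s}A_t)^{1-n}\le \exp\left(\frac{(n-1)\sqrt r}{2c}\right)\frac{|\mathcal{S}|}{R_K}\left(\frac{n}{n-1}\right)^r N(\mathfrak{b}_j)^{\frac{1-n}{n}}t^{n-1}.
$$

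\emph{Pointwise bound in logarithmic coordinates.} Write $\ell(\Psi_{\beta_s}\phi(\alpha_s))=\ell(\phi(\alpha_s))-y_s=h_s+\mu\nu$ with $h_s\in H$ and $\mu=\log(N(\mathfrak{a})N(\mathfrak{b}_j))$. Using $\|x\|\ge\max_i|x_i|$ gives
$$
\|\Psi_{\beta_s}\phi(\alpha_s)\|\ge (N(\mathfrak{a})N(\mathfrak{b}_j))^{1/n}\, G(h_s),\qquad G(h):=\exp\Big(\max_i \tfrac{h_i}{n_i}\Big),
$$
where the factor $(N(\mathfrak{a})N(\mathfrak{b}_j))^{1/n}$ uses $\mu\nu_i/n_i\ge \mu/n$, which holds if one checks it with a little care. (Alternatively, one can split $\mu$ off exactly and bound the remaining $H$-dependence.) Hence $\lambda_1(\Psi_{\beta_s}A_t)^{1-n}\le N(\mathfrak{b}_j)^{\frac{1-n}{n}}t^{n-1}G(h_s)^{1-n}$.

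\emph{Distinct points of a grid.} If $\mathfrak{b}_s=\mathfrak{b}_{s'}$, then $\alpha_s,\alpha_{s'}$ differ by a unit, so $h_s\in h_0+\Gamma-y_s$ for a fixed $h_0$. Since the $y_s$ are distinct representatives in a fundamental domain of $\Gamma$, the points $h_0-h_s$ are pairwise distinct points of the refined grid $h_0+\Gamma+\{y_s\}$. This grid is a translate of the lattice $\bigoplus_i\Z\frac{e_i}{\delta_i}$, whose cells are translates of $D_\delta$ and have volume $\vol(\Gamma)/|\mathcal{S}|$. The function $\log G$ is $1$-Lipschitz on $H$, and every point of a cell $h+D_\delta$ is within $\frac{\sqrt r}{2c}$ of its centre $h$. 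Therefore
$$
G(h)^{1-n}\le \exp\left(\frac{(n-1)\sqrt r}{2c}\right)\frac{|\mathcal{S}|}{\vol(\Gamma)}\int_{h+D_\delta}G^{1-n},
$$
and summing over the distinct cells bounds our sum by $\exp\left(\frac{(n-1)\sqrt r}{2c}\right)\frac{|\mathcal{S}|}{\sqrt{r_1+r_2}R_K}\int_H G(h)^{1-n}\,\mathrm{d}h$.

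\emph{Main obstacle.} The main obstacle is evaluating, or bounding, $\int_H\exp(-(n-1)\max_i h_i/n_i)\,\mathrm{d}h$ so that it gives $\sqrt{r_1+r_2}\,(n/(n-1))^r$. I would substitute $u_i=h_i/n_i$, so the integral becomes an integral of $\exp(-(n-1)\max_i u_i)$ over the hyperplane $\sum_i n_iu_i=0$. I would then split $H$ into the regions where a given coordinate attains the maximum and integrate out the remaining $r$ coordinates using layer-cake in $m=\max_i u_i$. The volume of $\{\max_i u_i\le m\}\cap H$ grows like a simplex, proportional to $m^r$, so the integral is of the form $\text{const}\cdot\int_0^\infty m^{r-1}e^{-(n-1)m}\,\mathrm{d}m$. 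The constant comes from the simplex volume and the factor $\sqrt{r_1+r_2}$ from the normal of $H$. I expect the combinatorial constants (a simplex volume of order $n^r/r!$ against the factor $r!/(n-1)^r$ from the Gamma integral) to combine into $\sqrt{r_1+r_2}(n/(n-1))^r$. If the constant does not come out exactly, I would revisit the pointwise bound and replace $\max_i$ by a weighted AM--GM estimate with a tunable exponent.
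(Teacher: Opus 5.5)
Your route is the paper's. You take a minimal vector for each $s$, group the $s$ by associate class (equivalently by $\mathfrak{b}_j$), and use $\|x\|\ge\max_i|x_i|$. You then replace each term by an average over a translate of $D_\delta$ at the cost of $\exp\bigl(\frac{(n-1)\sqrt r}{2c}\bigr)$ and sum into one integral over $H$. The paper works in the coordinates $\y:\R^r\to H$, whose Jacobian $\vol(\Gamma)/\vert\mathcal S\vert$ produces your prefactor, and it imports the integral from \cite[Lemma~10]{Korneel}.

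Two steps are wrong as written, though both can be repaired.

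\emph{The pointwise bound.} The inequality $\mu\nu_i/n_i\ge\mu/n$ fails in mixed signature. At a complex place $\nu_i/n_i=\frac{1}{2(r_1+r_2)}<\frac1n$ whenever $r_1>0$, so for $\mu>0$ your displayed lower bound is false in general, for example when one complex coordinate of $h_s$ dominates. Decompose instead along $\nu':=(n_i/n)_i$, writing $\ell(x)=h_s'+\mu\nu'$. Then $\max_i|x_i|=e^{\mu/n}G(h_s')$ holds exactly. Since $h_s'=h_s+\mu(\nu-\nu')$ with $\nu-\nu'\in H$, and $\mu$ depends only on $j$, this is a common translation within each class, so the rest of your argument is unchanged.

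\emph{The grid.} $\Gamma+\{y_s\}$ is generally not a translate of $\bigoplus_i\Z\frac{e_i}{\delta_i}$. The LLL vectors $l_i=e_i+\sum_{j<i}\mu_{ij}e_j$ have real Gram--Schmidt coefficients, so in general $\Gamma\not\subseteq\bigoplus_i\Z\frac{e_i}{\delta_i}$. What you actually need is only that the cells $h_0'+\gamma-y_s+D_\delta$ ($\gamma\in\Gamma$, $s\in\mathcal S$) are pairwise disjoint. This holds because $\bigsqcup_s(-y_s+D_\delta)=\{\sum_i\lambda_ie_i:\lambda_i\in[-1+\frac1{2\delta_i},\frac1{2\delta_i})\}$ is a fundamental domain of $\Gamma$, which is the same fact the paper uses.

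The integral you flag as the main obstacle comes out exactly, and no splitting into regions is needed. Put $F(h)=\max_ih_i/n_i$. On $H$ some $h_i\ge 0$, so $F\ge 0$. For $m\ge0$, the affine isometry $h\mapsto(n_im-h_i)_i$ maps $\{h\in H:F(h)\le m\}$ onto the simplex $\{g\in\R_{\ge0}^{r_1+r_2}:\sum_ig_i=nm\}$, whose $r$-dimensional volume is $\sqrt{r_1+r_2}\,(nm)^r/r!$. By the layer-cake formula,
\[
\int_He^{-(n-1)F(h)}\,\mathrm{d}h=\int_0^\infty(n-1)e^{-(n-1)m}\sqrt{r_1+r_2}\,\frac{(nm)^r}{r!}\,\mathrm{d}m=\sqrt{r_1+r_2}\left(\frac{n}{n-1}\right)^r .
\]
Combined with your prefactor $\frac{\vert\mathcal S\vert}{\sqrt{r_1+r_2}\,R_K}$, this gives precisely the stated bound. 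With these repairs your argument is a complete proof, and it even proves directly the integral identity that the paper cites from Debaene.
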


    \begin{remark}
        Here we essentially adapt an argument that can be found in \cite[p.~13]{Korneel}. We show that even with our choice of fundamental domain of $\Gamma$ and our choice of Schmidt partition, we can use similar arguments to bound the sum $\sum_{s\in \mathcal{S}}\lambda_1(\Psi_{\beta_s} A_t)^{1-n}$ using an integral computed in \cite[Lemma~10]{Korneel}.
    \end{remark}
    
    \begin{proof}[Proof of Lemma \ref{L:MinSumBound}]
        The coordinates of a lattice point in $\Psi_{\beta_s} A_t$ have the form $\beta_{s,i} (N(\mathfrak{a})^{1/n}t)^{-1}\sigma_i(\alpha)$ for some element $\alpha\in \mathfrak{a}$. So we can bound
        $$
        \lambda_1(\Psi_{\beta_s} A_t)  (N(\mathfrak{a})^{1/n}t) \geq \min_{\alpha\in \mathfrak{a}\setminus\{0\}} \max_i (\vert \beta_{s,i} \vert  \vert \sigma_i(\alpha) \vert)=:\mu(\mathfrak{a},s).
        $$
        Hence, instead of bounding $\sum_{s\in \mathcal{S}} \lambda_1(\Psi_{\beta_s} A_t)^{1-n}$, we can now bound $\sum_{s\in \mathcal{S}} \mu(\mathfrak{a},s)^{1-n}$.

        For $s\in \mathcal{S}$, we can find $\alpha_s \in \mathfrak{a}$, such that $\mu(\mathfrak{a},s) = \max_i (\vert \beta_{s,i} \vert  \vert \sigma_i(\alpha_s) \vert)$. We fix a decomposition  $\mathcal{O}_K^\times = W \oplus U$, where $W$ is the subgroup of roots of unity and $U\cong \mathbb{Z}^r$. By possibly replacing some of the $\alpha_i$ by $\alpha_i\zeta$ with $\zeta\in W$, we can find $\alpha_1^\prime, \dots, \alpha_k^\prime \in \mathfrak{a}$ non-associated elements with $k\leq \vert S \vert$ together with $u_s \in U$ for $s\in S$, such that $\alpha_s= u_s  \alpha_{i_s}^\prime $. We now get
	\begin{alignat*}{2}
        \sum_{s\in \mathcal{S}} \mu(\mathfrak{a},s)^{1-n}
		& = \sum_{s\in \mathcal{S}} \max_i \left(\vert \beta_{s,i} \vert  \vert \sigma_i(\alpha_s) \vert \right)^{1-n} \\
		& \leq \sum_{j=1}^k \sum_{u\in U} \sum_{s\in \mathcal{S}} \max_i \left(\vert \beta_{s,i} \vert  \vert \sigma_i(u  \alpha_j^\prime) \vert \right)^{1-n} \\
		& = \sum_{j=1}^k \sum_{u\in U} \sum_{s\in \mathcal{S}} \max_i \left(\vert \beta_{s,i} \vert  \vert \sigma_i(u) \vert  \vert \sigma_i(\alpha_j^\prime) \vert \right)^{1-n}.
	\end{alignat*}
    Consider the group homomorphism $g_0=g(\cdot,0):\lsp \to\mkw^+$, which satisfies $\ell\circ g_0=\operatorname{id}_\lsp$, and $g_0\circ\ell$ is the same as applying $\vert\cdot\vert$ in every coordinate. Since $\ell\circ\phi$ is a bijection between $U$ and $\Gamma$, we can rewrite the sums over $U$ and $\mathcal{S}$ above as
    $$
    \sum_{u\in U} \sum_{s\in \mathcal{S}} \max_i \left(\vert \beta_{s,i} \vert  \vert \sigma_i(u) \vert  \vert \sigma_i(\alpha_j^\prime) \vert \right)^{1-n} = \sum_{l\in \Gamma} \sum_{s\in \mathcal{S}} \max_i \left(g_0(-y_s)_i g_0(l)_i  \vert \sigma_i(\alpha_j^\prime) \vert \right)^{1-n},
    $$
    where $\ell(\beta_s)= - y_s =  - \sum_{i=1}^{r} \frac{s_i}{\delta_i} e_i$. We want to approximate this sum by an integral. To do this, we extend the collection of vectors $y_s\in \lsp$, indexed by $s\in \mathcal{S}\subseteq\Z^r$, to a linear map
    $$
    \y: \R^r \to \lsp; \; \y(w)= \sum_{i=1}^{r} \frac{w_i}{\delta_i} e_i.
    $$
    Notice that $D_{\delta}=\{\y(w)\mid w\in \left[ -1/2,1/2 \right)^r\}$, so, for $w\in \left[ -1/2,1/2 \right)^r$, we have 
    $$
    g_0(\y(w))_i = \exp\left(\frac{\y(w)_i}{n_i}\right) \geq \exp\left(\y(w)_i\right) \geq \exp\left(- \frac{\sqrt{r}}{2c}\right).
    $$
    Therefore, we can bound
    \begin{alignat*}{2}
    &\max_i \left(g_0(-y_s)_i g_0(l)_i  \vert \sigma_i(\alpha_j^\prime) \vert \right)^{1-n} \\
    \leq &\exp\left( \frac{\sqrt{r}(n-1)}{2c}\right) \int_{w\in \left[ -1/2,1/2 \right)^r} \max_i \left( g_0(\y(w-s))_i g_0(l)_i  \vert \sigma_i(\alpha_j^\prime) \vert \right)^{1-n} \mathrm{d}w.
    \end{alignat*}
    Now, since $\{\y(w-s)\mid w\in \left[ -1/2,1/2 \right)^r,\ s\in \mathcal{S}\}$ is a fundamental domain of $\Gamma$, summing over $\mathcal{S}$ and $\Gamma$ will simply extend the domain of integration to all of $\R^r$, i.e.,
    $$
    \sum_{l\in \Gamma} \sum_{s\in \mathcal{S}}  \int_{w\in \left[ -1/2,1/2 \right)^r} \max_i \left( g_0(\y(w-s))_i g_0(l)_i  \vert \sigma_i(\alpha_j^\prime) \vert \right)^{1-n} \mathrm{d}w =
    \int_{w\in \R^r} \max_i \left( g_0(\y(w))_i  \vert \sigma_i(\alpha_j^\prime) \vert \right)^{1-n} \mathrm{d}w.
    $$
    This integral is essentially the same as \cite[Lemma~10]{Korneel}, i.e.,
    $$
\int_{w\in \R^r} \max_i \left( g_0(\y(w))_i  \vert \sigma_i(\alpha_j^\prime) \vert \right)^{1-n} \mathrm{d}w= \vert N_{K/\Q}(\alpha_j^\prime) \vert^{\frac{1-n}{n}} \frac{\vert S \vert}{R_K} \left(\frac{n}{n-1}\right)^r.
    $$
    Hence, we obtain
    \begin{alignat*}{2}
        \sum_{s\in \mathcal{S}}\lambda_1(\Psi_{\beta_s} A_t)^{1-n}&\leq N(\mathfrak{a})^{\frac{n-1}{n}}t^{n-1}  \sum_{s\in \mathcal{S}} \mu(\mathfrak{a},s)^{1-n}\\
        &\leq  N(\mathfrak{a})^{\frac{n-1}{n}}t^{n-1}\sum_{j=1}^k \sum_{u\in U} \sum_{s\in \mathcal{S}} \max_i \left(\vert \beta_{s,i} \vert  \vert \sigma_i(u) \vert  \vert \sigma_i(\alpha_j^\prime) \vert \right)^{1-n} \\
        &\leq N(\mathfrak{a})^{\frac{n-1}{n}}t^{n-1}\sum_{j=1}^k \exp\left( \frac{\sqrt{r}(n-1)}{2c}\right) \int_{w\in \R^r} \max_i \left( g_0(\y(w))_i  \vert \sigma_i(\alpha_j^\prime) \vert \right)^{1-n} \mathrm{d}w \\
        &\leq N(\mathfrak{a})^{\frac{n-1}{n}}t^{n-1}  \exp\left( \frac{\sqrt{r}(n-1)}{2c}\right) \sum_{j=1}^k \vert N(\alpha_j^\prime) \vert^{\frac{1-n}{n}} \frac{\vert S \vert}{R_K} \left(\frac{n}{n-1}\right)^r.
    \end{alignat*}
    
    Just as in the last lemma, we have $\vert N_{K/\Q}(\alpha_j^\prime)\vert / N(\mathfrak{a}) = N(\mathfrak{b}_j)$, where $\mathfrak{b}_j=\alpha_j^\prime \OO_K \mathfrak{a}^{-1}$ are distinct integral ideals in the class $\mathcal{C}$.
    \end{proof}

	\begin{lemma}[{\cite[Lemma~12]{Korneel}}]\label{L:SumIdeals} Let $\mathfrak{b}_1,\mathfrak{b}_2,\dots,\mathfrak{b}_k$ be distinct integral ideals in $\mathcal{O}_K$. Then
		$$
		\sum_{j=1}^{k} N(\mathfrak{b}_j)^\frac{1-n}{n} \leq 6n k^{1/n} \log^+(k)^{\frac{(n-1)^2}{n}}.
		$$
        Here, $\log^+(x) = \max(1,\log(x))$.
	\end{lemma}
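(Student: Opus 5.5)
The sum is largest when the $N(\mathfrak{b}_j)$ are as small as possible, so I would first reduce to an extremal configuration. Since the summand $x\mapsto x^{\frac{1-n}{n}}$ is decreasing, I will bound $\sum_{j=1}^k N(\mathfrak{b}_j)^{\frac{1-n}{n}}$ by a sum over the $k$ smallest norms that distinct integral ideals can have.

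To make this precise, let $a_m$ denote the number of integral ideals of norm exactly $m$, and set $A(x)=\sum_{m\le x}a_m=\sigma_K(x)$. For distinct ideals, at most $a_m$ of the $\mathfrak{b}_j$ have norm $m$. Hence the sum is at most $\sum_{m\le T}a_m m^{\frac{1-n}{n}}$, where $T$ is the least integer with $A(T)\ge k$.

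Next I need an explicit, field-independent upper bound for $A(x)$. The key input is that $a_m\le \tau_n(m)$, the number of ways to write $m$ as an ordered product of $n$ positive integers. This holds because the Dirichlet series $\zeta_K(s)$ is coefficientwise dominated by $\zeta(s)^n$: each prime above $p$ contributes at most as much as the $n$ factors of $\zeta$ do, using $\sum f_i\le n$. Consequently $A(x)\le D_n(x):=\sum_{m\le x}\tau_n(m)$. I would then bound $D_n(x)\le x\,(\log x+1)^{n-1}/(n-1)!$, or a similar crude estimate, by induction on $n$ with $D_n(x)=\sum_{d\le x}D_{n-1}(x/d)$.

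With this in hand, I would apply partial summation:
\[
\sum_{m\le T}a_m m^{\frac{1-n}{n}} = A(T)T^{\frac{1-n}{n}}+\frac{n-1}{n}\int_1^T A(x)x^{\frac{1-n}{n}-1}\,\mathrm{d}x .
\]
In this formula I replace $A(x)$ by $\min(k,D_n(x))$, which is legitimate because the smallest norms are used. Substituting the bound for $D_n$, the integral behaves like $\int_1^{X} x^{-1+1/n}\log^{n-1}x\,\mathrm{d}x\approx nX^{1/n}\log^{n-1}X$, up to a cutoff $X$ where $D_n(X)\approx k$. At that cutoff $X\approx k/\log^{n-1}k$, so $X^{1/n}\log^{n-1}X\approx k^{1/n}\log(k)^{(n-1)-\frac{n-1}{n}}=k^{1/n}\log(k)^{\frac{(n-1)^2}{n}}$. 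This is exactly the claimed shape, and the boundary term is of the same order.

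The main obstacle will be tracking constants so that the final factor is $6n$ uniformly in $n$. The $(n-1)!$ from the divisor bound is helpful here, but care is needed near small $k$, which is where $\log^+$ enters. I expect to split into two cases, $k\le e$ versus large $k$, and to use crude inequalities such as $(\log x+1)^{n-1}\le e^{n-1}\max(1,\log x)^{n-1}$, absorbed against $(n-1)!$. Alternatively, since the statement is quoted from \cite[Lemma~12]{Korneel}, one may simply cite it; the plan above reconstructs that argument.
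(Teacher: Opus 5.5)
Your strategy is the standard route to this bound: restrict to the $k$ smallest norms, use the coefficientwise domination $\zeta_K\preceq\zeta^n$ to get $A(x)\le D_n(x)$, then apply partial summation with a cutoff $X\approx k/\log^{n-1}k$. The paper gives no proof and simply quotes Debaene's Lemma~12. Your route does give the right shape $k^{1/n}\log(k)^{(n-1)^2/n}$.

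\textbf{The gap.} It lies in the only non-routine part, the uniform constant $6n$. The estimate you propose, $D_n(x)\le x(\log x+1)^{n-1}/(n-1)!$, is false. We have $D_n(1)=1>1/(n-1)!$ for $n\ge 3$, and $D_3(2)=4>(1+\log 2)^2\approx 2.87$. The induction cannot work, because the single term $d=1$ of $\sum_{d\le x}D_{n-1}(x/d)$ already uses up the whole inductive bound. The correct factorial version is $D_n(x)\le x(\log x+n-1)^{n-1}/(n-1)!$. With it, the $(n-1)!$ no longer absorbs the factor $e^{n-1}$ (or $2^{n-1}$) lost by replacing $1+\log x$ with $\max(1,\log x)$: for $\log x\lesssim n$ the bound is of size $x\,e^{O(n)}$.

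Your case split is also misplaced. For $e<k<(1+\log k)^{n-1}$, a range reaching roughly $(n\log n)^{n-1}$, the cutoff $X$ lies below $1$. For $k$ just above $e$, the factor $\bigl((1+\log k)/\log k\bigr)^{(n-1)^2/n}$ is about $2^{n-1}$, so partial summation cannot produce $6n$ there. A minor point: after truncating to $k$ ideals, the boundary term is $kT^{\frac{1-n}{n}}$, not $A(T)T^{\frac{1-n}{n}}$, which can be larger.

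\textbf{A repair along your lines.}
\begin{enumerate}
\item Use the easy bound $D_n(x)\le x(1+\log x)^{n-1}$, proved by induction with $\sum_{d\le x}1/d\le 1+\log x$.
\item Write the truncated sum as $\frac{n-1}{n}\int_1^\infty\min(k,A(x))\,x^{\frac{1-2n}{n}}\,dx$. Then for every $X\ge 1$ it is at most $(n-1)X^{1/n}(1+\log X)^{n-1}+kX^{\frac{1-n}{n}}$.
\item If $X:=k(1+\log k)^{1-n}\ge 1$, this gives $n\,k^{1/n}(1+\log k)^{\frac{(n-1)^2}{n}}$. In this range $\log k\ge (n-1)\log(1+\log k)$. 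So, apart from tiny $k$ where $\log^+k=1$ and a direct check suffices, we get $\frac{(n-1)^2}{n}\log\bigl(1+\frac{1}{\log k}\bigr)\le\frac{n-1}{n\log(1+\log k)}<\log 6$.
\item If $X<1$, use the trivial bound $k$. For $\log k\ge 1$, verify $\frac{n-1}{n}\log k\le\log(6n)+\frac{(n-1)^2}{n}\log\log k$. The difference of the two sides is concave in $\log k$, so checking the two endpoints of that range suffices; for $\log k<1$ the check $k^{\frac{n-1}{n}}\le 6n$ is immediate.
\end{enumerate}
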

	
	\section{Counting Theorem}\label{S:main}
	We can now prove our main theorem, which we restate here for convenience.
	\begin{mainth}
		For $t\geq \left(10 n^2 \sqrt{\Delta_K} \right)^n$, we have
        \begin{alignat}{3}
            \vert \sigma_K(t,\mathcal{C}) - t \, h_K^{-1}\kappa_K  \vert &\leq  t^{\frac{n-1}{n}}\,E_1(n)\, \omega_K^{-1} R_K , \tag{A1} \label{E:a1} \\
            \vert \sigma_K(t,\mathcal{C}) - t \, h_K^{-1}\kappa_K  \vert &\leq  t^{\frac{n-1}{n}}\,  E_2(n) \,  \omega_K^{-1}   R_K^{1/n}  \log^+\left(\left( 15n\, 2^{\frac{r}{2}} \right)^n R_K \right)^{\frac{(n-1)^2}{n}}  ,  \tag{A2} \label{E:a2} \\
			\vert \sigma_K(t) - t \, \kappa_K  \vert 
            &\leq t^{\frac{n-1}{n}}\, 6nE_1(n)\, \omega_K^{-1}   R_K \, h_K^{1/n}   \log^+(h_K)^{\frac{(n-1)^2}{n}}, \tag{B1} \label{E:b1} \\
			\vert \sigma_K(t) - t \, \kappa_K  \vert 
            &\leq t^{\frac{n-1}{n}}\, E_2(n)  \, \omega_K^{-1}   (R_K h_K)^{1/n}  \log^+\left(\left( 15n\, 2^{\frac{r}{2}} \right)^n R_K h_K \right)^{\frac{(n-1)^2}{n}} . \tag{B2} \label{E:b2}
        \end{alignat}
	Here, $\log^+(x) = \max(1,\log(x))$, and we can take 
    $$E_1(n)=\frac{45e}{2} n^{7/2} 2^{4.1  n} \quad \text{ and } \quad E_2(n)=2025e^2  n^{11/2} 2^{4n} \left(n-1  \right)^{\frac{n-1}{2}}.$$
	\end{mainth}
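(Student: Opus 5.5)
We apply Proposition \ref{P:LatLipCounting} to each summand of Proposition \ref{P:SigLat}, then sum over $s\in\mathcal{S}$. Replace $t$ by $t^{1/n}$, so we estimate $\sigma_K(t^n,\mathcal{C})$ for $t\geq 10n^2\sqrt{\Delta_K}$. For each $s$, take $X=\F(D_\delta,(0,1],\nu)$ and $\Lambda=\Psi_{\beta_s}A_t$. Since $\NN(\beta_s)=1$, we have $\vol(\Lambda)=\vol(A_t)=\sqrt{\Delta_K}2^{-r_2}t^{-n}$. By Proposition \ref{P:volF} and Remark \ref{R:+}, $\sum_s\vol(X)/\vol(\Lambda)=2^{r_1}\pi^{r_2}R_K\,2^{r_2}t^n/\sqrt{\Delta_K}=h_K^{-1}\kappa_K\omega_K t^n$. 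So the main terms add up exactly to $t^n h_K^{-1}\kappa_K$ after dividing by $\omega_K$. The error is
\[
\frac{1}{\omega_K}\sum_{s} M(\sqrt{n-1}L)^{n-1}2^n\vol(B(1))\frac{r(\Psi_{\beta_s}A_t)}{\vol(A_t)},
\]
with $M,L$ from Corollary \ref{C:LipConstF}.

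Next, Lemma \ref{L:RadMin} gives
\[
r(\Lambda)/\vol(\Lambda)\leq 2^{n-1}\sqrt{n}\,\vol(B(1))^{-1}\lambda_1(\Psi_{\beta_s}A_t)^{1-n}.
\]
The $\vol(B(1))$ factors cancel and $\vol(A_t)$ drops out. We must also verify the hypothesis $r(\Lambda)\le\sqrt{n-1}L$. Lemma \ref{L:MinBound} gives $\lambda_1\geq\sqrt{n/2}\,t^{-1}$ since $N(\mathfrak b)\ge1$, so
\[
r(\Lambda)\le 2^{n-1}\sqrt n\,\vol(B(1))^{-1}\sqrt{\Delta_K}\,2^{-r_2}t^{-n}(2/n)^{(n-1)/2}t^{n-1}\ll \sqrt{\Delta_K}/t.
\]
This is at most $\sqrt{n-1}\,c^{-1}$ provided $t\geq C(n)\,c\sqrt{\Delta_K}$. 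This is where the condition $t\ge 10n^2\sqrt{\Delta_K}$ enters, and it will force $c$ to be of size at most about $n$. We choose $c$ with $2\pi c\in\mathbb N$ and $c\approx\sqrt r$ or $\approx n2^{r/2}$, so that $\exp(\sqrt r/2c)$ and $\exp(14n2^{r/2}/c)$ stay bounded. Presumably this is what Lemma \ref{L:ugly} fixes. I expect reconciling the choice of $c$ with both the hypothesis condition and the bound on $|\mathcal S|$ to be the delicate bookkeeping.

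For (A1), use Lemma \ref{L:MinBound} termwise: $\lambda_1^{1-n}\le (2/n)^{(n-1)/2}t^{n-1}$. Summing gives a factor $|\mathcal S|$, which Lemma \ref{L:SBound} bounds by $e^{O(1)}c^r\sqrt{r_1+r_2}R_K$. Together with $M\le 6n\sqrt{r_1+r_2}c(2\pi c)^{r_2}2^{r_1}$ and $L^{n-1}\approx c^{1-n}$, the powers of $c$ combine to $c^{r+1+r_2-(n-1)}=c^{0}$ up to constants, since $r+1+r_2=n$. What remains is $2^{O(n)}n^{O(1)}$ times $(n-1)^{(n-1)/2}(2/n)^{(n-1)/2}$, which is at most $2^{(n-1)/2}$. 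This gives $E_1(n)$ and $R_K t^{n-1}$.

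For (A2), use Lemma \ref{L:MinSumBound} instead. The sum over $s$ is at most $|\mathcal S|R_K^{-1}(n/(n-1))^r\sum_j N(\mathfrak b_j)^{(1-n)/n}t^{n-1}$ with $k\le|\mathcal S|$. Lemma \ref{L:SumIdeals} bounds the ideal sum by $6n|\mathcal S|^{1/n}\log^+|\mathcal S|^{(n-1)^2/n}$. Inserting Lemma \ref{L:SBound} gives $|\mathcal S|\le (15n2^{r/2})^nR_K$ for the chosen $c$, which yields the log term. The factor $|\mathcal S|/R_K\cdot c^{1-n}$ is bounded by powers of constants, leaving $R_K^{1/n}$. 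Here $(n-1)^{(n-1)/2}$ remains uncancelled, which explains $E_2$.

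Finally, (B1) and (B2) follow by summing over the $h_K$ classes. For (B2), rerun the (A2) argument with the ideals $\mathfrak b_j$ collected over all classes: they are still distinct, with total count at most $h_K|\mathcal S|$, so Lemma \ref{L:SumIdeals} gives $(R_Kh_K)^{1/n}$. For (B1), sum (A1)'s per-$s$ bound over classes, keeping the $N(\mathfrak b)$ dependence. In each class $\mathcal C$, Lemma \ref{L:MinBound} gives $\lambda_1\geq\sqrt{n/2}\,N(\mathfrak b_{\mathcal C})^{1/n}t^{-1}$ for some $\mathfrak b_{\mathcal C}\in\mathcal C$. These ideals are distinct across classes, so Lemma \ref{L:SumIdeals} with $k=h_K$ gives the factor $6nh_K^{1/n}\log^+(h_K)^{(n-1)^2/n}$.
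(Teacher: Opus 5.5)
Your architecture matches the paper's proof. You use Proposition~\ref{P:SigLat}, apply Proposition~\ref{P:LatLipCounting} for each $s$ with the main term from Proposition~\ref{P:volF}, and pass to $\lambda_1$ via Lemma~\ref{L:RadMin}. You then take Lemma~\ref{L:MinBound} for (A1)/(B1) and Lemmas~\ref{L:MinSumBound} and \ref{L:SumIdeals} for (A2)/(B2), and your class-summing arguments are also fine. The gap is in the part that produces the explicit $E_1(n)$, $E_2(n)$ and the threshold, namely the choice of $c$. That is the content of Lemma~\ref{L:ugly}, which belongs to this proof and cannot be deferred with ``presumably''.

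First, your exponent count is wrong. $M(\sqrt{n-1}L)^{n-1}|\mathcal S|$ carries $c^{1+r_2}\cdot c^{-(n-1)}\cdot c^{r}=c^{1}$, because $1+r_2+r=n$; it is not $c^0$. So a factor of $c$ survives, and the (A1) error behaves like $c\exp\bigl((\sqrt r(n-1)+28n2^{r/2})/(2c)\bigr)$. Hence $c$ cannot simply be taken large. It must balance this expression, optimally at $c\approx(\sqrt r(n-1)+28n2^{r/2})/2\approx 14n2^{r/2}$; the paper takes $c\le 15n2^{r/2}$ with $2\pi c\in\mathbb N$. The resulting factor $15e\,n\,2^{r/2}$ is a visible piece of $E_1(n)$.

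Second, your claim that the hypothesis $r(\Psi_{\beta_s}A_t)\le\sqrt{n-1}L$ forces $c$ to be at most about $n$ is false, and if it were true the theorem would fail. Lemma~\ref{L:SBound} contributes $\exp(14n2^{r/2}/c)$, which for $c\lesssim n$ is doubly exponential in $n$, incompatible with $E_1,E_2$. What is missing is the size of your constant $C(n)$. By Stirling, $\vol(B(1))^{-1}\le\sqrt{2\pi}\,e^{1/(6n)}(n/2)^{(n+1)/2}(e\pi)^{-n/2}$. Combined with Lemmas~\ref{L:RadMin} and \ref{L:MinBound} (and $n-1-r_2=r$) this gives
\[
r(\Psi_{\beta_s}A_t)\le \tfrac{\sqrt{2\pi}\,e^{1/(6n)}}{2}\,n^{3/2}\,2^{r}(e\pi)^{-n/2}\sqrt{\Delta_K}\,t^{-1},
\]
which is exponentially small in $n$. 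Using $L\ge c^{-1}$ and $c\le 15n2^{r/2}$, the hypothesis holds once
\[
t\ge \tfrac{15\sqrt{2\pi}\,e^{1/(6n)}}{2}\,\tfrac{n^{5/2}}{\sqrt{n-1}}\,2^{3r/2}(e\pi)^{-n/2}\sqrt{\Delta_K}.
\]
Since $r\le n-1$, we have $2^{3r/2}(e\pi)^{-n/2}\le 2^{-3/2}\bigl(2^{3/2}/\sqrt{e\pi}\bigr)^{n}$, and $2^{3/2}<\sqrt{e\pi}$. A short check then shows the right-hand side is at most $10n^2\sqrt{\Delta_K}$ for all $n\ge 2$. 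This is what reconciles the large $c\approx n2^{r/2}$ demanded by the bound on $|\mathcal S|$ with the stated threshold. Without it, your sketch neither fixes $c$ nor verifies the threshold.
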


	\begin{proof}
		By Proposition \ref{P:SigLat}, we have
		$$
		\sigma_K(t^n,\mathcal{C}) = \frac{1}{\omega_K} \sum_{s \in \mathcal{S}} \vert (\Psi_{\beta_s} A_t) \cap \F( D_{\delta},\left(0,1\right],\nu) \vert,
		$$
        and, from Corollary \ref{C:LipConstF}, we know that 
		$\partial  \F( D_{\delta},\left(0,1\right],\nu)$ is of Lipschitz class $\operatorname{Lip}\left(n-1,M,L \right)$
		with
		$M\leq6n \sqrt{r_1+r_2} c  \left(2\pi c\right)^{r_2} 2^{r_1} $
		and
		$L=\exp \left( \frac{\sqrt{r}}{2c} \right) c^{-1}$. We use Proposition \ref{P:LatLipCounting} to estimate
		$$
		\bigg\vert \vert (\Psi_{\beta_s} A_t) \cap \F( D_{\delta},\left(0,1\right],\nu) \vert - \frac{\vol(\F( D_{\delta},\left(0,1\right],\nu))}{\vol(\Psi_{\beta_s} A_t)} \bigg\vert \leq M \  \left(\sqrt{n-1} L \right)^{n-1} 2^n \vol \left(B (1) \right)  \frac{r(\Psi_{\beta_s} A_t)}{\vol(\Psi_{\beta_s} A_t)} ,
		$$
        which is conditional on $r(\Psi_{\beta_s} A_t) \leq \sqrt{n-1} L$.
        This condition gives us a lower bound on $t$, which depends on the choice of the parameter $c$. We will first calculate our error term, optimising the choice of $c$, and afterwards calculate the lower bound on $t$ that this choice gives.
        
		Combining Proposition \ref{P:volF} and $\vol(\Psi_{\beta_s} A_t)=\vol(A_t)=\frac{\sqrt{\Delta_K}}{2^{r_2}  t^n}$, we can see that the main term in our estimation of $\sigma_K(t^n,\mathcal{C})$ is $t^n \, h_K^{-1}\kappa_K$. Using  Lemma \ref{L:RadMin} to bound $r(\Psi_{\beta_s} A_t)$ in terms of $\lambda_1(\Psi_{\beta_s} A_t)$ we obtain
		$$
		\vert \sigma_K(t^n,\mathcal{C}) - t^n \, h_K^{-1}\kappa_K  \vert \leq \omega_K^{-1}  M  \left(\sqrt{n-1} L \right)^{n-1}  2^{2n-1} \sqrt{n} \sum_{s \in \mathcal{S}} \lambda_1(\Psi_{\beta_s} A_t)^{1-n}.
		$$
		We can use two different approaches to bound the sum
		$\sum_{s \in \mathcal{S}} \lambda_1(\Psi_{\beta_s} A_t)^{1-n}$.
        
		The first is using Lemma \ref{L:MinBound}, which yields
		$$
		\sum_{s \in \mathcal{S}} \lambda_1(\Psi_{\beta_s} A_t)^{1-n} \leq  \left(\frac{2}{n}\right)^{\frac{n-1}{2}}  \vert \mathcal{S}  \vert  N(\mathfrak{b}_\mathcal{C})^{\frac{1-n}{n}}  t^{n-1},
		$$
		where $\mathfrak{b}_\mathcal{C} \subseteq \mathcal{O}_K$ is an integral ideal in the class $\mathcal{C}$.

        Now we substitute our values for $M$ and $L$, use the bound for $\vert \mathcal{S}\vert$ from Lemma \ref{L:SBound}, and optimise the choice of our parameter $c$. We outsource the simplification of these terms to Lemma \ref{L:ugly}. Inequality \eqref{E:c1} from the lemma gives
        $$
        \vert \sigma_K(t,\mathcal{C}) - t \, h_K^{-1}\kappa_K  \vert \leq  t^{\frac{n-1}{n}}\,E_1(n) \omega_K^{-1} R_K    N (\mathfrak{b}_\mathcal{C})^{\frac{1-n}{n}} .
        $$
		
        Bounding $N(\mathfrak{b}_\mathcal{C})^{\frac{1-n}{n}}\leq 1$, we obtain our first bound \eqref{E:a1}.
        When we are summing over the different ideal classes to estimate $\sigma_K(t^n)$, we use Lemma \ref{L:SumIdeals} to bound the sum over $h_K$ distinct ideals, i.e.,
		$$
		\sum_{\mathcal{C} \in \operatorname{Cl}(\OO_K)}  N(\mathfrak{b}_\mathcal{C})^{\frac{1-n}{n}} \leq 6n h_K^{1/n} \log^+(h_K)^{\frac{(n-1)^2}{n}}.
		$$
		This then yields result \eqref{E:b1}.
		
		The second option for bounding $\sum_{s \in \mathcal{S}} \lambda_1(\Psi_{\beta_s} A_t)^{1-n}$ is to use Lemma \ref{L:MinSumBound}, which guarantees that we can find distinct integral ideals $\mathfrak{b}_1,\dots,\mathfrak{b}_k \in \mathcal{C}$ with $k\leq \vert \mathcal{S} \vert$, such that
		$$
		\sum_{s\in \mathcal{S}}\lambda_1(\Psi_{\beta_s} A_t)^{1-n}\leq 
		 \exp \left( \frac{(n-1) \sqrt{r}}{2c}\right) \frac{\vert S \vert}{R_K} \left(\frac{n}{n-1}\right)^r
		 \sum_{j=1}^k N(\mathfrak{b}_j)^{\frac{1-n}{n}}  t^{n-1}.
		$$
        We estimate the sum over $k$ distinct ideals using Lemma \ref{L:SumIdeals}, i.e.,
        $$
        \sum_{j=1}^{k} N(\mathfrak{b}_j)^\frac{1-n}{n} \leq 6n k^{1/n} \log^+(k)^{\frac{(n-1)^2}{n}}\leq 6n \vert \mathcal{S}\vert^{1/n} \log^+(\vert \mathcal{S}\vert)^{\frac{(n-1)^2}{n}}.
        $$
        Using this together with our values and bounds for $M,L,c,$ and $\vert \mathcal{S}\vert$ gives rise to result \eqref{E:a2}. Again we outsource the uninspiring calculations to Lemma \ref{L:ugly} (see inequality \eqref{E:c2}).
        
		When we use this result to estimate $\sigma_K(t^n)$, we will again apply Lemma \ref{L:SumIdeals}, but this time we sum over $k  h_K$ distinct ideals.
		This yields the last estimation \eqref{E:b2}.

        Finally, we check, using Lemmas \ref{L:RadMin} and \ref{L:MinBound}, that with our choice of the parameter $c$, we have 
        $r(\Psi_{\beta_s} A_t) \leq \sqrt{n-1} L$ as soon as $t\geq 10 n^2 \sqrt{\Delta_K}$. This calculation is the last part of Lemma \ref{L:ugly}.
	\end{proof}

    \begin{lemma}\label{L:ugly}
    With
    $M \leq  6n\sqrt{r_1+r_2} c \left(2\pi c\right)^{r_2} 2^{r_1} $
		,
		$L=\exp \left( \frac{\sqrt{r}}{2c} \right) c^{-1}$ as in Corollory \ref{C:LipConstF}, and
        $\vert S\vert \leq \exp\left(\frac{14 n 2^{\frac{r}{2}}}{c} \right)  c^r \sqrt{r_1+r_2} R_K$ from Lemma \ref{L:SBound},
        we can choose the constant $c\geq 1$, such that $2\pi c\in \mathbb{N}$,
    \begin{equation}\tag{C1} \label{E:c1}
    \omega_K^{-1}  M  \left(\sqrt{n-1} L \right)^{n-1}  2^{2n-1} \sqrt{n}   \left(\frac{2}{n}\right)^{\frac{n-1}{2}}  \vert \mathcal{S}  \vert  N(\mathfrak{b}_\mathcal{C})^{\frac{1-n}{n}}  t^{n-1} 
    \leq E_1(n)   \frac{R_K}{\omega_K}   N(\mathfrak{b}_\mathcal{C})^{\frac{1-n}{n}}  t^{n-1}
    \end{equation}
    with $E_1(n)=\frac{45e}{2} n^{7/2} 2^{4.1  n}$, and
    \begin{alignat}{2}\tag{C2} \label{E:c2}
        &\omega_K^{-1}  M  \left(\sqrt{n-1} L \right)^{n-1}  2^{2n-1} \sqrt{n}  \exp \left( \frac{(n-1) \sqrt{r}}{2c}\right) \frac{\vert S \vert}{R_K} \left(\frac{n}{n-1}\right)^r
		 6n \vert \mathcal{S}\vert^{1/n} \log^+(\vert \mathcal{S}\vert)^{\frac{(n-1)^2}{n}}  t^{n-1} \\
         &\leq  E_2(n)
		  \omega_K^{-1}R_K^{1/n} \log^+\left( \left( 15n 2^{\frac{r}{2}} \right)^n R_K \right)^{\frac{(n-1)^2}{n}}  t^{n-1} \notag
    \end{alignat}
    with $E_2(n)=2025e^2  n^{11/2} 2^{4n} \left(n-1  \right)^{\frac{n-1}{2}}$.

    With this choice of $c$, we can ensure that $r(\Psi_{\beta_s} A_t) \leq \sqrt{n-1} L$ as soon as $t\geq 10 n^2 \sqrt{\Delta_K}$.
    \end{lemma}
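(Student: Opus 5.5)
The whole lemma reduces to bookkeeping once $c$ is fixed, so I would first fix $c$ and then track the powers of $c$. Set $c$ to be the smallest number with $2\pi c\in\mathbb{N}$ and $c\geq 14n\,2^{\frac{r}{2}}$. Concretely, $c=\lceil 2\pi\cdot 14 n 2^{r/2}\rceil/(2\pi)$, which gives $14n2^{\frac r2}\le c\le 14n2^{\frac r2}+1\le 15n2^{\frac r2}$. This choice forces $\exp(14n2^{\frac r2}/c)\le e$ in Lemma \ref{L:SBound}. It also makes the factors $\exp(\sqrt r/(2c))^{n-1}$ and $\exp((n-1)\sqrt r/(2c))$ harmless: since $(n-1)\sqrt r/(2c)\le n^{3/2}/(28n)$, they are bounded by a small absolute constant, or by $e$ after the crude estimates below.

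For (C1), the key observation is that the powers of $c$ nearly cancel. We have $M\lesssim c^{1+r_2}$, $L^{n-1}\lesssim c^{1-n}$ and $|\mathcal S|\lesssim c^{r}R_K$, and
$$(1+r_2)+(1-n)+(r_1+r_2-1)=1+r_1+2r_2-n=1,$$
so the product is linear in $c$. The $c$-dependence therefore contributes only $c\le 15n2^{\frac r2}\le 15 n 2^{n/2}$.

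The remaining factors are $6n\sqrt{r_1+r_2}$, $(2\pi)^{r_2}2^{r_1}\le (2\pi)^{n/2}$, $(n-1)^{\frac{n-1}{2}}\,(2/n)^{\frac{n-1}{2}}\le 2^{\frac{n-1}2}$, $2^{2n-1}\sqrt n$ and $e$. I would collect all of these and check that $(2\pi)^{n/2}2^{n/2}2^{n/2}2^{2n}\le 2^{4.1n}$ up to the polynomial factor $n^{7/2}$ and the constants $45e/2$. This is pure arithmetic with $\log_2(2\pi)\approx2.65$: the exponent is $1.33n+0.5n+0.5n+2n\approx 4.33n$. That does not fit under $4.1n$ naively, so I would use $r_2\le n/2$ together with $2^{r_1}$ more carefully, since $(2\pi)^{r_2}2^{r_1}$ is largest when $r_2=n/2$, and I would absorb $2^{r/2}\le 2^{(n-2)/2}$ rather than $2^{n/2}$. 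If the constant still does not fit, that is where I would look again.

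For (C2), the powers of $c$ from $M L^{n-1}\,|\mathcal S|^{1+1/n}$ give
$$c^{\,r_2+2-n+r(1+1/n)}=c^{1+r/n}.$$
I would bound $c^{r/n}\le (15n2^{\frac r2})^{r/n}$ and combine it with $R_K^{1+1/n}/R_K=R_K^{1/n}$. For the logarithm I would use $|\mathcal S|\le e\,c^r\sqrt{r_1+r_2}R_K\le (15n2^{\frac r2})^nR_K$, which holds because $c^r e\sqrt{r_1+r_2}\le c^{n}$ when $c\ge 14$ and $r\leq n-1$. Monotonicity of $\log^+$ then gives the stated log factor.

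The factor $(n/(n-1))^r\le e$ and the extra factor $6n$ feed into $E_2(n)$. The factor $(n-1)^{\frac{n-1}2}$ now survives without the compensating $(2/n)^{\frac{n-1}2}$, which is why it appears in $E_2$. The leftover $(2^{r/2})^{r/n}\le 2^{n/2}$ together with $(2\pi)^{r_2}2^{r_1}$ and $2^{2n}$ is again checked against $2^{4n}$ times the polynomial factors. This step is the crude part, so I expect the verification of $E_2$ to be easier than that of $E_1$.

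Finally, for the condition $r(\Psi_{\beta_s}A_t)\le\sqrt{n-1}L$, I would combine Lemma \ref{L:RadMin}, $\vol(\Psi_{\beta_s}A_t)=\sqrt{\Delta_K}/(2^{r_2}t^n)$ and Lemma \ref{L:MinBound} with $N(\mathfrak b)\ge1$. This gives
$$r(\Psi_{\beta_s}A_t)\le \frac{2^{n-1}\sqrt n\,(2/n)^{\frac{n-1}2}\sqrt{\Delta_K}}{\vol(B(1))\,2^{r_2}\,t}.$$
Using $L\ge c^{-1}\ge (15n2^{\frac r2})^{-1}$ and $\vol(B(1))=\pi^{n/2}/\Gamma(n/2+1)$, it suffices that
$$t\ \ge\ 15n2^{\frac r2}\,\frac{2^{n-1}\sqrt n\,(2/n)^{\frac{n-1}2}\,\Gamma(\tfrac n2+1)}{\sqrt{n-1}\,\pi^{n/2}}\sqrt{\Delta_K}.$$
By Stirling, $\Gamma(\frac n2+1)(2/n)^{\frac{n-1}2}\lesssim \sqrt n\,(1/e)^{n/2}$, so the $n$-exponential part is $(2^{3/2}/\sqrt{2\pi e}\,)^n$ up to $2^{-r_2/2}$, which is $<1$. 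The right-hand side is therefore at most $10n^2\sqrt{\Delta_K}$, and I expect this last check to go through with room to spare.

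The main obstacle is keeping the explicit constants inside the claimed $2^{4.1n}$ in (C1). For that I would do the exponent count in terms of $r_1,r_2$ exactly before bounding anything by $n$.
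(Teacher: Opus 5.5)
You are following the paper's proof. The powers of $c$ collapse to $c^{1}$ in (C1) and to $c^{1+r/n}$ in (C2). A $c$ of size about $14n2^{r/2}$ with $2\pi c\in\mathbb{N}$ tames the exponentials, $\vert\mathcal S\vert\le(15n2^{r/2})^nR_K$ controls the logarithm, and Stirling gives the condition on $t$. Your choice of $c$ is fine, provided you keep the $2^{r/2}$ in $\frac{(n-1)\sqrt r}{2c}\le\frac{\sqrt r}{28\cdot 2^{r/2}}<0.03$; your bound $\sqrt n/28$ is not an absolute constant. With that, $c\exp\bigl(\frac{2(n-1)\sqrt r+28n2^{r/2}}{2c}\bigr)\le 15en2^{r/2}$, as in the paper.

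The gap is that the real content of the lemma, the power of $2$, is left unverified, and your diagnosis of it is off. In (C1), maximising factors separately fails because $(2\pi)^{r_2}2^{r_1}$ peaks at $r_1=0$ while $2^{r/2}$ peaks at $r_2=0$. Replacing $2^{n/2}$ by $2^{(n-2)/2}$ does not repair this. Instead, substitute $n=r_1+2r_2$, $r=r_1+r_2-1$ and $\bigl(\frac{2(n-1)}{n}\bigr)^{\frac{n-1}{2}}\le 2^{\frac{n-1}{2}}$ before bounding anything. This gives $2^{r_1}2^{2n-1}2^{\frac{n-1}{2}}2^{\frac r2}(2\pi)^{r_2}=\frac14\,2^{4r_1+(\frac{11}{2}+\log_2 2\pi)r_2}$. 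So the constant is at most $\frac{45e}{2}n^{7/2}2^{4r_1+(\frac{11}{2}+\log_2 2\pi)r_2}$, which is $\le E_1(n)$ because $\frac{11}{2}+\log_2(2\pi)\approx 8.15\le 8.2$.

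For (C2) you expect the crude count to suffice, but your list omits the $2^{r/2}$ coming from the linear factor $c$. With it included, the crude exponent is again about $4.33n>4n$. The exact count, using $\vert\mathcal S\vert^{1/n}\le 15n2^{r/2}R_K^{1/n}$ and $(\frac{n}{n-1})^r\le e$, gives $2025e^2n^{11/2}(n-1)^{\frac{n-1}{2}}2^{4r_1+(5+\log_2 2\pi)r_2}$. This is $\le E_2(n)$ only because $2\pi<8$. For totally real fields there is no exponential slack at all, so (C2) is not the easier case.

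Your last step reaches the right conclusion through wrong intermediate estimates. Stirling gives $\Gamma(\frac n2+1)(\frac 2n)^{\frac{n-1}{2}}\le\sqrt{\pi/2}\,e^{\frac{1}{6n}}\,n\,e^{-n/2}$, a factor $n$ rather than $\sqrt n$. The exponential base is $2^{3/2}/\sqrt{\pi e}\approx 0.968$, not $2^{3/2}/\sqrt{2\pi e}$. With $c\le 15n2^{r/2}$ the threshold is then at most $6.65\,e^{\frac{1}{6n}}\sqrt{\frac{n}{n-1}}\,\bigl(2^{3/2}/\sqrt{\pi e}\bigr)^n n^2\sqrt{\Delta_K}$, which is about $9.6\,n^2\sqrt{\Delta_K}$ at $n=2$. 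So $10n^2\sqrt{\Delta_K}$ suffices, exactly as in the paper's computation, but not with room to spare.
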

    \begin{proof}
    We start by simplifying some terms. Let $Q=M  \left(\sqrt{n-1} L \right)^{n-1}  \vert \mathcal{S}  \vert  2^{2n-1} \sqrt{n}$.
    Then
    \begin{alignat*}{2}
			Q &\leq   6n \sqrt{r_1+r_2} c \left(2\pi c\right)^{r_2} 2^{r_1}  \left(\sqrt{n-1} \exp \left( \frac{\sqrt{r}}{2c} \right) c^{-1} \right)^{n-1}  \exp\left(\frac{14 n 2^{\frac{r}{2}}}{c} \right)  c^r \sqrt{r_1+r_2} R_K  2^{2n-1} \sqrt{n} \\
            &=3n^{3/2} (r_1+r_2) \left(2\pi\right)^{r_2} 2^{2n+r_1}  \left(n-1  \right)^{\frac{n-1}{2}}  c \exp \left(\frac{\sqrt{r}(n-1) +28 n 2^{\frac{r}{2}}}{2c} \right)  R_K.
		\end{alignat*}
        Now we can finally see what a good choice for our parameter $c$ is. We want to minimise a term of the form $c\exp(ac^{-1})$. This is minimised for $c=a$, where the value attained is $e a$.

        So we want to choose $c$ on the order of $\frac{\sqrt{r}(n-1) +28 n 2^{\frac{r}{2}}}{2}$. Recall that, in Subsection \ref{Ss:LipComp}, we imposed the restriction $2\pi c\in \mathbb{N}$. But even with this additional constraint, we can certainly choose $c$, such that $\frac{\sqrt{r}(n-1) +28 n 2^{\frac{r}{2}}}{2} \leq c \leq 15n 2^{\frac{r}{2}}$, and obtain
        $$
    c \exp \left(\frac{2\sqrt{r}(n-1) +28 n 2^{\frac{r}{2}}}{2c} \right) \leq 15 e n  2^{r/2}.
        $$
        With this, our bound for $Q$ becomes
        \begin{alignat*}{2}
        Q &\leq   45e n^{5/2} (r_1+r_2) \left(2\pi\right)^{r_2} 2^{r_1 +2n +r/2}  \left(n-1  \right)^{\frac{n-1}{2}}    R_K  \exp \left( \frac{-\sqrt{r}(n-1)}{2c} \right)\\
        &\leq 45e n^{7/2} \left(2\pi\right)^{r_2} 2^{r_1 +2n +r/2}  \left(n-1  \right)^{\frac{n-1}{2}}    R_K  \exp \left( \frac{-\sqrt{r}(n-1)}{2c} \right) .
        \end{alignat*}
    This gives us our bound \eqref{E:c1}
    \begin{alignat*}{2}
    &\omega_K^{-1}  M  \left(\sqrt{n-1} L \right)^{n-1}  2^{2n-1} \sqrt{n}   \left(\frac{2}{n}\right)^{\frac{n-1}{2}}  \vert \mathcal{S}  \vert  N(\mathfrak{b}_\mathcal{C})^{\frac{1-n}{n}}  t^{n-1} \\
    &=
    Q \omega_K^{-1}  \left(\frac{2}{n}\right)^{\frac{n-1}{2}}  N(\mathfrak{b}_\mathcal{C})^{\frac{1-n}{n}}  t^{n-1}\\
    &\leq 45e n^{7/2} \left(2\pi\right)^{r_2} 2^{r_1 +2n +r/2}  \left(n-1  \right)^{\frac{n-1}{2}}    \frac{R_K}{\omega_K}  \left(\frac{2}{n}\right)^{\frac{n-1}{2}}  N(\mathfrak{b}_\mathcal{C})^{\frac{1-n}{n}}  t^{n-1}\\
    &= \frac{45e}{2} n^{7/2} 2^{4r_1 + \left( \frac{\log 2\pi}{\log 2} +\frac{11}{2}\right) r_2}    \frac{R_K}{\omega_K}  \left(\frac{n-1}{n}\right)^{\frac{n-1}{2}}  N(\mathfrak{b}_\mathcal{C})^{\frac{1-n}{n}}  t^{n-1}\\
    &\leq \frac{45e}{2} n^{7/2} 2^{4.1 n}    \frac{R_K}{\omega_K}   N(\mathfrak{b}_\mathcal{C})^{\frac{1-n}{n}}  t^{n-1}.
    \end{alignat*}
    
    Now we turn our attention to \eqref{E:c2}.
    With our choice of $c$, we obtain an upper bound for the size of $\mathcal{S}$, i.e.,
    $$
    \vert \mathcal{S}\vert \leq  \exp\left(\frac{14 n 2^{\frac{r}{2}}}{c} \right)  c^r \sqrt{r_1+r_2} R_K \leq e  \left( 15n 2^{\frac{r}{2}} \right)^r \sqrt{r_1+r_2} R_K \leq \left( 15n 2^{\frac{r}{2}} \right)^n R_K.
    $$
    Using this, we obtain inequality \eqref{E:c2}
    \begin{alignat*}{2}
        &\omega_K^{-1}  M  \left(\sqrt{n-1} L \right)^{n-1}  2^{2n-1} \sqrt{n}  \exp \left( \frac{(n-1) \sqrt{r}}{2c}\right) \frac{\vert S \vert}{R_K} \left(\frac{n}{n-1}\right)^r
		 6n \vert \mathcal{S}\vert^{1/n} \log^+(\vert \mathcal{S}\vert)^{\frac{(n-1)^2}{n}}  t^{n-1} \\
         &= \omega_K^{-1} R_K^{-1}  Q  \exp \left( \frac{(n-1) \sqrt{r}}{2c} \right) \left(\frac{n}{n-1}\right)^r
		 6n \vert \mathcal{S}\vert^{1/n} \log^+(\vert \mathcal{S}\vert)^{\frac{(n-1)^2}{n}}  t^{n-1} \\
         &\leq \omega_K^{-1} 45e n^{7/2} \left(2\pi\right)^{r_2} 2^{r_1 +2n +r/2}  \left(n-1  \right)^{\frac{n-1}{2}}  
          \left(\frac{n}{n-1}\right)^r
		 6n  15n 2^{\frac{r}{2}} R_K^{1/n} \log^+\left( \left( 15n 2^{\frac{r}{2}} \right)^n R_K \right)^{\frac{(n-1)^2}{n}}  t^{n-1} \\
         &=2025e  n^{11/2}
         2^{4r_1+\left(\frac{\log 2\pi}{\log 2} + 5 \right)r_2}
         \left(n-1  \right)^{\frac{n-1}{2}}  
          \left(\frac{n}{n-1}\right)^r
		  \omega_K^{-1}R_K^{1/n} \log^+\left( \left( 15n 2^{\frac{r}{2}} \right)^n R_K \right)^{\frac{(n-1)^2}{n}}  t^{n-1} \\
         &\leq 2025e^2  n^{11/2}
         2^{4n}
         \left(n-1  \right)^{\frac{n-1}{2}} 
		  \omega_K^{-1}R_K^{1/n} \log^+\left( \left( 15n 2^{\frac{r}{2}} \right)^n R_K \right)^{\frac{(n-1)^2}{n}}  t^{n-1}.
    \end{alignat*}

    Finally, we look at the inequality  $r(\Psi_{\beta_s} A_t) \leq \sqrt{n-1} L$. We use Lemmas \ref{L:RadMin}, and \ref{L:MinBound} to bound
    
    $$r(\Psi_{\beta_s} A_t) \leq \frac{ 2^{n-1} \sqrt{n} }{ \vol(B(1))} \frac{\vol(\Psi_{\beta_s} A_t)}{\lambda_1(\Psi_{\beta_s} A_t)^{n-1}} \leq \frac{ 2^{n-1} \sqrt{n} }{ \vol(B(1))} \frac{\sqrt{\Delta_K}}{2^{r_2}  t^n} \left(\frac{2}{n}\right)^{\frac{n-1}{2}} t^{n-1}.$$
    Using Stirling's approximation of the Gamma-function, we can bound $\vol(B(1))^{-1}$ and simplify some terms to obtain
    $$
    r(\Psi_{\beta_s} A_t) \leq (e\pi)^{\frac{-n}{2}}   \left(\frac{n}{2}\right)^{\frac{n+1}{2}} \sqrt{2\pi} \,e^{\frac{1}{6n}} 2^{n-1} \sqrt{n}  \frac{\sqrt{\Delta_K}}{2^{r_2}  t^n} \left(\frac{2}{n}\right)^{\frac{n-1}{2}} t^{n-1} 
    =
    \frac{\sqrt{2\pi} \,e^{\frac{1}{6n}}}{2} n^{3/2}(e\pi)^{\frac{-n}{2}}    2^{r}   \sqrt{\Delta_K}\, t^{-1}.
    $$
    On the other hand, with the bound $c \leq 15n 2^{\frac{r}{2}}$, we have 
    $$
    \sqrt{n-1}L=\sqrt{n-1}\exp \left( \frac{\sqrt{r}}{2c} \right) c^{-1} \geq  \sqrt{n-1} \exp \left( \frac{\sqrt{r}}{30n 2^{\frac{r}{2}}} \right) \left (15n 2^{\frac{r}{2}} \right)^{-1}.
    $$

    Thus, we can ensure that $r(\Psi_{\beta_s} A_t) \leq \sqrt{n-1} L$ as soon as $t$ is bigger than
    
    $$
    \frac{\sqrt{2\pi} \,e^{\frac{1}{6n}}}{2} n^{3/2}(e\pi)^{\frac{-n}{2}}  2^{r}  \sqrt{\Delta_K} \sqrt{n-1}^{-1} \exp \left( \frac{-\sqrt{r}}{30n 2^{\frac{r}{2}}} \right) \left (15n 2^{\frac{r}{2}} \right),
    $$
    which we can bound above by the simpler term $10 n^2 \sqrt{\Delta_K}$.
    \end{proof}

	\section{Discussion}\label{S:discuss}

    \begin{remark}
    We shortly discuss how our two estimates \eqref{E:b1} and \eqref{E:b2} from Theorem \ref{T:main} compare to each other. In particular, we want to discuss under which circumstances one or the other estimate can be expected to perform better.

    Given a single number field $K$, where all invariants $n,\omega_K,h_K$, and $R_K$ are computable, one can, of course, just compare directly. However we want to remark that there are some bounds on the residue $\kappa_K$, such as Louboutin's bound \cite{louboutin}
        $$
	    \kappa_K \leq \left(\frac{e\log(\Delta_K)}{2(n-1)} \right)^{n-1},
	    $$
    with which we can roughly estimate $h_KR_K\sim \sqrt{\Delta_K}$. We can use this to estimate the main contributors of the error term in terms of the degree $n$ and the discriminant $\Delta_K$. We can expect that the constant of the error term in \eqref{E:b1} is very roughly on the order of
    $2^{4.1 n}\sqrt{\Delta_K}$,
    while the constant of the error term in \eqref{E:b2} is rather on the order of $n^{2.5n} \Delta_K^{\frac{1}{2n}}$. 

    So, heuristically, we can expect \eqref{E:b1} to perform better for ``small'' discriminants $\Delta_K \ll n^n$, while \eqref{E:b2} is better suited for larger discriminants $\Delta_K\gg n^{n^2}$.
    \end{remark}

    The comparison to Debaene's bound (Theorem \ref{T:Debaene}) is rather simple. We managed to reduce the dependence of the error term on the degree of the extension, while retaining the same dependence on the regulator $R_K$ and the class number $h_K$. However, our bound only holds for $t\geq \left(10 n^2 \sqrt{\Delta_K} \right)^n$, while Debaene's result holds for all $t\geq 1$.

    Lastly, we want to discuss how our result compares to the following explicit version of Landau's estimate $\vert \sigma_K(t) -  t \kappa_K \vert \ll t^{\frac{n-1}{n+1}}$, which is due to Lee.

    \begin{theorem}[{\cite[Theorem~1.2]{lee_number_2023}}]\label{T:Lee}
        For $t>0$ and $n\geq 2$, we have
        $$
        \vert\sigma_K(t)-t \,\kappa_K \vert  \leq   t^{\frac{n-1}{n+1}} \Lambda_K(n) \Delta_K^{\frac{1}{n+1}} \log(\Delta_K)^{n-1}.
        $$
    \end{theorem}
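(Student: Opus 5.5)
The statement is the explicit form of Landau's estimate, quoted from \cite{lee_number_2023}. It is proved with complex analysis rather than the lattice methods of this paper. I would follow Landau's original argument and track every constant.

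\textbf{Step 1 (Riesz mean).} Fix an integer $k\geq 1$, to be chosen at least of size about $n/2$. Consider the Riesz mean
$$I_k(t)=\frac{1}{k!}\sum_{N(\mathfrak{b})\leq t}\bigl(t-N(\mathfrak{b})\bigr)^k=\frac{1}{2\pi i}\int_{(2)}\zeta_K(s)\,\frac{t^{s+k}}{s(s+1)\cdots(s+k)}\,\mathrm{d}s .$$
I would shift the contour to a line $\operatorname{Re}s=-\eta$ with $\eta>0$ small, or to $\eta=1/2$ if that simplifies constants. The residues are:
\begin{enumerate}
\item at $s=1$, the main term $\kappa_K t^{k+1}/(k+1)!$;
\item at $s=0$, a term $\zeta_K(0)t^k/k!$;
\item at the poles $s=-1,\dots,-k$, further terms that vanish or are small because of the trivial zeros of $\zeta_K$.
\end{enumerate}

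\textbf{Step 2 (bounding the shifted integral).} On the left line I would use the functional equation together with the Stirling bound for the gamma factors. This gives
$$\vert\zeta_K(-\eta+i\tau)\vert\ll \Delta_K^{\frac12+\eta}(1+\vert\tau\vert)^{n(\frac12+\eta)}\vert\zeta_K(1+\eta-i\tau)\vert.$$
The last factor is at most $\zeta(1+\eta)^n$. The shifted integral then converges absolutely once $k+1>n(\tfrac12+\eta)+1$, and it is $O(\Delta_K^{1/2+\eta}t^{k-\eta})$ with an explicit implied constant. Bounding $\zeta_K(0)$ and the remaining residues needs explicit values or bounds at non-positive integers. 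I expect the factor $\log(\Delta_K)^{n-1}$ in the statement to enter here, through bounds on $\kappa_K$ and on $\zeta_K$ near $s=0,1$ of the type of Louboutin's bound, with an optimal choice $\eta\asymp 1/\log\Delta_K$.

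\textbf{Step 3 (de-smoothing).} Apply the $k$-th forward difference with step $h$ to $I_k$. Because $\sigma_K$ is nondecreasing,
$$h^k\sigma_K(t)\leq \Delta_h^kI_k(t)\leq h^k\sigma_K(t+kh).$$
The difference of the main term is $\kappa_K h^k\bigl(t+O(kh)\bigr)$. The difference of the error term costs a factor $2^k$, so
$$\sigma_K(t)-\kappa_K t\ll \kappa_K h+\frac{\Delta_K^{1/2+\eta}\,t^{k-\eta}}{h^k}.$$
The lower bound follows in the same way by evaluating at $t-kh$.

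\textbf{Step 4 (balancing).} Choose $h$ to balance the two terms in Step 3. With Landau's choice of parameters this gives exponent $t^{(n-1)/(n+1)}$ and turns $\Delta_K^{1/2}$ into $\Delta_K^{1/(n+1)}$. All $n$-dependent constants are collected into $\Lambda_K(n)$.

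\textbf{Main obstacle.} I expect the hardest step to be getting a uniform, fully explicit convexity bound for $\zeta_K$ on the shifted line. This means explicit Stirling estimates for $\Gamma(s/2)^{r_1}\Gamma(s)^{r_2}$ and careful choices of $\eta$ and $k$. The aim is to keep $\Lambda_K(n)$ independent of $\Delta_K$, so that only $\Delta_K^{1/(n+1)}\log(\Delta_K)^{n-1}$ remains. For small $t$ the inequality is trivial, since both sides are controlled by crude bounds, so these constants matter only in the range $t\geq 1$.
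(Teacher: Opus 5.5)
The paper gives no proof of this statement. It is imported from Lee and used only for the comparison in Example~\ref{Ex:zeta}, so your sketch has to stand on its own as a reconstruction of Landau's method, as made explicit by Sunley and Lee. It has a genuine gap in Steps 2--4: feeding only the convexity bound for $\zeta_K$ on a vertical line, integrated in absolute value, into Riesz means and $k$-th differences cannot produce the exponent $\frac{n-1}{n+1}$.

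Balancing your own Step 3 inequality gives an error of size
\[
\kappa_K^{\frac{k}{k+1}}\,\Delta_K^{\frac{1/2+\eta}{k+1}}\,t^{\frac{k-\eta}{k+1}}.
\]
Under your convergence condition $k>n(\frac{1}{2}+\eta)$, the exponent $\frac{k-\eta}{k+1}$ increases with $k$. At $k=n(\frac{1}{2}+\eta)$ it is nondecreasing in $\eta$. So its infimum is $\frac{n}{n+2}$, approached as $\eta\to 0$ and $k\to\frac{n}{2}$. Since $n(n+1)>(n-1)(n+2)$, this is strictly worse than $\frac{n-1}{n+1}$. The discriminant also enters as $\Delta_K^{1/(n+2)}\kappa_K^{n/(n+2)}$, not as $\Delta_K^{1/(n+1)}$.

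Other lines or other smoothings do not help: with only convexity as input, $\operatorname{Re}s=0$ is already optimal. For $n=2$ this is the familiar gap between $t^{1/2}$ and $t^{1/3}$ in the circle problem. So the claim in Step 4 that ``Landau's choice of parameters'' yields $t^{\frac{n-1}{n+1}}\Delta_K^{\frac{1}{n+1}}$ is false for the argument you set up. What you actually get is an incomparable bound that does not imply the statement for large $t$.

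What is missing is the oscillation on the dual side, which is the heart of Landau's argument. Instead of bounding $\zeta_K$ on the left line in absolute value, move far enough left that $\zeta_K(1-s)=\sum_{\mathfrak m}N(\mathfrak m)^{s-1}$ converges absolutely. Then write the remainder of the order-$k$ Riesz mean as a series over ideals $\mathfrak m$ of the integrals
\[
F_k(y)=\frac{1}{2\pi i}\int_{(c)}\frac{\gamma_K(1-s)}{\gamma_K(s)}\,\frac{y^{s}}{s(s+1)\cdots(s+k)}\,\mathrm{d}s,\qquad y=\frac{t\,N(\mathfrak m)}{\Delta_K},
\]
where $\gamma_K$ is the gamma factor of $\zeta_K$, including the powers of $\pi$. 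These are Bessel-type functions. A saddle-point analysis via Stirling, or Landau's classical estimates for such integrals, gives $F_k(y)\ll y^{\frac{n-1-2k}{2n}}$ for large $y$. This saves a factor $y^{\frac{1}{2n}}$ over what integrating absolute values gives, and that cruder bound is all your convexity estimate can deliver. As a result the dual series already converges for $k>\frac{n-1}{2}$ instead of $k>\frac{n}{2}$.

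The argument then proceeds as follows:
\begin{enumerate}
\item Take the $k$-th difference with step $h$.
\item Split the dual sum into $N(\mathfrak m)\le Y$ and $N(\mathfrak m)>Y$.
\item Bound the small ideals by $h^k$ times the $k$-th derivative, which replaces $F_k$ by $F_0\ll y^{\frac{n-1}{2n}}$.
\item Bound the large ideals by the trivial $2^k$ estimate.
\item Optimise over $h$ and $Y$.
\end{enumerate}
This leads to $h\asymp t^{\frac{n-1}{n+1}}\Delta_K^{\frac{1}{n+1}}$, up to $n$-dependent and logarithmic factors. The hard explicit work is therefore not the convexity bound you single out as the main obstacle. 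It is the uniform explicit estimates for these oscillatory integrals and for the partial sums over $\mathfrak m$.
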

    For the explicit description of the function $\Lambda_K(n)$, we refer to the article.

    It is clear that the error term grows slower in $t$, so for very large values of $t$ the estimate of Lee will certainly outperform our result. However, for values of $t$ close to our lower bound, $\left(10 n^2 \sqrt{\Delta_K} \right)^n$, we still think, it is interesting to see how the two very different approaches of giving explicit estimates for $\sigma_K(t)$ compare. We do this by giving an example for a cyclotomic extension.
    
    \begin{example}\label{Ex:zeta}
        For $K=\Q(\zeta_{11})$ of degree $n=10$, we can find all relevant constants on the LMFDB \cite[\href{https://www.lmfdb.org/NumberField/10.0.2357947691.1}{Number field 10.0.2357947691.1}]{lmfdb}.

        In \cite{lee_number_2023}, we can luckily find $\Lambda_K(10)=9.65555 \cdot 10^{26}$ in Table 1. For $t_0=\left(10 n^2 \sqrt{\Delta_K} \right)^n \approx 7.29 \cdot 10^{76}$, Theorem \ref{T:Lee} gives
        $$
        \vert \sigma_K(t_0) -\kappa_K\,t_0 \vert \leq  5.39\cdot 10^{102}.
        $$
        Since $h_K=1$, i.e., we only have one ideal class, and $R_K=26.1711060094$ is rather small, the best choice from among our bounds is \eqref{E:a1}.
        With this, we obtain
        $$
        \vert \sigma_K(t_0) -\kappa_K\,t_0 \vert \leq 7.6 \cdot 10^{86}.
        $$
    \end{example}
	
	\bibliographystyle{alpha}
	\bibliography{biblio.bib}
	
\end{document}